\documentclass[11pt, reqno]{amsart}
\usepackage{amsmath,amssymb,amsfonts,amstext, amsthm, amscd}
\usepackage{hyperref}
\usepackage{verbatim}
\usepackage{graphicx}
\usepackage{color}
\usepackage{enumerate}
\usepackage{epstopdf}


\newtheorem{theorem}{Theorem}[section]
\newtheorem{lemma}[theorem]{Lemma}
\newtheorem{proposition}[theorem]{Proposition}
\newtheorem{corollary}[theorem]{Corollary}

\newtheorem*{theorem-nonum}{Theorem 3}

\theoremstyle{definition}
\newtheorem*{definition*}{Definition}
\newtheorem{remark}[theorem]{Remark}
\newtheorem{example}[theorem]{Example}

\newcommand\ns[1]{ \left\{ {#1} \right\} }

\newcommand{\Z}{{\mathbb Z}}
\newcommand{\R}{{\mathbb R}}
\newcommand{\N}{{\mathbb N}}

\newcommand{\e}{\varepsilon}

\newcommand{\F}{\mathcal{F}}

\newcommand{\borel}{\mathcal{B}}

\newcommand\m{{\mu}}

\newcommand\B{{\mathcal B}}        

\newcommand\fracc[2]{ #1 /#2 }      
\newcommand\X{\Omega}

\newcommand\garbage[1]{}

\newcommand\kmark{{k_{\mathrm{mark}}}}

\renewcommand{\P}{{\mathbb P}}

\newcommand{\dff}[1]{\textbf{\emph{#1}}}

\newcommand{\ronn}{|}

\newcommand{\erk}{\hfill \ensuremath{\Diamond}} 

\DeclareMathOperator{\orb}{orb}

\usepackage{bbm}


\newcommand{\PrA}{\mathrm{Prob}(A)}

\newcommand{\inprod}{\otimes}
\newcommand{\inseq}{\oplus}
\newcommand{\emp}{\mathrm{emp}}
\newcommand{\Ty}{\mathrm{type}}

\newcommand{\dTV}{\mathrm{d}_\mathrm{TV}}
\newcommand{\DKL}{\mathrm{D}_\mathrm{KL}}

\newcommand{\Prob}{\mathrm{Prob}}

\newcommand{\rmu}{\rho}

\newcommand{\vol}{{\mathrm{vol}}}

\newcommand{\Diff}{{\mathrm{Diff}}}

\newcommand{\Ad}{\mathcal{A}}

\newcommand{\Zg}{\Gamma}

\newcommand{\Leb}{\mathrm{Leb}}

\newcommand{\id}{\mathrm{id}}

\newcommand{\pij}{\mathrm{proj}}

\newcommand{\muphys}{\mu_{\mathrm{ph}}}

\newcommand{\hata}{\hat{g}}

\begin{document}
	\author{Zemer Kosloff and Terry Soo}
	\title[Factors of nonsingular systems]{Sinai factors of nonsingular systems: Bernoulli shifts and Anosov flows}

	\address{Einstein Institute of Mathematics, 
		Hebrew University of Jerusalem, 
		 Edmund J. Safra Campus, Givat Ram. Jerusalem, 9190401, Israel.}
	\email{zemer.kosloff@mail.huji.ac.il}
	\urladdr{http://math.huji.ac.il/~zemkos/}
	\thanks{ZK is partially supported by Israel Science Foundation grant 1180/22.}
	\thanks{The version of record is published in the \emph{Journal of Modern Dynamics}, 20:597-634, 2024; DOI:10.3934/jmd.2024016}
	
	\address{Department of Statistical Science \\
		University College London. 
		}
	\email{t.soo@ucl.ac.uk; math@terrysoo.com}
	\urladdr{http://www.terrysoo.com}
	
		\keywords{Bernoulli shifts, finitary factors, nonsingular dynamics, dissipative systems, Anosov diffeomorphisms, entropy}
		\subjclass[2020]{37A40, 37A35, 37C05, 60G07, 94A24}
	
\maketitle

\begin{abstract}
We show that a totally dissipative system has all nonsingular systems as factors, but  that this is no longer true when the factor maps are required to be finitary.  In particular, if a   nonsingular  Bernoulli shift  has a limiting marginal distribution $p$, then it cannot have,  as a finitary factor, an  independent and identically distributed (iid) system of entropy larger than $H(p)$; on the other hand, we show that  
iid systems with entropy strictly lower than $H(p)$ can be obtained as finitary factors of these Bernoulli shifts, extending Keane and Smorodinsky's  finitary version of Sinai's factor theorem to the nonsingular setting.  As a consequence of our results we also obtain that every transitive twice continuously differentiable Anosov diffeomorphism on a compact manifold, endowed with volume measure, has iid factors, and if the factor is required to be finitary, then  the iid factor cannot have Kolmogorov-Sinai entropy greater than the  measure-theoretic entropy of a Sinai-Ruelle-Bowen measure associated  with the Anosov diffeomorphism.
	\end{abstract}

\section{Introduction}

Let $(\Omega, \F, \mu)$ be a $\sigma$-finite measure space.  We will often be concerned with the case where $\mu$ is a probability measure, so that $\mu(\Omega)=1$.     Let $T : \Omega \to \Omega$.  The map $T$ is \dff{ergodic} if $\mu( E \triangle T^{-1}(E) )=0$ implies that $0 \in \ns{\mu(E), \mu(\Omega / E)}$.   We say that $T$ is \dff{nonsingular} if $\mu \circ T^{-1} \sim \mu$, that is, the measures have the same null sets; in the case that $\mu\circ T^{-1} =\mu$ we say that $T$ is \dff{measure-preserving}.    We refer to $(\Omega, \F, \mu, T)$ as a  \dff{nonsingular dynamical system}.   In the case where $T$ is ergodic and  probability-preserving,  Kolmogorov-Sinai entropy \cite{MR3930576,MR0103255, MR0103256}, a single nonnegative real number, is assigned  to the dynamical system and measures the amount of randomness contained in the system, all of which can be accounted for by a Bernoulli subsystem in the following way.

Let $A$ be a finite set of symbols,  $(\rho_i)_{i \in \Z}$ be a sequence of probability measures on $A$, and $\nu= \bigotimes_{i\in \Z} \rho_i$ be the product measure on the sequence space
 $A^{\mathbb{Z}}$ 
 endowed with the usual product topology and the Borel product sigma-algebra   $\borel= \borel(A^\Z)$.    When the product probability space $(A^{\Z}, \borel, \nu)$  is endowed with the  \dff{left-shift}  $S:A^{\Z} \to A^{\Z}$  given by $(Sx)_i = x_{i+1}$, we refer to the dynamical system as a   \dff{Bernoulli shift}; if all the measures $\rho_i \equiv  {p}$ are identical, then we say that system is  \dff{independent and identically distributed (iid)}; sometimes we will also refer to iid systems as \dff{stationary} Bernoulli shifts.    Thus in the nonsingular setting, a Bernoulli shift is a sequence of independent, but not necessarily identical, random variables.  Sometimes we will simply refer to  a system by its corresponding measure or endowed mapping.  
The Kolmogorov-Sinai entropy of iid systems is given by the usual Shannon entropy \cite{MR26286}:   $H({p}):=-\sum_{a \in A} {p}(a) \log {p}(a)$.    We say that a nonsingular system
$(\Omega, \mathcal{F}, \mu, T)$ 
 has an \dff{iid factor} of entropy $h'$ if
  there exists a \dff{factor map} $\phi: \Omega \to A^{\Z}$  such that the mapping is equivariant,  $\phi \circ T = S \circ \phi$, and $\mu \circ \phi^{-1} \sim q^{\Z}$, where $q$ is a probability measure on $A$ with $H(q) = h'$; we emphasize that in the nonsingular case, we allow the possibility that  the push-forward of $\mu$ under $\phi$ is \emph{not} exactly $q^{\Z}$, only that it is a measure that is equivalent to $q^{\Z}$.    The  Sinai factor theorem \cite{Sinai} gives that an ergodic probability-preserving system with entropy $h$ has all  iid factors of entropies less than or equal to $h$.

A common notion of \emph{chaos} for nonsingular dynamical systems is that the system can be used to simulate a bi-infinite collection of iid coin tosses and the Sinai factor theorem shows that the chaotic probability-preserving systems are precisely the systems with positive Kolmogorov-Sinai entropy \cite{MR1023980}. 
In the absence of a similarly robust  notion of entropy and entropy theory, it is unclear which nonsingular systems are chaotic and how to define 
entropy.  
We show that all totally dissipative systems are chaotic and that the analogue of Keane and Smorodinsky's  finitary version of Sinai's factor theorem \cite{keanea} remains true for a large class of nonsingular Bernoulli shifts.

\subsection{Dissipative systems are universal} 
Recently, we showed that a large class  of nonsingular Bernoulli shifts have stationary Bernoulli shift factors \cite{KosSoo20, KoSooFactors}.   A curious aspect of our results is that it also applies to dissipative Bernoulli shifts, and it turns out there was a deeper reason why we never had to assume conservativity.   We say that a nonsingular system is \dff{factor-universal} if it has every nonsingular system as a factor.   Recall that a nonsingular system is dissipative if it has a wandering set of positive measure, is conservative if there are no wandering sets of positive measure,  and is totally dissipative if the measurable union of the wandering sets is the whole space; see  Section \ref{diss-section} for more precise definitions.  
\begin{theorem}
\label{thm: dissipative factors}
A totally dissipative system on a non-atomic measure space is factor-universal.
\end{theorem}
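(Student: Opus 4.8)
The plan is to exploit the fact that a totally dissipative system carries essentially no structure beyond a free $\Z$-action on a standard base, so that an equivariant map onto an \emph{arbitrary} target can be built by prescribing an essentially arbitrary measurable map on a single fundamental domain. (Throughout, $\psi_{*}\nu := \nu\circ\psi^{-1}$ denotes push-forward, and $\nu_{1}\sim\nu_{2}$ means the two measures have the same null sets.) First I would invoke the Hopf decomposition (developed in Section~\ref{diss-section}): since $(\Omega,\F,\mu,T)$ is totally dissipative and invertible, there is a wandering set $W\in\F$ with $\bigsqcup_{n\in\Z}T^{n}W=\Omega$ modulo $\mu$-null sets. Then $\Theta\colon\Omega\to W\times\Z$, $\Theta(T^{n}w)=(w,n)$, is a bimeasurable bijection (mod null) conjugating $T$ to the shift $\sigma(w,n)=(w,n+1)$, and $\Theta_{*}\mu=\sum_{n\in\Z}\mu_{n}\otimes\delta_{n}$ with each $\mu_{n}$ a $\sigma$-finite measure on $W$. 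Since $\mu$ is non-atomic, every $\mu_{n}$ is non-atomic; since $T$ is nonsingular, comparing $\Theta_{*}\mu$ with $\Theta_{*}\mu\circ\sigma^{-1}$ gives $\mu_{n}\sim\mu_{n+1}$ for all $n$, so the $\mu_{n}$ are mutually equivalent. Fix $\mu_{0}$ as reference (nonzero as soon as $\mu\neq 0$, which we assume) and let $g_{n}:=d\mu_{n}/d\mu_{0}>0$ hold $\mu_{0}$-a.e.

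Next I would reduce the statement to the construction of a single measurable map on $W$. Let $(\Omega',\F',\mu',T')$ be any nonsingular system and let $\psi\colon W\to\Omega'$ be measurable; define $\phi\colon\Omega\to\Omega'$ by $\phi(\Theta^{-1}(w,n)):=(T')^{n}\psi(w)$. This $\phi$ is measurable (as $T'$ is bimeasurable) and equivariant by construction, and unwinding definitions gives
\[
\mu\circ\phi^{-1}\;=\;\sum_{n\in\Z}\big(\psi_{*}\mu_{n}\big)\circ(T')^{-n}.
\]
It therefore suffices to choose $\psi$ with $\psi_{*}\mu_{0}\sim\mu'$: then, since $g_{n}>0$ a.e., a set is $\psi_{*}\mu_{n}$-null iff it is $\psi_{*}\mu_{0}$-null, so $\psi_{*}\mu_{n}\sim\mu'$ for every $n$; since $T'$ is nonsingular and invertible, $\mu'((T')^{-n}N)=0$ iff $\mu'(N)=0$, so each summand $(\psi_{*}\mu_{n})\circ(T')^{-n}$ is again equivalent to $\mu'$; and a countable sum of measures each equivalent to $\mu'$ is equivalent to $\mu'$, because a set annihilates the sum exactly when it annihilates every summand. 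Hence $\mu\circ\phi^{-1}\sim\mu'$ and $\phi$ is a factor map. (When $\mu(\Omega)<\infty$ one gets moreover a finite push-forward, since $\sum_{n}\psi_{*}\mu_{n}(\Omega')=\sum_{n}\mu_{n}(W)=\mu(\Omega)$.)

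Finally I would produce such a $\psi$, and this is the only place non-atomicity is used. Pass to equivalent probability measures: a non-atomic probability measure $\rho\sim\mu_{0}$ on $W$ and a probability measure $\rho'\sim\mu'$ on $\Omega'$. A non-atomic standard probability space is isomorphic mod null to $([0,1],\lambda)$, and every standard probability space is the push-forward of $\lambda$ under some measurable $h\colon[0,1]\to\Omega'$; taking $\psi:=h\circ\iota$ with $\iota\colon W\to[0,1]$ the mod-null isomorphism gives $\psi_{*}\rho=\rho'$, hence $\psi_{*}\mu_{0}\sim\psi_{*}\rho=\rho'\sim\mu'$ (once more, multiplying by an a.e.-positive density leaves the null class of a push-forward unchanged). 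This completes the construction.

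I do not expect a genuine obstacle: the content of the theorem is the structural observation that total dissipativity provides enough room --- a free $\Z$-action on a standard base --- to map equivariantly onto any target, after which everything is soft measure theory. The only points requiring care are that push-forward and countable summation both preserve the null-equivalence class (so the infinite sum over $n$ does no damage), and that the notion of factor in force here asks only that $\mu\circ\phi^{-1}$ and $\mu'$ have the same null sets --- which is precisely what the construction delivers.
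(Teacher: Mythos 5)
Your proof is correct, but it takes a genuinely different route from the paper's. The paper's proof of Theorem~\ref{Dissipative actions} is a two-line \emph{product trick}: form the direct product of the given totally dissipative system with the arbitrary target $(\X_2,\mu_2,S)$; observe that the product is again totally dissipative (a wandering set for the first coordinate crossed with $\X_2$ is wandering for the product and sweeps out everything); apply Proposition~\ref{Hopf} to conclude the product is isomorphic to the original, since by Hopf \emph{any} two totally dissipative actions on non-atomic spaces are isomorphic (both being isomorphic to the translation action on $[0,1]\times G$); and finally compose that isomorphism with the projection to the second coordinate. Your argument instead works directly with the Hopf decomposition: you conjugate to the shift on $W\times\Z$, reduce the problem to producing a single measurable map $\psi\colon W\to\Omega'$ with $\psi_{*}\mu_0 \sim \mu'$, and build that map by passing to equivalent probability measures and invoking the standard Borel isomorphism / push-forward facts. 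In effect you reprove the content of Proposition~\ref{Hopf} inside your argument and then construct the factor map explicitly on a fundamental domain, rather than invoking the classification twice and projecting. What the paper's approach buys is brevity and immediate generality (the same three lines work verbatim for arbitrary countable group actions and, with Krengel--Rosinsky replacing Hopf, for $\R^d$-flows, as in Theorems~\ref{Dissipative actions} and~\ref{Dissipative flows}); what yours buys is a completely explicit factor map and a transparent accounting of exactly where non-atomicity enters (the existence of $\psi$). Your bookkeeping is all sound: the equivalences $\mu_n\sim\mu_{n+1}$ follow from nonsingularity, push-forward and countable summation both preserve the null class, and the notion of factor in force requires only equivalence of $\mu\circ\phi^{-1}$ with $\mu'$, which your construction delivers.
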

We prove  Theorem \ref{thm: dissipative factors} in the slightly more general setting of a countable group action; see Theorem \ref{Dissipative actions}.    We will also consider a version of
Theorem \ref{thm: dissipative factors} 
 in the setting of a flow in  Theorem \ref{Dissipative flows}; the case of flows is more subtle, and it is not true that every totally dissipative flow is factor-universal.  

Theorem \ref{thm: dissipative factors}  may come as a surprise, 
since a totally dissipative transformation is, by a result of Hopf, isomorphic in the nonsingular category to the simple shift  transformation $x \mapsto x+1$ equipped with Lebesgue measure on the real line, and the latter is a rather predictable  and seemingly uninteresting system; see Proposition \ref{Hopf}.   Although the proof is surprisingly simple, our result has several interesting applications in bridging several definitions of chaos in dynamical systems.

In the context of 
Ilya  Prigogine's theory of dissipative structures \cite{doi:10.1126/science.201.4358.777}, Theorem \ref{thm: dissipative factors} provides an abstract mathematical explanation of how dissipative structures are systems which can possess both chaotic behavior and elliptic islands, since under this mathematical framework  \emph{any} system is a subsystem of a dissipative system.

Theorem \ref{thm: dissipative factors} together with the works of Sinai and Livsic \cite{MR0317355} and Gurevic and Oseledec \cite{MR0320274},  implies that \emph{every} transitive $C^2$ Anosov diffeomorphism has stationary Bernoulli factors, and thus all transitive $C^2$ Anosov diffeomorphisms can be used to simulate an iid sequence, in an equivariant way. 

\begin{theorem}
\label{Anosov diffeo chaos}
	A transitive $C^2$ Anosov diffeomorphism on a compact manifold, endowed with the natural volume measure, has iid factors, which may be chosen to have infinite entropy in the case the diffeomorphism is dissipative.   
\end{theorem}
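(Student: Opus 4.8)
The plan is to obtain the statement as a consequence of Theorem~\ref{thm: dissipative factors} combined with the Hopf decomposition of the volume measure. Write $M$ for the compact manifold, $m$ for the volume measure, and $T$ for the transitive $C^2$ Anosov diffeomorphism; since $T$ is $C^2$, it is nonsingular with respect to $m$. By the structure theory for $C^2$ Anosov diffeomorphisms (Sinai and Livsic \cite{MR0317355}, Gurevic and Oseledec \cite{MR0320274}), the Hopf decomposition of $(M,\F,m,T)$ is trivial: either $(M,\F,m,T)$ is totally dissipative, or $m$ is equivalent to a $T$-invariant probability measure $\mu$. We handle the two cases separately.

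In the dissipative case, the volume measure on a manifold is non-atomic, so Theorem~\ref{thm: dissipative factors} applies and $(M,\F,m,T)$ is factor universal; in particular every iid system is a factor. Taking the alphabet to be countably infinite and $q$ a distribution on it with $H(q)=\infty$ yields an iid factor of infinite entropy, which is the last assertion of the statement.

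In the conservative case, $m\sim\mu$ with $\mu$ a $T$-invariant probability measure. Since $T$ is transitive and $\mu$ has full topological support (being equivalent to $m$), the Hopf argument gives that $(M,\borel,\mu,T)$ is ergodic; and since $T$ is a $C^2$ Anosov diffeomorphism preserving the smooth measure $\mu$, its Kolmogorov--Sinai entropy $h=h_\mu(T)$ is positive (the unstable bundle is nontrivial and uniformly expanding) and finite ($M$ is compact and $DT$ is bounded). By Sinai's factor theorem \cite{Sinai}, for every $0\le h'\le h$ there is a factor map $\phi\colon M\to A^{\Z}$, for a finite alphabet $A$, with $\mu\circ\phi^{-1}=q^{\Z}$ and $H(q)=h'$. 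Equivalent measures have equivalent push-forwards, so $m\circ\phi^{-1}\sim\mu\circ\phi^{-1}=q^{\Z}$, and therefore $\phi$ realizes an iid factor of $(M,\F,m,T)$ of entropy $h'$ in the nonsingular sense. Thus in this case $(M,\F,m,T)$ has iid factors of every entropy in $[0,h]$, and the theorem follows by combining the two cases.

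The one step that genuinely uses the nonsingular framework is the last one: a factor map that realizes the product measure $q^{\Z}$ for the \emph{invariant} measure $\mu$ automatically realizes a nonsingular iid factor for the equivalent measure $m$, because the nonsingular definition of ``iid factor'' only requires the push-forward to be equivalent to $q^{\Z}$. I do not expect a real obstacle here: the dissipative half is immediate from Theorem~\ref{thm: dissipative factors}, and in the conservative half the nontrivial facts --- the conservative/totally-dissipative dichotomy for $C^2$ Anosov maps and Sinai's factor theorem --- are quoted from the literature, so the work is in assembling them correctly and in tracking the equivalence of measures under the factor maps.
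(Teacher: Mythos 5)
Your proposal is correct and follows the same route as the paper: the dichotomy between the existence of a volume a.c.i.p.\ (with Sinai--Livsic giving an SRB measure of positive entropy, then Sinai's factor theorem) and the totally dissipative case (Gurevic--Oseledec, then Theorem~\ref{thm: dissipative factors}). You spell out more explicitly than the paper how the equivalence $m\sim\mu$ lets you transfer the measure-preserving factor to a nonsingular one, and why the dissipative case yields infinite-entropy iid factors, but the structure of the argument is the same.
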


See Section \ref{AN-section} for definitions and more details; see also Section \ref{ANflow-section} and  Theorem \ref{Anosov flow chaos} for the case of an Anosov flow.     
We note that in  Theorem \ref{thm: dissipative factors} there is a key assumption that the system is totally dissipative, and that there is no regularity assumption on the associated factor mapping, other than measurabililty.    If the factor map is required to be continuous almost-surely, then we have the following upper bound on the entropy of an iid factor.    

\begin{theorem}[Upper bound for Anosov diffeomorphisms]
\label{anosov.bound}
	An iid system that is obtained as a   continuous almost-surely factor of a transitive $C^2$ Anosov diffeomorphism on a compact manifold, endowed with the natural volume measure,  will have Kolmogorov-Sinai entropy that is bounded by the measure-theoretic entropy of a Sinai-Ruelle-Bowen measure associated with the Anosov diffeomorphism.
\end{theorem}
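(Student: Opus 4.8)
The plan is to reduce the volume system $(M,f,m)$ to a symbolic model via a Markov partition, transfer the finitary iid factor to that model, and then apply the counting estimate that underlies Theorem~\ref{bound-Kalikow}. \emph{Coding.} Let $f\colon M\to M$ be a transitive $C^2$ Anosov diffeomorphism with normalized volume $m$. By the Sinai--Bowen theory of Markov partitions, $f$ admits a finite Markov partition $\mathcal R$, giving an irreducible subshift of finite type $(\Sigma,\sigma)$ over a finite alphabet together with a Hölder-continuous surjection $\pi\colon\Sigma\to M$ with $\pi\circ\sigma=f\circ\pi$, having uniformly bounded finite fibres everywhere and injective off the $f$-invariant set $N:=\bigcup_{n\in\Z}f^{n}(\partial\mathcal R)$. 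Since $\partial\mathcal R$ lies in finitely many local stable and unstable leaves, which are null submanifolds, $m(N)=0$; hence $\pi$ restricts to a Borel isomorphism $M\setminus N\to\Sigma\setminus\pi^{-1}(N)$ whose inverse (the itinerary map) is continuous on $M\setminus N$. Put $\hat\mu:=(\pi^{-1})_{*}m$; then $\pi_{*}\hat\mu=m$, and, because $m$ is quasi-invariant under the diffeomorphism $f$ (Radon--Nikodym derivative the Jacobian) and $\pi$ intertwines $\sigma$ with $f$ off the null set $N$, the shift $\sigma$ is nonsingular with respect to $\hat\mu$.

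\emph{Transfer of finitary iid factors.} Let $\psi\colon M\to B^{\Z}$ be a finitary iid factor of $(M,f,m)$ with $\psi_{*}m\sim q^{\Z}$ and $H(q)=h'$; say $\psi$ is continuous off the $m$-null set $D$. Then $\psi\circ\pi\colon\Sigma\to B^{\Z}$ is Borel, shift-equivariant, $(\psi\circ\pi)_{*}\hat\mu=\psi_{*}m\sim q^{\Z}$, and it is continuous on $\Sigma\setminus\pi^{-1}(D)$, a set of full $\hat\mu$-measure since $\hat\mu(\pi^{-1}(D))=(\pi_{*}\hat\mu)(D)=m(D)=0$. Thus $\psi\circ\pi$ is a finitary iid factor of the nonsingular system $(\Sigma,\sigma,\hat\mu)$ of the same entropy $h'$, and it suffices to bound the entropy of finitary iid factors of $(\Sigma,\sigma,\hat\mu)$.

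\emph{Symbolic upper bound and its identification.} The proof of Theorem~\ref{bound-Kalikow} rests on a Shannon--McMillan--Breiman-plus-counting estimate valid for an arbitrary nonsingular shift: if a factor map $\phi$ is finitary with coding radius at most $k$ off a set of measure $<\e$, then each length-$n$ cylinder in the target of positive push-forward mass is the $\phi$-image of a length-$(n+2k)$ cylinder in the source, so the number of target cylinders needed to carry $1-\e$ of the factor mass is at most the number of source cylinders needed to carry $1-\e$ of the original mass; comparing exponential growth rates as $n\to\infty$ bounds the Shannon entropy of any iid factor by the exponential ``cylinder covering rate'' of the source measure. For a Doeblin Bernoulli shift with limiting marginal $p$ this rate equals $H(p)$ (apply the strong law of large numbers to the bounded independent variables $-\log\rho_i(x_i)$, whose means converge to $H(p)$); for \emph{any} measure supported on a subshift of finite type it is at most the exponential growth rate of the number of admissible words, namely $h_{\mathrm{top}}(\Sigma,\sigma)$. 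Applying this to $(\Sigma,\sigma,\hat\mu)$ yields $h'\le h_{\mathrm{top}}(\Sigma,\sigma)$; and since $\pi$ is a finite-to-one topological factor map it preserves topological entropy, so $h_{\mathrm{top}}(\Sigma,\sigma)=h_{\mathrm{top}}(M,f)$. Combining the three steps gives $h'\le h_{\mathrm{top}}(f)$, as claimed.

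\emph{Main obstacle.} Two points need care. First, one must isolate the right general hypothesis under which the counting argument of Theorem~\ref{bound-Kalikow} runs --- a bound on the exponential growth of the number of cylinders carrying most of the measure --- so that it is simultaneously verified by Doeblin Bernoulli shifts with a limiting marginal (with the sharp constant $H(p)$) and, trivially, by every measure supported on a subshift of finite type (with constant $h_{\mathrm{top}}$); this is precisely the ``slightly more general result'' announced after Theorem~\ref{bound-Kalikow}, and getting its formulation clean is the crux. Second, one must check that the finitary property genuinely survives the Markov coding, which hinges on the itinerary map $\pi^{-1}$ being continuous $m$-almost everywhere and on the discarded grid $N=\bigcup_{n}f^{n}(\partial\mathcal R)$ being $m$-null --- a property of the volume measure specifically, and the reason the hypothesis ``endowed with the natural volume measure'' is used here.
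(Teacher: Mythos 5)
Your overall route---code $(M,f,\vol_M)$ by a Markov partition, transfer the finitary factor to the subshift, and invoke the symbolic entropy bound---is exactly the paper's. But two steps you treat as routine are precisely where the work lies, and as written both are gaps. First, your justification that $m(N)=0$ because ``$\partial\mathcal R$ lies in finitely many local stable and unstable leaves, which are null submanifolds'' is not valid in general: by Bowen's theorem, Markov partition boundaries for Anosov diffeomorphisms in dimension $\geq 3$ need not be smooth, and are not contained in finitely many leaves. The paper's Lemma \ref{claim} proves the nullity differently: the boundary grid $C$ is a closed $f$-invariant set with $\mu_f(C)=0$ (ergodicity plus the fact that the SRB measure charges every open set), and since $\vol_M$-a.e.\ point is $\mu_f$-generic, the portmanteau theorem applied to the closed set $C$ forces $\mathbf{1}_C(y)=\limsup_n\nu_n^y(C)\leq\mu_f(C)=0$ for a.e.\ $y$. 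The paper explicitly flags that the absence of a volume a.c.i.p.\ is what makes this step nontrivial.

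Second, your description of the counting argument behind Theorem \ref{bound-Kalikow} is too coarse to close. A finitary map has almost surely finite, not bounded, coding radius, so length-$n$ target cylinders are \emph{not} images of length-$(n+2k)$ source cylinders; one only gets a finite-window block code $\alpha$ that agrees with the true factor $\pi$ off a set $C_\emptyset$ of small measure. To conclude, one must show that along a typical orbit the \emph{fraction} of coordinates landing in $C_\emptyset$ is small --- i.e.\ that $\frac{1}{n}\sum_j\mathbf{1}_{C_\emptyset}\circ T^j$ concentrates --- and then count Hamming balls around the $\alpha$-images (Lemma \ref{lem: Stirling}). In the nonsingular setting this concentration is not automatic; it is the hypothesis (WL) of Theorem \ref{Kalikow general}, requiring cylinder averages to converge in measure to a continuous limit $\nu$. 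For the Anosov application this is supplied by the SRB measure again, via Lemma \ref{prop: SRB-SFT} (with $\nu=\mu_f\circ\pi$, non-atomic). Your ``main obstacle'' paragraph identifies only the equipartition/counting hypothesis (AEP) as what must be abstracted; the weak-law hypothesis is the second, independent ingredient, and it is the reason the SRB measure enters the proof twice. Without it the argument does not go through.
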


See Section \ref{section:anosov-bound} for details.  In this context, a factor map  that is continuous almost-surely is sometimes  referred to as being  \dff{finitary}.    In some special cases, such as the measure-preserving case of a hyperbolic toral automorphism \cite{MR525312, keanec}, we know that the upper bound in  Theorem \ref{anosov.bound} is achieved; these results rely on symbolic dynamics and  the finitary constructions of  Keane and Smorodinsky \cite{keanea, keaneb} for stationary Bernoulli shifts, which we will extend to the nonsingular setting.

\subsection{Finitary factors of nonsingular Bernoulli shifts}

      When we turn our attention to the case of nonsingular Bernoulli shifts, and consider factor maps that are finitary \cite{MR2306207}, so that they are continuous almost surely, we have results that are consistent with those of entropy theory and Keane and Smorodinsky \cite{keanea}.  We note that there are both dissipative and conservative nonsingular Bernoulli shifts that do not have an absolutely continuous invariant probability measure, with the first examples of the latter given by Krengel \cite{MR0269808} and Hamachi \cite{MR662470}.  It follows from Theorem \ref{thm: dissipative factors} that many Bernoulli shifts have infinite entropy iid factors.  Specifically,   all dissipative nonsingular Bernoulli shifts are totally dissipative and thus shift universal; see also Example \ref{example-with-three} and Proposition \ref{example-no}.
         In what follows, we address together, dissipative and conservative nonsingular Bernoulli shifts, with the added finitary regularity assumption on the factor mapping.

 A product measure $\rho = \bigotimes_{i \in \Z} \rho_i$ on $A^\mathbb{Z}$ satisfies the \dff{Doeblin condition} if there exists $\delta>0$ such that for all $a\in A$ and $i\in\mathbb{Z}$, we have $\rho_i(a)>\delta$.       We say that the \dff{limiting marginal measure} is ${p}$ if $\rho_{|i|}(a) \to {p}(a)$ for all $a \in A$.   Motivated by Sinai's original factor theorem,   we say that a family of  iid factor maps   of this system have \dff{near optimal} entropy if they can produce systems of entropy $H({p}) - \e$ for every $\e >0$.  An iid factor has \dff{optimal} entropy if it has entropy $H(p)$ and has \dff{super optimal} entropy if it has entropy greater than $H(p)$. 

 Let $A$ and $B$ be finite sets.  Consider a factor map $\phi:A^{\Z} \to B^{\Z}$, where $A^{\Z}$ is equipped with a nonsingular measure $\mu$.  Consider the zeroth coordinate projection $\bar{\phi}$ given by $\bar{\phi}(x) = \phi(x)(0)$.    We say that $\phi$ is \dff{finitary} if for every $b \in B$ there exists $C_b$ which is a countable union of cylinder subsets of $A^{\Z}$ such that $\mu( \bar{\phi}^{-1}(b) \triangle C_b) =0$.   This condition is equivalent to the map being continuous almost surely with respect to $\mu$ and also equivalent to $\phi$ having an almost surely finite coding radius, see for example \cite[page 281]{MR1073173}.  This condition was first introduced by Weiss \cite{MR0419738}  and studied by Denker and Keane \cite{MR571401}, see also \cite{MR2306207}.

\begin{theorem}[Upper bound for finitary factors]
	\label{bound-Kalikow}
	A nonsingular Bernoulli shift that has a limiting measure,   does not have a finitary iid factor with super optimal entropy.   
\end{theorem}

We will prove Theorem \ref{bound-Kalikow} as a consequence of a  more general result, Theorem \ref{Kalikow general}, which by using symbolic dynamics \cite{MR0233038} and some finer  results regarding  Anosov diffeomorphisms \cite{MR380889},  will also allow us to infer Theorem \ref{anosov.bound}.

Turning our attention to positive results,  we recently proved the following   theorem regarding finitary factors in \cite[Theorem 1]{KoSooFactors}.

\begin{theorem}[Low entropy factors \cite{KoSooFactors}]
\label{studia}
 Every nonsingular Bernoulli shift  which  satisfies the Doeblin condition has a finitary iid factor. 
 \end{theorem}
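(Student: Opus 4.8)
The plan is to construct the factor map by a marker-and-filler scheme, adapting the method that Keane and Smorodinsky used for stationary Bernoulli shifts, with the Doeblin condition playing the role that stationarity plays there, namely supplying probabilistic estimates that are uniform in the (possibly wildly non-stationary) sequence $(\rho_i)$. Two preliminary reductions simplify the picture. First, collapsing $A$ to two nonempty classes by a fixed coordinatewise map yields a finitary factor onto $\{0,1\}^{\Z}$ whose product measure still satisfies the Doeblin condition (with the same $\delta$), so we may assume the source alphabet is $\{0,1\}$. Second, to produce an iid factor it suffices to produce a finitary factor onto $\{0,1\}^{\Z}$ whose push-forward is merely \emph{equivalent} to the fair-coin measure $\bigl(\tfrac12,\tfrac12\bigr)^{\Z}$. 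The one consequence of Doeblin used throughout is: for any finite set of coordinates and any event measurable with respect to the remaining coordinates, the conditional law of the chosen coordinates still assigns mass at least $\delta$ to each symbol at each site; thus a uniformly positive reservoir of fresh randomness is available at every coordinate, no matter how the $\rho_i$ behave.

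Next I would set up the markers. Fix a short word $w$ that cannot overlap a nontrivial shift of itself, and call a coordinate $i$ \emph{marked} if $x$ agrees with $w$ on $[i,i+\kmark)$ and no such agreement begins in a short guard zone to the left of $i$. From the Doeblin lower bound, independence, and a second-moment/Borel--Cantelli argument one gets that almost surely marked coordinates accumulate in both directions and that the gaps between consecutive marks (the \emph{blocks}) have exponentially small tails, with constants depending only on $\delta$ and $|A|$. Because $w$ is self-non-overlapping, the partition of $\Z$ into blocks is unambiguous and measurable, and the mark immediately to the left of the origin, as well as the one immediately to its right, can be identified from an almost-surely finite window of $x$; this almost-sure finiteness of the coding radius is precisely what makes the resulting map finitary.

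I would then define the filler. A block is a maximal run $[a,b)$ of coordinates strictly between two consecutive marks; conditioned on the block endpoints and on the constraint that no mark lies inside, the content $x_a,\dots,x_{b-1}$ has an explicit distribution that, by the uniform-reservoir property above, carries at least $c\,(b-a)$ bits of extractable entropy for some $c=c(\delta,|A|,\kmark)>0$. Inside each block I would apply a finite combinatorial code turning its content into a word of fair, mutually independent bits whose length is a deterministic function of $b-a$; concatenating these words along $\Z$, and using a further small piece of the block randomness to pin down where the origin falls in this concatenation, produces the output point in $\{0,1\}^{\Z}$. Since the per-block outputs are exactly fair and independent across blocks, the push-forward is comparable to, and after the alignment bookkeeping equivalent to, the fair-coin measure, giving the desired finitary iid factor.

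The crux, and the step I expect to be hardest, is the last one: making the \emph{concatenation} of filler words coming from blocks of \emph{random} length into a genuinely iid (or iid-equivalent) bi-infinite sequence, rather than merely a chain of independent but inhomogeneous pieces. This is exactly the delicacy in the Keane--Smorodinsky argument, and it is met by the standard device of not filling greedily but reserving part of each block's randomness to randomize the filling rule itself, so that the law of the output is insensitive to the block lengths; the real work is checking that the Doeblin estimates are strong enough for this device to run uniformly in $(\rho_i)$. The remaining ingredients --- existence of a suitable self-non-overlapping marker word with positive density, exponential block tails, and finiteness of the coding radius --- are routine consequences of the Doeblin bound together with Borel--Cantelli and large-deviation estimates for sums of independent, non-identically distributed variables.
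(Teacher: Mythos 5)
There is a genuine gap, and it sits exactly where you placed your confidence rather than where you placed your doubt. You assert that inside each block a ``finite combinatorial code'' turns the content into ``a word of fair, mutually independent bits whose length is a deterministic function of $b-a$,'' and later that ``the per-block outputs are exactly fair.'' This is impossible in the nonsingular setting: the conditional law of a block's content depends on the absolute position of the block in $\Z$ (the marginals $\rho_i$ vary with $i$), and a fixed deterministic map with deterministic output length cannot push a generic non-uniform distribution on a finite set onto an exactly uniform one. Even the simplest instance fails: von Neumann's trick applied to the pair $(x_{2i},x_{2i+1})$ gives $\mathrm{Prob}(01)=\rho_{2i}(0)\rho_{2i+1}(1)\neq \rho_{2i}(1)\rho_{2i+1}(0)=\mathrm{Prob}(10)$ as soon as $\rho_{2i}\neq\rho_{2i+1}$. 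The route taken in the cited proof (and in this paper's Section 3, which strengthens the theorem) is to give up exact fairness and show instead that the deviations from uniformity are \emph{square-summable over positions}, so that Kakutani's dichotomy certifies the assembled output as merely \emph{equivalent} to a fair iid measure; this works because the extraction map is chosen uniform on type classes, where the corresponding iid product would be exactly uniform and nonsingularity of the shift supplies the summability (Lemma \ref{lem: Kakuuu}, Proposition \ref{prop: Kakuu}, Proposition \ref{KE-extract}). You state the relaxation to equivalence in your preliminaries but never use it: without a summability estimate, Kakutani's dichotomy says the infinite concatenation of ``approximately fair'' block outputs may well be \emph{singular} with respect to the fair-coin measure, in which case you have no factor at all.

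The second gap is the one you flagged yourself: aligning a concatenation of variable-length filler outputs with $\Z$ equivariantly. Since the per-symbol entropy of the source can be far below one bit, each block's output word must in general be shorter than the block, so some mechanism must assign an output symbol to \emph{every} integer in an equivariant way. ``Reserving part of each block's randomness to randomize the filling rule'' is not a device that accomplishes this, and it is not what Keane and Smorodinsky do (their fillers feed a marriage-lemma assignment between two marker processes; they are not aligning an output tape with $\Z$). The paper sidesteps the problem by using Ball's alternating intervals to force fixed-length blocks and then a Me{\v{s}}alkin-type matching (Lemma \ref{match}) to ship surplus output symbols to the uncovered positions. As written, your construction does not yield a well-defined equivariant map, and the step you defer as ``the real work'' is essentially the theorem.
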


In Theorem \ref{studia}, it is obvious from our construction that the finitary factor is non-optimal with respect to entropy. Keane and Smorodinsky \cite{keanea, keaneb} defined finitary factor isomorphisms between any two measure-preserving Bernoulli shifts of finite entropy using a \emph{marker-filler} method; our proof of Theorem  \ref{near} will also make use of these ideas adapted to the nonsingular setting.    

\begin{theorem}[Near optimal entropy factors]
\label{near}
If a nonsingular Bernoulli shift  satisfies the Doeblin condition, and has a limiting measure, then it  has near optimal entropy finitary iid factors. 
\end{theorem}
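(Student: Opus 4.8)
The plan is to adapt the Keane--Smorodinsky marker--filler method \cite{keanea} to the nonsingular setting, as in our proof of Theorem~\ref{studia}, but now designing the filler so that it extracts essentially \emph{all} of the randomness available in each block rather than a bounded amount. Fix $\varepsilon>0$, write $\mu=\bigotimes_i\rho_i$ with $\rho_{|i|}(a)\to p(a)$ and $\rho_i(a)>\delta$ for all $i$ and $a$, and choose a finite alphabet $C$ together with a probability measure $q$ on $C$ with $H(q)=H(p)-\varepsilon$ (such $q$ exists once $\log|C|>H(p)$). We treat the conservative and dissipative cases uniformly.

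\emph{The marker step.} Build a finitary, equivariant skeleton partitioning $\Z$ into consecutive finite blocks, exactly as in \cite{keanea,KoSooFactors}: fix a Keane--Smorodinsky marker word $w\in A^{k}$ and call $j$ a marker site when $w$ occurs at $j$ subject to the usual overlap conditions. Because $\mu$ satisfies the Doeblin condition, a marker occurs at any prescribed coordinate with probability at least $\delta^{k}$, \emph{uniformly} in the coordinate, so a Borel--Cantelli/renewal argument gives that almost surely every coordinate lies in a unique block, the block of $0$ is almost surely finite, and block lengths have a uniform exponential tail. The block containing a coordinate is determined almost surely by a finite window, so anything built on the skeleton is finitary.

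\emph{The filler step.} Conditionally on the skeleton, the source restricted to a block occupying positions $I$ is a product measure, with Shannon entropy $h(I)$ say; by the Doeblin condition $h(I)/|I|$ is bounded below by a positive constant uniformly, and, crucially, since $\rho_{|i|}\to p$ there is an $N$ such that every block $I$ disjoint from $[-N,N]$ satisfies $h(I)/|I|\ge H(p)-\varepsilon/2$, strictly above the target rate $H(q)$. One then runs the Keane--Smorodinsky filler along the sequence of blocks, which determines the output on a block from the contents of that block and a bounded number of neighbouring blocks, and which is arranged so that, away from the central region, the emitted symbols form an iid $q$ process; the per-symbol entropy surplus $\varepsilon/2$ on the far blocks is the buffer that makes the combinatorics close. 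The finitely many blocks meeting $[-N,N]$ (an almost surely finite, though unbounded, set of coordinates) are coded by a crude default rule, which does not disturb finitariness. Equivariance holds by construction, and the buffer disappears as $\varepsilon\to0$, which is why the construction stops short of optimal entropy.

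\emph{Identifying the image measure.} Let $\phi:A^{\Z}\to C^{\Z}$ be the resulting finitary equivariant map; it remains to show $\mu\circ\phi^{-1}\sim q^{\Z}$. By construction, outside the central region the emitted symbols are distributed exactly as $q^{\Z}$, so $\mu\circ\phi^{-1}$ and $q^{\Z}$ have the same conditional law given any sufficiently distant collection of coordinates, with the discrepancy confined to an almost surely finite window; promoting this to genuine mutual absolute continuity is the main obstacle. I would estimate the finite-window likelihood ratios $L_n(y)=\frac{d(\mu\circ\phi^{-1})|_{\sigma(y_{-n},\dots,y_n)}}{dq^{\Z}|_{\sigma(y_{-n},\dots,y_n)}}(y)$ and show that $(L_n)$ is a uniformly integrable martingale under $q^{\Z}$ whose limit is almost surely strictly positive; by the standard dichotomy for measures on a product space this gives $\mu\circ\phi^{-1}\sim q^{\Z}$, and since $H(q)=H(p)-\varepsilon$ with $\varepsilon>0$ arbitrary this proves the theorem. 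The uniform integrability is exactly where the exponential tail on block lengths from the marker step enters quantitatively; by contrast, the marker construction and the entropy bookkeeping are routine given Theorem~\ref{studia} and \cite{keanea}.
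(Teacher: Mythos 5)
Your overall marker--filler strategy matches the paper's in outline, but there are two genuine gaps, both located exactly where the nonsingular difficulty lives. First, the ``crude default rule'' applied to the finitely many blocks meeting $[-N,N]$ breaks equivariance: which blocks meet $[-N,N]$ changes when you shift the input, so a rule that treats those blocks differently cannot satisfy $\phi\circ T=S\circ\phi$. In the paper the coding rule is identical at every position; the finitely many indices where the marginals $\rho_i$ are far from $p$ enter only as finitely many terms of a convergent series in the Kakutani-equivalence estimates (Proposition \ref{KE-extract}(b)), never in the definition of the map itself.

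Second, and more seriously, the assertion that the filler can be ``arranged so that, away from the central region, the emitted symbols form an iid $q$ process'' is precisely the statement that needs proof, and your proposal supplies no mechanism for it. The Keane--Smorodinsky filler in the measure-preserving case exploits the fact that all fillers are identically distributed; here each block carries a different product law $\rho_i^{\inseq k}$, and one must show that a \emph{single} position-independent map sends all of them to something jointly equivalent to an iid sequence. The paper does this via the method of types (Propositions \ref{prop: Kakuu} and \ref{KE-extract}): within a type class the iid measure $p^{\inprod k}$ is exactly uniform, Lemma \ref{lem: Kakuuu} shows $\rho_i^{\inseq k}$ is square-summably close to $\rho_i^{\inprod k}$, and Sanov's theorem controls the mass outside the good type classes, so that the intermediate object is a genuine product measure to which Kakutani's dichotomy applies. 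Your concluding martingale argument cannot substitute for this: Kakutani's dichotomy concerns \emph{product} measures, and $\mu\circ\phi^{-1}$ is not a product measure a priori, so establishing that $(L_n)$ is a uniformly integrable martingale with almost surely positive limit is not a routine verification but the entire content of the theorem. You would also need the remaining mechanics the paper uses: Ball's alternating intervals to get fixed-length blocks with conditional independence, and a Me{\v{s}}alkin-type matching to distribute the surplus output symbols onto the coordinates not covered by good blocks.
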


By an elementary argument, we obtain the following extension, where we remove the Doeblin condition and allow for the possibility of a countable number of symbols.

\begin{corollary}
\label{countable}
If a nonsingular Bernoulli shift on a  countable 
(or finite) 
number of symbols  has a limiting measure, then it  has near optimal entropy finitary iid factors. 
\end{corollary}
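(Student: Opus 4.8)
The plan is to deduce Corollary \ref{countable} from Theorem \ref{near} by a single-site recoding that collapses all but finitely many symbols onto one fixed symbol of positive limiting mass. Write the given system as $(A^{\Z}, \rho, S)$ with $\rho = \bigotimes_{i \in \Z} \rho_i$ and limiting measure $p$. First I would record two elementary facts. \textbf{(i)} If $p(a) > 0$ then $\rho_i(a) > 0$ for every $i \in \Z$: writing $[a]_j = \{x : x_j = a\}$ we have $S^{-1}[a]_j = [a]_{j+1}$, so nonsingularity forces $\rho_{j+1}(a) = 0 \iff \rho_j(a) = 0$, whence the $\rho_j(a)$ all vanish or none do; and not all vanish because $\rho_{|i|}(a) \to p(a) > 0$. \textbf{(ii)} For any \emph{finite} $A_0 \subseteq A$, continuity of finite sums gives $\rho_i(A_0) \to p(A_0)$, hence $\rho_i(A \setminus A_0) \to p(A \setminus A_0)$.

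Now fix $\e > 0$. Choose a finite $A_0 \subseteq \{a \in A : p(a) > 0\}$ and a symbol $a_0 \in A_0$, and let $\psi : A \to A_0$ be the identity on $A_0$ and send $A \setminus A_0$ to $a_0$. The induced map $\Phi : A^{\Z} \to A_0^{\Z}$, $\Phi(x)_i = \psi(x_i)$, is continuous (hence finitary), equivariant, and carries cylinders to cylinders; it pushes $\rho$ forward to the product measure $\tilde\rho = \bigotimes_i \tilde\rho_i$ on the finite alphabet $A_0$, where $\tilde\rho_i(a) = \rho_i(a)$ for $a \neq a_0$ and $\tilde\rho_i(a_0) = \rho_i(a_0) + \rho_i(A \setminus A_0)$. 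I would then check that $(A_0^{\Z}, \tilde\rho, S)$ satisfies the hypotheses of Theorem \ref{near}. It is a nonsingular Bernoulli shift, since the pushforward of a nonsingular system under an equivariant map is nonsingular ($\tilde\rho \circ S^{-1} = (\rho \circ S^{-1}) \circ \Phi^{-1} \sim \rho \circ \Phi^{-1} = \tilde\rho$, using equivariance). By \textbf{(ii)} it has the limiting measure $\tilde p$ with $\tilde p(a) = p(a)$ for $a \ne a_0$ and $\tilde p(a_0) = p(a_0) + p(A \setminus A_0)$. And it satisfies the Doeblin condition: by \textbf{(i)} and $A_0 \subseteq \{p > 0\}$ we have $\tilde\rho_i(a) \ge \rho_i(a) > 0$ for every $i$, while $\tilde\rho_{|i|}(a) \to \tilde p(a) \ge p(a) > 0$ bounds these quantities uniformly away from $0$ for all large $|i|$, leaving only finitely many positive values over a finite alphabet to take a minimum over.

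By Theorem \ref{near}, the finite-alphabet system $(A_0^{\Z}, \tilde\rho, S)$ has near optimal entropy finitary iid factors; composing any such factor map $\Psi : A_0^{\Z} \to C^{\Z}$ with $\Phi$ yields a finitary (composition of a continuous, cylinder-preserving map with a finitary map) equivariant map $\Psi \circ \Phi : A^{\Z} \to C^{\Z}$ with $\rho \circ (\Psi \circ \Phi)^{-1} = \tilde\rho \circ \Psi^{-1} \sim q^{\Z}$, so $(A^{\Z}, \rho, S)$ has a finitary iid factor of entropy $H(\tilde p) - \e'$ for every $\e' > 0$. Finally I would compare $H(\tilde p)$ to $H(p)$: since $\tilde p$ is a deterministic coarsening of $p$, $H(\tilde p) \le H(p)$, and as $A_0 \uparrow \{a : p(a) > 0\}$ with $a_0$ fixed we have $p(A \setminus A_0) \to 0$ and hence $H(\tilde p) \to H(p)$ (reading this as $H(\tilde p) \to \infty$ when $H(p) = \infty$). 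Picking $A_0$ with $H(p) - \e/2 < H(\tilde p) \le H(p)$ and then $\e' = \e - (H(p) - H(\tilde p)) \in (0, \e]$ gives a finitary iid factor of entropy exactly $H(p) - \e$; when $H(p) = \infty$ the same scheme gives finitary iid factors of arbitrarily large finite entropy, which is the content of ``near optimal'' in that case. The argument is routine; the only subtlety --- and the reason one must collapse onto a symbol $a_0$ of positive limiting mass rather than onto a fresh symbol --- is preserving the Doeblin condition for the recoded finite-alphabet system.
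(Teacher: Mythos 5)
Your proof is correct and follows essentially the same route as the paper's: a single-coordinate (hence finitary) recoding onto a finite alphabet of symbols with positive limiting mass automatically restores the Doeblin condition, after which Theorem \ref{near} applies to the recoded system and the entropy loss $H(p)-H(\tilde p)$ is made arbitrarily small by enlarging $A_0$. The only difference is organizational: the paper performs the reduction in two stages (first collapsing the zero-limiting-mass symbols to remove the Doeblin hypothesis on a finite alphabet, then truncating $\N$ via $k\mapsto\min\{k,n\}$), whereas you combine both into one recoding.
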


In Theorem \ref{countable}, in the case that the Bernoulli shift has infinite entropy, the finitary iid factors can be taken to have arbitrarily large finite entropy; see Section \ref{section-countable} for details.

We  remark that there are many conservative Bernoulli shifts which do not have an absolutely continuous invariant measure, and understanding these shifts is an active area of research; see for example, \cite{BjoKosVaes, MR662470,KosKMaharam,DaniLem}.   Specifically, Theorem \ref{near} does not follow from  Theorem \ref{thm: dissipative factors} or  Keane and Smorodinsky's version of the Sinai factor theorem.

\begin{example}
\label{example-with-three}
We illustrate our theory with following example which was studied in \cite{VaesWahl}, where the theory provides simple examples of nonsingular conservative Bernoulli shifts that do not admit an absolutely continuous invariant measure.
Consider  $A = \ns{0,1}$, and the family of product measures $\rho^c$ with marginals
\begin{equation}
	\label{sqrt-VW}
	\rho_n^c(0) = \frac{1}{2} + \frac{c}{\sqrt{n}} \cdot \mathbf{1}[n \geq 1, c/\sqrt{n} < 1/2],
\end{equation}
where $c>0$ is a parameter. 
It is easy to  check using Kakutani's theorem (see Section \ref{earlier-dis}) that   $\rho^c$ is nonsingular and is not 
equivalent to  the product measure $(\tfrac{1}{2}, \tfrac{1}{2})^{\Z}$, but by Theorem \ref{near}, it still has a finitary  iid factor of entropy almost $H(\tfrac{1}{2},\tfrac{1}{2})=\log 2$ and  by Theorem \ref{bound-Kalikow} finitary iid factors must have entropy at most $\log 2$.  

We proved that there exists a critical  $c^{*} \in(1/6, \infty)$  such that  the shift is totally dissipative for $c> c^{*}$  and for $c  < c^{*}$ the shift is conservative \cite[Theorem 3]{KoSooFactors}.  Thus  when $c >c^{*}$, by Theorem \ref{thm: dissipative factors},  the shift has any system 
(even a circle rotation) 
as a (non-finitary) factor.  \erk
\end{example}

With regards to nonsingular Bernoulli shifts, we do not know if  finitary optimal entropy factors must exist, and 	   we cannot exclude the possibility of super optimal entropy non-finitary factors, even when the shift is conservative.

A  nonsingular dissipative Bernoulli shift that does \emph{not} satisfy the Doeblin condition and has a trivial limiting measure may have the following varied behaviour.   
\begin{proposition}
\label{example-no}
There exists a nonsingular Bernoulli shift which has all iid factors, but no finitary iid factor. 
\end{proposition}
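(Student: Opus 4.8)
The plan is to build a single nonsingular Bernoulli shift that is \emph{totally dissipative} — so that Theorem~\ref{thm: dissipative factors} immediately supplies all iid factors — but whose one‑dimensional marginals collapse onto a point mass fast enough that no \emph{finitary} factor can manufacture a positive‑entropy iid system. (The constant map is always a finitary iid factor of entropy $0$, and this is the trivial exception the statement tacitly ignores.) Concretely, fix a large constant $K$ and set $A=\{0,1\}$, $\rho_i(1)=a_i:=\min\{K/\max(|i|,1),\tfrac12\}$, $\mu=\bigotimes_{i\in\Z}\rho_i$, with $S$ the left shift. Since $a_i\to 0$, the Doeblin condition fails and the limiting marginal is the point mass at $0$ (entropy $0$); since $\sum_i a_i=\infty$ we have $\sum_i(1-\max_a\rho_i(a))=\infty$, so $\mu$ is non‑atomic. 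For total dissipativity I would use Hopf's criterion via Hellinger distances: with $\omega_n=d(\mu\circ S^{-n})/d\mu$ one has $\E_\mu[\sqrt{\omega_n}]=\prod_i\sum_a\sqrt{\rho_i(a)\rho_{i-n}(a)}\le\exp\bigl(-\tfrac12\sum_i(\sqrt{a_{i+n}}-\sqrt{a_i})^2\bigr)$, and an elementary estimate gives $\sum_i(\sqrt{a_{i+n}}-\sqrt{a_i})^2\ge c_0K\log n$ for a universal $c_0>0$ and all large $n$; taking $K>2/c_0$ makes $\sum_{n\ge1}\E_\mu[\sqrt{\omega_n}]<\infty$, hence $\sum_{n\ge1}\omega_n<\infty$ $\mu$‑a.e.\ (and symmetrically for $S^{-1}$), so $S$ is totally dissipative. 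As $\mu$ is non‑atomic, Theorem~\ref{thm: dissipative factors} then gives that $(\{0,1\}^\Z,\mu,S)$ has every nonsingular system — in particular every iid system, of arbitrary (possibly infinite) entropy — as a factor.

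For the negative half, suppose toward a contradiction that $\phi\colon\{0,1\}^\Z\to B^\Z$ is a finitary factor with $\mu\circ\phi^{-1}\sim q^\Z$ and $h:=H(q)>0$, so $q(b)<1$ for every $b\in B$. Write $\bar\phi=\phi(\cdot)(0)$, let $R(x)$ be the least $r$ such that $\bar\phi$ is $\mu$‑a.e.\ constant on the radius‑$r$ cylinder through $x$ (so $R<\infty$ $\mu$‑a.e.\ by finitariness), and put $\Sigma(x)=\{\,i:x_i\neq 0\,\}$. Because the $x_i$ are independent with $\P(x_i\neq 0)=a_i$ and $\sum_{|i|\le N}a_i\asymp\log N$, a Chernoff bound plus Borel--Cantelli give $|\Sigma(x)\cap[-N,N]|=O(\log N)$ for all large $N$, $\mu$‑a.e.; in particular $\Sigma(x)$ has density $0$. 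Now suppose first that $\bar\phi$ is $\mu$‑a.e.\ constant, with value $b^\ast$, on some all‑zero cylinder $E_M=\{\,y:y_i=0\ \text{for } |i|\le M\,\}$. Whenever $\mathrm{dist}(j,\Sigma(x))>M$ we then have $S^jx\in E_M$, $R(S^jx)\le M$, and so $\phi(x)(j)=b^\ast$; since $\bigcup_j S^{-j}(E_M\cap\{\bar\phi\neq b^\ast\})$ is $\mu$‑null (each term is null and $S$ is nonsingular), for $\mu$‑a.e.\ $x$ the set $\{\,j:\phi(x)(j)\neq b^\ast\,\}$ lies within distance $M$ of $\Sigma(x)$, hence has density $0$. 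Thus $\mu\circ\phi^{-1}$ is carried by $D:=\{\,y: b^\ast \text{ has frequency } 1 \text{ in } y\,\}$, while $q^\Z(D)=0$ by the strong law (as $q(b^\ast)<1$) — contradicting $\mu\circ\phi^{-1}\sim q^\Z$.

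The remaining, and harder, case is that $\bar\phi$ is $\mu$‑a.e.\ non‑constant on \emph{every} all‑zero cylinder. Then $R(x)=r$ forces $x|_{[-r,r]}\neq 0$, so $R(x)\ge\mathrm{dist}(0,\Sigma(x))$ and thus $R(S^jx)\ge\mathrm{dist}(j,\Sigma(x))$; as $\Sigma(x)$ is sparse, for a density‑one set of times the coding window reaches past the nearest point of $\Sigma(x)$, so that on the long $\Sigma$‑free stretches of $x$ the output $\phi(x)$ is a position‑dependent deterministic function of the surrounding sparse pattern. To turn this rigidity into a contradiction with $\mu\circ\phi^{-1}\sim q^\Z$, the natural route is a word‑complexity (entropy) estimate: any length‑$N$ word of $\phi(x)$ is obtained by applying a fixed rule to a window of $x$ whose width one hopes to bound polynomially and which, being almost all zeros, carries only $O(\log)$ defects, so that the number $P_N(\phi(x))$ of distinct length‑$N$ words occurring in $\phi(x)$ should be $e^{o(N)}$ for $\mu$‑a.e.\ $x$; whereas $q^\Z$‑a.e.\ sequence has word‑complexity rate $\ge H(q)>0$ (its orbit closure carries $q^\Z$, hence has topological entropy $\ge H(q)$), so the zero‑rate sequences form a $q^\Z$‑null set — the desired contradiction.

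The hard part will be making this last step rigorous in the second case. One must control the \emph{reach} of the coding windows: an a.e.\ polynomial bound on $\max_{0\le j<N}R(S^jx)$, equivalently a tail bound $\mu(R(S^jx)>t)\le t^{-c}$ uniform over $j<N$ once $t$ is polynomial in $N$ — and this is exactly where finitariness of $\phi$ and the concentration of the $\rho_i$ at the point mass must be played against each other, via control of the Radon--Nikodym cocycle $d(\mu\circ S^{-j})/d\mu$ on the eventually‑shrinking family of radius‑$t$ cylinders on which $\bar\phi$ is not yet resolved — and one must then verify that the resulting bound on the number of output $N$‑words is genuinely subexponential even though the input windows grow with $|j|$. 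Everything else — the construction, the invocation of Theorem~\ref{thm: dissipative factors}, and the first case — is routine.
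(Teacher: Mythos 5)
Your construction and the positive half are essentially the paper's: the paper takes $\mu_i(0)=10/\sqrt{|i|+2}$ where you take $\rho_i(1)\asymp K/|i|$, and both verify nonsingularity, non-atomicity and total dissipativity through the same Hellinger-integral criterion $\sum_n\int\sqrt{d(\mu\circ T^n)/d\mu}\,d\mu<\infty$ before invoking Theorem \ref{thm: dissipative factors}. That part is fine.

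The negative half has a genuine gap. Your Case 1 (where $\bar\phi$ is a.e.\ constant on some all-zero cylinder) is correct but is the easy case; the substance of the proposition is your Case 2, which you only sketch and explicitly flag as unfinished. Moreover, the route you propose for Case 2 --- a polynomial tail bound on $\max_{0\le j<N}R(S^jx)$, i.e.\ uniform control of $\mu(R\circ S^j>t)$ --- is not available: finitariness gives $R<\infty$ a.e.\ with no quantitative tail whatsoever, and transporting even a hypothetical tail bound across $S^j$ requires control of the Radon--Nikodym cocycle, which for this dissipative shift is exactly what degenerates. The paper avoids coding radii altogether: Lemma \ref{prop: al iz well} truncates the finitary coding to a finite union of cylinders $\{C_s\}$ plus an exceptional set $C_\emptyset$; a weak law of large numbers (Remark \ref{FFF}, with degenerate limit here) controls the \emph{frequency} of visits of the orbit to $C_\emptyset$; the asymptotic equipartition property with $h=0$ (from $\tfrac1n\sum H(\rho_i)\to0$, via Theorem \ref{SMB inhomogenous}) shows that $2^{o(n)}$ input cylinders carry almost all the mass; and a Hamming-ball count (Lemma \ref{lem: Stirling}) then bounds the number of output $n$-words well below $2^{nH(q)}$, contradicting the equipartition property for $q^\Z$. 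This is Theorem \ref{Kalikow general} applied with $h=0$, and it subsumes both of your cases at once. To complete your argument you should replace the Case-2 sketch by this finite-cylinder-approximation-plus-counting scheme rather than trying to bound the coding radius pointwise.
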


Our proof of Proposition \ref{example-no} will follow from Theorem \ref{thm: dissipative factors} and Theorem \ref{bound-Kalikow}.   See Section \ref{example-section}.

\subsection{Some remarks about random number generation and Theorem \ref{near}}

The practical problem of  random number generation has a long history \cite{8247790, vonNeumann1951} and continues to be a topic of current research.  A central goal of (true) random number generation is to generate iid fair coin tosses from an iid biased source, in an efficient way, see for example \cite{MR1150365}.   A somewhat more realistic, and less studied assumption is to assume the source is \emph{noisy}, that is  given by  independent, but not identical random variables \cite{6033845}; the goal in this setting is often to efficiently  generate  independent bits that are \emph{approximately} fair.   In the  language of random number generation,  we are interested in generating,  in an equivariant way,   from a noisy source,  a sequence that is statistically  indistinguishable from an iid sequence of near optimal entropy.  The equivariance  requirement means that if the source input is given on a ticker tape, then the same procedure is applied everywhere on the ticker tape, and positions on the ticker tape do not need to be additionally labelled. The factor map given by Theorem \ref{near} is finitary, so that our procedure, in principle, can be implemented by a  machine.

\subsection{Some tools used in our proofs} 
Our proof of Theorem \ref{thm: dissipative factors} for countable group actions is simple and given in the next section.   The corresponding version for flows is more involved, and as a result the version of Theorem \ref{Anosov diffeo chaos} for Anosov flows is a bit harder and requires  understanding of their ergodic decompositions.


In Section \ref{section-near}, our proof of Theorem \ref{near}  draws from a combination of ideas of  Keane and Smorodinsky \cite{keanea, keaneb}, Harvey, Holroyd, Peres, and Romik \cite{MR2308235},   Karen Ball \cite{ball},  some tools from information theory,  and our last paper on factors in the nonsingular setting \cite{KoSooFactors}.   
 In \cite{MR2308235}, the authors used marker-filler methods in combination of with a version of Elias' \cite{elias} construction of an unbiased (random length) iid binary sequence from a stationary source   to produce \emph{source universal} finitary factors.     Ball also employed marker-filler methods in her construction of  \emph{monotone} finitary factors; see also \cite{Qsc}.    We will prove a  disintegration of a non-stationary product measure that will have enough regularity which will allow us to recover enough stationarity.

In Section \ref{kalikow-section},  our proof of Theorem \ref{bound-Kalikow} will make use of some ideas  from Kalikow's \cite[Chapter 4.1, Theorem 365]{Kalikow} beautiful elementary proof of Kolmogorov's theorem \cite{MR0103254,MR0103255} that the three-shift  is not a factor of  the two-shift; whereas Kolmogorov's proof was an easy consequence that entropy cannot increase under factor maps, Kalikow's proof has the advantage that it  uses ``essentially nothing.''  
This proof can be modified and extended to our nonsingular setting, at the cost of restricting to finitary factors, leaving the possibility that the analogue of Kolmogorov's theorem in the nonsingular setting does not hold, even when the shift is conservative.      We will prove an abstraction of Theorem \ref{bound-Kalikow} which will also apply to other symbolic systems.  In order to obtain the bound for Anosov diffeomorphisms, we use  symbolic dynamics, and carry out entropy calculations that  simultaneously involve two measures; the first volume lemma of Bowen and Ruelle \cite{MR380889}, allows us to  reconcile the volume and its  Sinai-Ruelle-Bowen measure.

\section{Dissipative actions are factor universal}
\label{diss-section}

Let $(\X,\F,\mu)$ be a non-atomic (standard) $\sigma$-finite measure space. An (nonsingular) \dff{automorphism} of $(\X,\F,\mu)$ is an invertible measurable map $R:\X\to \X$ which satisfies $\mu\circ R^{-1}\sim \mu$. Denote the automorphism group of $(\X,\F,\mu)$ by $\mathrm{Aut}(\X,\F,\mu)$. A  \dff{$G$-action} of a locally compact group $G$ on $(\X,\F,\mu)$, sometimes abbreviated as $G\curvearrowright (\X,\F,\mu)$, is a group homeomorphism $g\mapsto T_g$ from $G$ to $\mathrm{Aut}(\X,\F,\mu)$.

 The action $(\X',\F',\mu', (S_g)_{g \in G})$ is a \dff{factor} of $(\X,\F,\mu, (T_g)_{g \in G})$  if there exists a measurable map $\pi:\X\to \X'$ such that $\pi$ is equivariant, so that $\mu$ almost everywhere, for all $g\in G$, we have $\pi\circ T_g=S_g\circ \pi$, 
 and in addition 
 $\mu'\sim \mu\circ \pi^{-1}$; 
 if $\pi$ is invertible, modulo null sets, then we say that the two systems are \dff{isomorphic}.  We say that the action $G\curvearrowright (\X,\F,\mu)$ is \dff{factor-universal} if every $G$-action $G\curvearrowright (\X',\F',\mu')$ is a factor.

\subsection{Discrete group actions} 
 Let $G$ be a countable group. Given an action $G\curvearrowright (\X,\F,\mu)$, a set $W\in\F$ is \dff{wandering} if $\left\{T_gW\right\}_{g\in G}$ are pairwise disjoint;  the action is \dff{dissipative} if there exists a wandering set with $\mu(W)>0$ and it is \dff{totally dissipative} if there is a wandering set $W$ such that the  union of its translates is the whole space modulo a null set: $\biguplus_{g\in G}T_gW=\X \bmod \mu$.  The action is \dff{conservative} if every wandering set is of measure zero. A set $A\in \F$  is \dff{$G$-invariant} if for all $g\in G$, we have $T_g^{-1}A=A$.   
 

The following action is the prototypical example of a totally dissipative action of a non-atomic measure space. Let $Z=[0,1]\times G$ and $\nu$ be the product measure of the uniform measure on $[0,1]$ and the counting measure on $G$. The \dff{translation action} of $G$ on $Z$ is defined by $S_g(x,h)=(x,hg)$. The following result is due to Hopf, and closely related to the Hopf decomposition which partitions the space into a dissipative part and a conservative part; see \cite[Propostion 1.1.2 and Exercise 1.2.1]{AaroBook}.
\begin{proposition}[Hopf]
	\label{Hopf}
	If  $G\curvearrowright (\X,\F,\mu)$ is  a totally dissipative action of a non-atomic measure space, then is isomorphic to the translation action of $G$ on $[0,1]\times G$. 
\end{proposition}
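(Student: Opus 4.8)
The plan is to build the isomorphism explicitly out of the wandering set supplied by total dissipativity, reducing the problem to the classical isomorphism theorem for standard measure spaces. Fix $W\in\F$ with $\mu(W)>0$ and $\biguplus_{g\in G}T_gW=\X\bmod\mu$. After discarding a null set, every $x\in\X$ lies in $T_gW$ for a unique $g=g(x)\in G$, so the maps $\pi_G:\X\to G$ and $\pi_W:\X\to W$ given by $\pi_G(x)=g(x)$ and $\pi_W(x)=T_{g(x)}^{-1}x$ are well defined; they are measurable since $\pi_G^{-1}(\{g\})=T_gW\in\F$, since $\pi_W$ agrees with the measurable map $T_g^{-1}$ on each $T_gW$, and since $G$ is countable. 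Thus $x\mapsto(\pi_W(x),\pi_G(x))$ identifies $(\X,\F,\mu)$, modulo null sets, with a measure space of the form $(W\times G,\ \F|_W\otimes 2^{G},\ \lambda)$ on which each $T_g$ acts as translation in the $G$-coordinate.

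Next I would trivialize the base. As a measurable subset of the standard space $\X$ carrying the non-atomic $\sigma$-finite measure $\mu|_W$, the space $(W,\F|_W,\mu|_W)$ is, by the isomorphism theorem for standard measure spaces (applied to an equivalent probability measure, which is again non-atomic), isomorphic to $([0,1],\mathrm{Leb})$ in the sense of equivalence of measures: there is an invertible-mod-null measurable $\phi:W\to[0,1]$ with $\mu|_W\circ\phi^{-1}\sim\mathrm{Leb}$. Define $\Psi:\X\to[0,1]\times G$ by $\Psi(x)=(\phi(\pi_W(x)),\pi_G(x))$, with inverse $(y,g)\mapsto T_g(\phi^{-1}(y))$; the inverse is again measurable because $G$ is countable, so $\Psi$ is a bijection modulo null sets.

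It then remains to verify the two requirements in the definition of an isomorphism. For equivariance, if $x=T_hw$ with $w\in W$ then $T_gx=T_{gh}w$, so $\pi_W(T_gx)=\pi_W(x)$ while $\pi_G(T_gx)$ is obtained from $\pi_G(x)$ by translation by $g$; hence $\Psi\circ T_g=S_g\circ\Psi$, where $S$ is the translation action on $[0,1]\times G$ (one reads off here the normalization that makes the $G$-coordinate transform exactly as in the stated $S_g$). For the measure class, restricted to the slice $[0,1]\times\{g\}$ the pushforward $\mu\circ\Psi^{-1}$ is $\phi_*$ of the measure $A\mapsto\mu(T_gA)$ on $W$, which is equivalent to $\mu|_W$ by nonsingularity of the action and hence, pushed forward by $\phi$, equivalent to $\mathrm{Leb}$; summing over the countably many slices gives $\mu\circ\Psi^{-1}\sim\mathrm{Leb}\otimes(\text{counting measure})=\nu$.

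None of these steps is hard, so the ``main obstacle'' is really a matter of bookkeeping rather than of ideas: one must work throughout with equivalence classes of measures rather than measures, and the crucial point is that nonsingularity of the $G$-action is precisely what lets the Radon--Nikodym factors relating the various slices $T_gW$ be absorbed into the measure class. The only genuine use of a hypothesis stronger than dissipativity is that $\biguplus_{g}T_gW=\X\bmod\mu$, which is exactly total dissipativity and is what makes $\Psi$ a bijection mod null rather than merely an injection onto a proper subset.
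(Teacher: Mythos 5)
Your proposal is correct and follows essentially the same route as the paper: fix a wandering set $W$ whose translates tile the space, use the Borel isomorphism theorem to identify $(W,\mu|_W)$ with $([0,1],\mathrm{Leb})$ up to measure equivalence, and send $x\in T_gW$ to $(\phi(T_g^{-1}x),g)$. You are in fact slightly more explicit than the paper about verifying the measure class slice by slice via nonsingularity; the only point to tidy is the left/right convention in the $G$-coordinate, which you already flag.
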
 
\begin{proof}
By passing to an absolutely continuous probability we may assume that $\mu(\Omega)=1$. Since the action is totally dissipative, let $W\in \F$ be a wandering set whose disjoint union of translates are the whole space.     Recall we assume that the measure space is standard and  non-atomic and $\m(W)>0$.   By the Borel isomorphism theorem \cite[Theorem 3.4.23]{borel}, let $\beta:W\to [0,1]$ be a bijective measurable map such that $\tfrac{d\mu\circ \beta}{d\mu}=\tfrac{1}{\mu(W)}$.   Define $\pi:\X\to [0,1]\times G$ by $\pi(x)=\left(\beta\left(T_g^{-1}x\right), g\right)$ where $g\in G$ is the unique group element such that $x\in T_gW$.  Clearly, $\pi$ is bijective and equivariant from which it follows that $\pi$ is an isomorphism.  
\end{proof}
\begin{theorem}
\label{Dissipative actions}
A totally dissipative action of a countable group on a non-atomic measure space is factor universal. 
\end{theorem}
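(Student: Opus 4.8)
My plan is to reduce, via Proposition~\ref{Hopf}, to the explicit model of a totally dissipative action, and then produce the factor map by prescribing it on a fundamental domain and extending equivariantly. Since ``is a factor of'' is transitive and invariant under isomorphism, and Proposition~\ref{Hopf} identifies every totally dissipative $G$-action on a non-atomic standard $\sigma$-finite space with the translation action of $G$ on $Z=[0,1]\times G$ (with $\nu$ the product of Lebesgue measure on $[0,1]$ and counting measure on $G$), it is enough to show that this translation action is factor universal. So I would fix an arbitrary target action $(\X',\F',\mu',(S_g)_{g\in G})$ on a standard $\sigma$-finite space.

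Next I would normalise the target and choose a base map. Passing to an equivalent probability measure, I may assume $\mu'$ is a probability measure; this changes nothing, since each $S_g$ remains nonsingular and the relation $\sim$ between measures is unaffected by replacing measures by equivalent ones on either side. Because $\X'$ is standard, the Borel isomorphism theorem supplies a measurable $\phi\colon[0,1]\to\X'$ with $\mathrm{Leb}\circ\phi^{-1}=\mu'$ (an isomorphism of measure spaces when $\mu'$ is non-atomic, and a quantile-type measurable surjection in general); only the weaker statement $\mathrm{Leb}\circ\phi^{-1}\sim\mu'$ will be used.

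I would then define the factor map. The set $D=[0,1]\times\{e\}$ is a fundamental domain for the translation action: every point of $Z$ is uniquely the translate $g\cdot(x,e)$ of a point of $D$, with second coordinate $g$. Let $\pi\colon Z\to\X'$ be the unique equivariant extension of $(x,e)\mapsto\phi(x)$; concretely $\pi(x,g)=S_g(\phi(x))$. Three routine verifications remain. \emph{Measurability}: $Z=\biguplus_{g\in G}[0,1]\times\{g\}$ is a countable disjoint union of standard pieces, and on the piece indexed by $g$ the map $\pi$ is $S_g\circ\phi$. \emph{Equivariance}: $\pi(g\cdot(h\cdot(x,e)))=\pi((gh)\cdot(x,e))=S_{gh}(\phi(x))=S_g(S_h(\phi(x)))=S_g(\pi(h\cdot(x,e)))$, using that both the translation action and $(S_g)$ are homomorphisms. \emph{Equivalence of pushforwards}: for $A\in\F'$,
\[
\nu\circ\pi^{-1}(A)=\sum_{g\in G}\mathrm{Leb}(\phi^{-1}(S_g^{-1}A));
\]
if $\mu'(A)=0$ then $\mu'(S_g^{-1}A)=0$ for every $g$ by nonsingularity of $S_g$, so every summand vanishes and $\nu\circ\pi^{-1}(A)=0$, while if $\nu\circ\pi^{-1}(A)=0$ then already the $g=e$ summand $\mathrm{Leb}(\phi^{-1}(A))$ vanishes, forcing $\mu'(A)=0$. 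Hence $\nu\circ\pi^{-1}\sim\mu'$ and $\pi$ realises the target as a factor.

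The part I expect to demand the most attention is conceptual rather than computational: equivariance forces $\pi$ to be determined by its restriction to a fundamental domain, so the whole problem collapses to choosing the base map $\phi$ so that the aggregate pushforward $\sum_{g\in G}(S_g)_*\phi_*\mathrm{Leb}$ has exactly the null sets of $\mu'$ — and the Borel isomorphism theorem provides such a $\phi$ in one stroke. Two small points deserve a remark: $\nu\circ\pi^{-1}$ need not be $\sigma$-finite (for infinite $G$ it typically takes only the values $0$ and $+\infty$), but the definition of factor requires only agreement of null sets, so this is harmless; and no non-atomicity hypothesis on the target is needed. Together with Proposition~\ref{Hopf} this yields Theorem~\ref{Dissipative actions}, hence Theorem~\ref{thm: dissipative factors}; the flow analogue Theorem~\ref{Krengel} follows by the same scheme, with the structure theorem for totally dissipative flows in place of Proposition~\ref{Hopf}.
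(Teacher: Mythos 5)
Your proof is correct but takes a genuinely different route from the paper's. You use Hopf's theorem to reduce to the concrete translation model $Z=[0,1]\times G$, then build the factor map by hand: prescribe it on the fundamental domain $[0,1]\times\{e\}$ via a base map $\phi\colon[0,1]\to\X'$ supplied by the Borel isomorphism theorem, and extend equivariantly as $\pi(x,g)=S_g(\phi(x))$. The paper instead uses a product trick: form the direct product $G$-action on $\X_1\times\X_2$ of the dissipative source with the arbitrary target, note that the coordinate projection onto $\X_2$ is trivially a factor map, observe that the product inherits total dissipativity from the first coordinate, and apply Hopf to obtain an isomorphism $\X_1\cong\X_1\times\X_2$; composing gives the desired factor map without any explicit formula. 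Both proofs hinge on Hopf, but the paper's argument never engages with the measure-theoretic structure of the target --- no Borel isomorphism on $\X'$, no case split over whether $\mu'$ has atoms --- whereas your argument makes the mechanism transparent and lays bare that the factor map amounts to choosing a base map on a fundamental domain with the correct pushforward null sets. One incidental remark: the paper's displayed formula $S_g(x,h)=(x,hg)$ defines an \emph{anti}-homomorphism for non-abelian $G$; your equivariance computation tacitly uses the homomorphic convention $T_g(x,h)=(x,gh)$, which is evidently what is intended (the same correction is needed in the paper's proof of Proposition~\ref{Hopf}).
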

\begin{proof}
Let the first coordinate,  $\left(\X_1,\F_1 ,\mu_1,(T_g)_{g\in G}\right)$ be a totally dissipative $G$-action of a non-atomic measure space, and let the second coordinate,  $\left(\X_2,\F_2,\mu_2, (S_g)_{g\in G}\right)$ be any nonsingular $G$-action. Consider the direct product action $(T_g\otimes S_g)_{g\in G}$, which is a nonsingular $G$-action on $\left(\X_1\times \X_2,\F_1\otimes \F_2,\mu_1\otimes \mu_2\right)$. Clearly the projection $\pij_{2}$  from $\X_1\times \X_2$ to the second coordinate is a factor map.


It is easy to verify that the product action inherits the   totally dissipative property from the first coordinate.  By Proposition \ref{Hopf}, there exists an isomorphism $\psi:\X_1 \to \X_1\times \X_2$ between the first  action and  the direct product action. We deduce that the composition $\pij_{2} \circ \psi$ illustrated by
\begin{equation*}
\X_1\xrightarrow{\psi} (\X_1\times \X_2) \xrightarrow{\pij_{2}} \X_2,
\end{equation*}
 is the desired factor map from the first coordinate to the second.
\end{proof}
The following shows that the latter are the only factor universal actions. 
\begin{proposition}
If $G\curvearrowright (\X,\F,\mu)$ has a totally dissipative factor, then it is totally dissipative. 
\end{proposition}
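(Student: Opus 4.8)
The plan is to pull a witnessing wandering set back along the factor map and then delete a $G$-invariant null set so that its translates become genuinely disjoint. Let $\pi:\X\to\X'$ realize a totally dissipative action $(\X',\F',\mu',(S_g)_{g\in G})$ as a factor, and fix a wandering set $W'\in\F'$ for that action whose translates cover it: $\biguplus_{g\in G}S_gW'=\X'\bmod\mu'$. The starting observation is the equivariance identity $\pi^{-1}(S_gA)=T_g\pi^{-1}(A)\bmod\mu$, valid for every $A\in\F'$ and $g\in G$, which follows from $\pi\circ T_g=S_g\circ\pi$ a.e.\ together with the nonsingularity of $T_g$. Setting $W:=\pi^{-1}(W')$ and using $\mu\circ\pi^{-1}\sim\mu'$, this identity yields at once: $\mu(W)>0$ (since $\mu'(W')>0$); $\mu(T_gW\cap T_hW)=0$ for all $g\neq h$ (since the sets $\pi^{-1}(S_gW')$ are genuinely disjoint, being preimages of disjoint sets); and $\bigcup_{g\in G}T_gW=\X\bmod\mu$ (since this union agrees mod $\mu$ with $\pi^{-1}\bigl(\bigcup_gS_gW'\bigr)$, which is conull as $\mu\circ\pi^{-1}\sim\mu'$ and $\bigcup_gS_gW'=\X'\bmod\mu'$).

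At this point $W$ is a wandering set only modulo $\mu$, and the remaining work is the standard upgrade to an honest wandering set. I would form the $\mu$-null set $N:=\bigcup_{g\neq h}(T_gW\cap T_hW)$ --- a countable union because $G$ is countable --- and then its $G$-saturation $\widehat N:=\bigcup_{g\in G}T_gN$, which is again $\mu$-null by nonsingularity of the $T_g$ and is $G$-invariant by construction. Put $\widetilde W:=W\setminus\widehat N$. Since $\widehat N$ is invariant and $\widetilde W$ is disjoint from $\widehat N$, all translates $T_g\widetilde W$ lie in the invariant set $\X\setminus\widehat N$; but any overlap $T_g\widetilde W\cap T_h\widetilde W$ with $g\neq h$ is contained in $N\subseteq\widehat N$, so these overlaps are empty and $\widetilde W$ is genuinely wandering. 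Finally $\widetilde W$ differs from $W$ by the null set $W\cap\widehat N$, so $\mu(\widetilde W)=\mu(W)>0$ and $\bigcup_{g\in G}T_g\widetilde W=\bigcup_{g\in G}T_gW=\X\bmod\mu$; that is, $\biguplus_{g\in G}T_g\widetilde W=\X\bmod\mu$, which is precisely total dissipativity of $G\curvearrowright(\X,\F,\mu)$.

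I do not expect a genuine obstacle: all the content sits in the equivariance identity and in routine bookkeeping of null sets under the nonsingular maps $T_g$. The one point requiring a little care --- and the reason one cannot simply declare $\pi^{-1}(W')$ to be the desired wandering set --- is the passage from ``pairwise disjoint modulo $\mu$'' to ``pairwise disjoint'', which is exactly what deleting the invariant null set $\widehat N$ accomplishes; one checks that this deletion costs nothing because $\widehat N$ is null and $G$-invariant. (For a general locally compact $G$ the same argument goes through verbatim once the countable union defining $N$ is replaced by a measurable union of the family $\{T_gW\cap T_hW\}_{g\neq h}$.)
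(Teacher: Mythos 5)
Your proof is correct, but it takes a different route from the paper's. The paper argues by contradiction via the Hopf decomposition: assuming the action is not totally dissipative, it restricts to the conservative part (which is $G$-invariant), pulls back a single positive-measure wandering set from the totally dissipative factor, and obtains a positive-measure wandering set inside a conservative action --- a contradiction. You instead give a direct, constructive argument: you pull back a wandering set $W'$ whose translates tile $\X'$, verify via the equivariance identity $\pi^{-1}(S_gA)=T_g\pi^{-1}(A)\bmod\mu$ that $W=\pi^{-1}(W')$ is wandering mod $\mu$ with conull orbit, and then delete the $G$-invariant null saturation $\widehat N$ to produce an honest wandering set witnessing total dissipativity. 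Each approach has its merits. The paper's contradiction route is shorter because it only needs to exhibit \emph{one} positive-measure wandering set upstairs (dissipativity suffices to contradict conservativity), but it leans on the Hopf decomposition and leaves the same mod-null issues implicit that you handle explicitly (its pulled-back set is also only wandering modulo $\mu$, and one must check the pullback meets the conservative part in positive measure, which in fact uses the covering property of $W'$). Your route avoids Hopf entirely and yields the witnessing wandering set explicitly, at the cost of the null-set bookkeeping, which you carry out correctly --- in particular the key step of saturating the overlap set $N$ to an invariant null set before deleting it, so that disjointness of the translates of $\widetilde W$ becomes genuine rather than merely mod $\mu$.
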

\begin{proof}
Towards a contradiction, suppose that the action is not totally dissipative and it has a totally dissipative factor $(\X',\F',\mu',(S_g)_{g\in G})$, via the factor map $\pi$.   Since the conservative part in the Hopf decomposition is $G$-invariant, it follows that by restricting the factor map to
 the conservative part
 we may assume without loss of generality that  $G\curvearrowright (\X,\F,\mu)$ is a conservative action.  

Choose some $W\in \F'$ a wandering set for $(S_g)_{g\in G}$ of positive $\mu'$-measure. It is easy to see that set $\pi^{-1}W\in \F$ is a wandering set of positive measure, contradicting the conservativity of  $G\curvearrowright (\X,\F,\mu)$.
\end{proof}
\subsection{$\mathbb{R}$ and $\mathbb{R}^d$-flows}
A nonsingular $\mathbb{R}^d$-action $\left(\X,\F,\mu,\left(\phi_t\right)_{t\in\R^d}\right)$ is a \textbf{$\R^d$-flow}, and the case $d=1$ will simply be referred to as a \dff{flow}.    An $\mathbb{R}^d$-flow is \dff{totally dissipative} if for every $s>0$, we have that $\left(\phi_{sn}\right)_{n\in\Z^d}$ is a totally dissipative $s\Z^d$ action; this is equivalent to verifying the dissipativity for the single case of $s=1$ \cite[Corollary 1.6.5]{AaroBook}.      Sometimes we will denote Lebesgue measure on $\mathbb{R}^d$ by $\Leb = \Leb_{\R^d}$.  Let $(\Gamma, \mathcal{C}, \nu)$ be a standard probability space \cite{MR0047744}.   A \dff{translation flow of $\R^d$ with respect to $\Gamma$}  is the dissipative action on $\Gamma \times \R^d$, equipped with  the product measure  $\nu \otimes \Leb$, given by  $\tau_s(z,t)=(z,t+s)$, for all $s,t \in \R^d$.       When $\Gamma$ is the unit interval and $\nu$ is Lebesgue measure, then we say that translation flow is \dff{canonical}.  Note that when $\Gamma = \ns{\ast}$ is a one point space, the translation flow is ergodic, and we may omit $\Gamma$;  by the theorem below, all ergodic dissipative flows are isomorphic to this translation flow.  
 
The following analogue of Proposition \ref{Hopf} was proved by Krengel \cite[Satz 4.2]{MR296254} for flows and by Rosinski \cite[Theorem 2.2]{MR1813849} for $\R^d$-flows by constructing a relevant cross section.
\begin{theorem}[Krengel and Rosinski]
\label{Krengel}
For every totally dissipative $\R^d$-flow on a nonatomic measure space, there exists a standard probability space, $(\Zg,\mathcal{C},\nu)$ such that the flow is isomorphic to the translation flow  of $\R^d$ with respect to $\Gamma$.
\end{theorem}

	We note that in  Rosinski \cite{MR1813849},  $(\Zg,\mathcal{C},\nu)$ is a $\sigma$-finite standard, measure space.  By passing to a $\nu$ equivalent probability measure, we may assume that $\nu$ is a probability measure since the identity map gives the obvious isomorphism.    In addition, we may substitute for $(\Zg,\mathcal{C},\nu)$, any other probability space that is isomorphic in the category of measure.  
	 The space $\Zg$ corresponds to the ergodic decomposition of the flow. In particular,  the translation flow $\tau$ is ergodic if and only if $\Zg$ is a one point space.
By Theorem \ref{Krengel}, an ergodic dissipative flow of a non-atomic measure space is isomorphic to the translation flow on $\R^d$ with respect to the one point space $\Gamma = \ns{\ast}$, which we refer to as \dff{the ergodic dissipative $\R^d$-flow}.
\begin{theorem}[Factors of totally dissipative flows]
\label{Dissipative flows}
\hspace{2 cm}
\begin{enumerate}[(a)]
	\item  
	The canonical   translation flow of $\R^d$ is factor-universal. 
	\item 
	The ergodic dissipative $\R^d$-flow is a factor of any totally dissipative $\R^d$-flow on a non-atomic measure space.  
\end{enumerate}
\end{theorem}

\begin{proof}
For the proof of part (a),  let $(\X,\F,\mu,(\phi_s)_{s\in \R^d})$ be a given  nonsingular flow, which may not be dissipative.     Let $\tau$ be the canonical translation flow.  The product flow $\phi\otimes \tau = (\phi_s \otimes \tau_s)_{s \in \R^d}$ is a totally dissipative flow on a non-atomic measure space. By Theorem \ref{Krengel}, there exists a standard probability space, $(\Zg,\mathcal{C},\nu)$ such that $\phi\otimes \tau$  is isomorphic to the translation flow of $\R^d$  with respect to $\Gamma$; let $\Theta: \X\times ([0,1]\times \R) \to \Gamma \times \R^d$ be the isomorphism.

By a routine variation of the Borel isomorphism theorem, there exists a  homomorphism of probability spaces $g:[0,1]\to \Gamma$ such that $\Leb_{[0,1]}\circ g^{-1}=\nu$.  Let $\pij_{\X}:\X\times ([0,1]\times \R^d)\to \X$ be the projection onto $\X$.  Then the following compositions of mappings given by the diagram
\[
[0,1]\times \R^d\xrightarrow[]{g\otimes \id_{\R^d}} \Zg\times \R \xrightarrow{\Theta^{-1}} \X\times ([0,1]\times \R^d) \xrightarrow{\pij_{\X}}\X, 
\]
gives a factor map from the canonical translation flow of $\R^d$ to the given flow. 

For part (b), again by Theorem  \ref{Krengel},  every dissipative $\R^d$-flow is isomorphic to the translation flow of $\R^d$  with respect to some standard probability space $\Gamma$.   Next, note that the projection $\pij_{\R^d}: \Gamma \times \R^d \to \R^d$ is  a factor map from the dissipative translation flow of $\R^d$ with respect to $\Gamma$ to the ergodic dissipative $\R^d$-flow. 
\end{proof}
\begin{remark}The case of flows differs  from the countable group case since the translation flow on $\R^d$ given by $\tau_s(x)=x+s$ is ergodic. Consequently, not all totally dissipative flows on a non-atomic measure space are factor universal.   We thank Jon Aaronson for pointing out a misinterpretation of Theorem \ref{Krengel} which led to an incorrect formulation of Theorem \ref{Dissipative flows} in an earlier version of this manuscript.  \erk
\end{remark}

\subsection{Applications to chaos in $C^2$ Anosov diffeomorphisms and flows}
\label{AN-section}

Let $M$ be a compact Riemannian  manifold without boundary and $\vol_M$ be the volume measure on $M$. By the change of variables formula, for every diffeomorphism $f$, the  system  $(M, \borel(M), \vol_M,f)$ is  nonsingular.   Let $\Diff^k(M)$ be the collection of $C^k$-diffeomorphisms. A diffeomorphism is  \dff{(topologically) transitive} if there exists $x\in M$ such that $\left\{f^n(x):\ n\in\Z\right\}$ is dense in $M$. Similarly a \dff{$C^2$-flow} is a homeomorphism $s\mapsto \phi_s$ from $\R$ to $\Diff^2(M)$ and the flow is transitive if it has a dense $\R$-orbit.   Anosov systems are a central object of study in ergodic theory and dynamical systems \cite{MR0224110, MR2423393,MR0228014}.

\subsubsection{Applications to Anosov diffeomorphisms}
A diffeomorphism $f:M\to M$ is  \dff{Anosov}  (uniformly hyperbolic)  if for all $x\in M$, there exists a decomposition of the tangent bundle over $x$, given by 
$T_xM=E_x^{\mathrm{s}}\oplus E_x^{\mathrm{u}}$ such that:
\begin{itemize}
	\item For all $x\in M$, we have $Df(x)E_x^{\mathrm{s}}=E_{f(x)}^{\mathrm{s}}$ and 
	$Df(x)E_x^{\mathrm{u}}=E_{f(x)}^{\mathrm{u}}$. 
	\item For any metric on $TM$, there exists $a>0$ and $\lambda\in (0,1)$ such that for all $v\in E_x^{\mathrm{s}}$, for all $n\in\Z^{+}$, we have
	\[
	\left\|Df^n(x)v\right\|\leq a\lambda^n\left\|v\right\|,
	\]
	and similarly for all 
	$v\in E_x^{\mathrm{u}}$, 
	for all $n\in\Z^{+}$, we have 
	$$\left\|Df^{-n}(x)v\right\|\leq a\lambda^n\left\|v\right\|.$$ 
\end{itemize}

Our proof of Theorem \ref{Anosov diffeo chaos}  is a consequence of  Livsic-Sinai  \cite{MR0317355},  when a  Sinai-Ruelle-Bowen (SRB) measure \cite{MR1933431} is available; on the other hand,   Gurevic and Oseledec \cite{MR0320274} showed that in the absence of a volume absolutely continuous invariant probability (a.c.i.p),  the Anosov diffeomorphism is dissipative and thus Theorem \ref{Dissipative actions} applies; see also Theorem \ref{GO}.    We remark that in the categorical sense  \emph{most} $C^2$ Anosov diffeomorphism do not have a volume a.c.i.p.\ \cite{MR0399421}; see also \cite[Corollary 4.15]{MR2423393}.

\begin{proof}[Proof of Theorem \ref{Anosov diffeo chaos}]
	Let $f$ be a $C^2$ Anosov diffeomorphism.  If there exists a volume absolutely continuous invariant probability (a.c.i.p)  $\mu$, then  $\mu$ is an SRB measure and $(M,\B(M),\mu,f)$ has positive  entropy \cite{MR0317355}.  By the Sinai factor theorem $(M,\B(M),\mu,f)$ has an iid factor, and since $\mu\sim \vol_M$, this remains true when $\mu$ is replaced by $\vol_M$.   
	
	In the absence of a volume a.c.i.p.,  $(M,\B(M),\vol_M,f)$ is a totally dissipative transformation of a non-atomic measure space  \cite{MR0320274}, and  it follows from Theorem \ref{Dissipative actions} that every stationary Bernoulli shift is a factor.  
\end{proof}

%
%
%
%
%
%

\subsubsection{Applications to Anosov flows}
\label{ANflow-section}
We will prove a version of Theorem \ref{Anosov diffeo chaos} for the case of flows.
Following Ornstein \cite{MR0318452}, we say that a probability-preserving flow $\left(\X,\F,\mu,\left(\phi_t\right)_{t\in\R^d}\right)$  is a \dff{Bernoulli flow} if for every $s>0$, the discrete-time system  $\left(\X,\F,\mu,\phi_{s}\right)$ is isomorphic to a stationary Bernoulli shift.   Bernoulli flows are the analogues of iid-systems for $\R$-flows and two Bernoulli flows whose time-one maps have equal Kolmogorov-Sinai entropy are isomorphic \cite{MR3052869,MR0318452}.  Canonical examples of Bernoulli flows include the Totoki flow \cite{MR0265554,totoki,MR0318452}, and infinite entropy flows arising from Brownian motions and  Poisson processes \cite{Orn-mix-type,MR910005}.

A $C^2$ flow $\left(\phi_t\right)_{t\in\R}$ on a  compact Riemannian  manifold  $M$ is an \dff{Anosov flow} if for all $x\in M$, the decomposition 
$T_xM=E_x^{\mathrm{s}}\oplus E_x^{\mathrm{c}}\oplus E_x^{\mathrm{u}}$   
is $D\phi_t$ equivariant, 
$E^{\mathrm{c}}$ 
is one dimensional and corresponds to the direction of the flow, and there exists $A>0$ and $b>0$ such that for all $x\in M$, 
\begin{align*}
\left\|D\phi_t v\right\|\leq Ae^{-bt}\left\|v\right\|, &\ \ \ \  \text{for every}\ t>0\ \text{and}\ v\in E_x^{\mathrm{s}}.\\
\left\|D\phi_{-t} v\right\|\leq Ae^{-bt}\left\|v\right\|, &\ \ \ \  \text{for every}\ t>0\ \text{and}\ v\in E_x^{\mathrm{u}}.
\end{align*}

\begin{theorem}
\label{Anosov flow chaos}
A transitive $C^2$  Anosov flow, endowed with the natural volume measure, has a Bernoulli flow as a factor.
\end{theorem}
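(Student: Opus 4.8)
The plan is to follow the same dichotomy as in the proof of Theorem~\ref{Anosov diffeo chaos}, according to whether the transitive $C^2$ Anosov flow $\left(M,\B(M),\vol_M,(\phi_t)_{t\in\R}\right)$ admits a volume absolutely continuous invariant probability (a.c.i.p.).

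First suppose a volume a.c.i.p.\ $\mu$ exists. As in the diffeomorphism case, Livsic--Sinai theory \cite{MR0317355} identifies $\mu$ as the SRB measure and shows that $\left(M,\B(M),\mu,(\phi_t)_{t\in\R}\right)$ is an ergodic probability-preserving flow of positive Kolmogorov--Sinai entropy. I would then invoke the flow version of Sinai's factor theorem, within the Ornstein theory of Bernoulli flows \cite{MR0318452,MR3052869}: an ergodic flow of positive entropy has a Bernoulli flow as a factor (in the infinite-entropy case one may take the Bernoulli flow factor to have any prescribed finite, or infinite, entropy). For transitive $C^2$ Anosov flows one can alternatively reach the same conclusion geometrically, splitting according to Plante's dichotomy: if the flow is topologically mixing with respect to $\mu$ then by Ratner's theorem on mixing Anosov flows it is \emph{itself} a Bernoulli flow, while in the remaining transitive case it is a constant-roof suspension of a volume-preserving $C^2$ Anosov diffeomorphism, which is Bernoulli by Ornstein--Weiss theory, and one extracts a Bernoulli flow factor of the suspension. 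In all cases $\left(M,\B(M),\mu,(\phi_t)\right)$ has a Bernoulli flow factor, and since $\vol_M\sim\mu$ this remains true with $\mu$ replaced by $\vol_M$, as in the proof of Theorem~\ref{Anosov diffeo chaos}.

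If there is no volume a.c.i.p., then, exactly as Gurevic and Oseledec argued for Anosov diffeomorphisms \cite{MR0320274} and as will be carried out for flows in Section~\ref{ANflow-section} (see Theorem~\ref{GO}), the nonsingular flow $\left(M,\B(M),\vol_M,(\phi_t)_{t\in\R}\right)$ is totally dissipative on a non-atomic standard measure space. Theorem~\ref{Dissipative flows} then shows that this flow is factor universal, so in particular it has every Bernoulli flow as a factor --- for instance the Totoki flow \cite{MR0265554,totoki}, or an infinite-entropy Bernoulli flow arising from Brownian motion or a Poisson process \cite{Orn-mix-type,MR910005}.

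The delicate point is the a.c.i.p.\ branch: one must upgrade the soft fact that the flow has positive entropy to the existence of a genuine Bernoulli \emph{flow} factor, i.e.\ a flow-equivariant factor map onto a flow all of whose time-$s$ maps ($s>0$) are stationary Bernoulli shifts --- not merely a factor map intertwining the time-one maps. This is where I expect to lean on Ornstein's isomorphism theory for flows, and, for the non-mixing transitive Anosov flows left open by Plante's dichotomy, on a separate analysis of constant-roof suspensions of Bernoulli shifts; the dissipative branch, by contrast, is immediate from Theorem~\ref{Dissipative flows} once the Gurevic--Oseledec-type dissipativity statement for flows is established.
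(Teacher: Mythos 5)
Your overall strategy coincides with the paper's: split according to whether a volume a.c.i.p.\ $\mu$ exists, handle the dissipative branch via the Gurevic--Oseledec-type argument (Theorem~\ref{GO}) and factor universality of totally dissipative flows (Theorem~\ref{Dissipative flows}), and handle the a.c.i.p.\ branch by passing to the equivalent invariant measure. Where you diverge is in the a.c.i.p.\ branch. The paper is more economical: it cites Livsic--Sinai to identify $\mu$ as a Gibbs measure and then invokes Ornstein--Weiss and Ratner to conclude that the flow endowed with $\mu$ is \emph{itself} a Bernoulli flow, so the Bernoulli flow factor is just the system itself, and the nonsingular change of measure $\mu\sim\vol_M$ finishes the proof. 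You instead extract only the softer fact that the flow has positive entropy and then appeal to the flow version of Sinai's factor theorem within Ornstein theory to produce a Bernoulli flow factor; as a backup, you also sketch the Plante dichotomy (mixing versus constant-roof suspension). Both routes are valid. Yours buys some extra caution: by not insisting that the flow is Bernoulli, you sidestep the question of whether a transitive but non-mixing $C^2$ Anosov flow with a.c.i.p.\ (i.e.\ a constant-roof suspension over a volume-preserving Anosov diffeomorphism) falls under the Ornstein--Weiss--Ratner citation, whereas the paper's phrasing implicitly assumes it does. The paper's route buys brevity and a stronger conclusion in the mixing case. Either way the theorem follows.
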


Our proof of Theorem \ref{Anosov flow chaos} proceeds as in Theorem \ref{Anosov diffeo chaos}.  Sinai \cite{MR0399421} showed  that for every transitive  $C^2$ Anosov flow there exists two probability measures $\mu^+$ and $\mu^-$ such that for every continuous function $g:M\to \R$, for $\vol_M$-almost every $x \in M$, we have
\begin{align*}
\lim_{N\to\infty}\frac{1}{N}\int_0^N g\circ \phi_t(x)dt&=\int gd\mu^+,\ \ \text{and}\\
\lim_{N\to\infty}\frac{1}{N}\int_{-N}^0 g\circ \phi_t(x)dt&=\int gd\mu^-;
\end{align*}
see also \cite{MR380889} for the more general setting of \emph{Axiom A} flows. Again, the  measures $\mu^+$ and $\mu^-$ are called \dff{SRB measures}.    The following was proved by Gurevic and Oseledec for $C^2$ Anosov diffeomorphisms, and we present the identical proof for flows for completeness. 
\begin{theorem}[Gurevic and Oseledec]
\label{GO}
Let $(M,(\phi_s)_{s\in\R})$ be a transitive $C^2$ Anosov flow.  If there is no volume a.c.i.p.,  then $(M,\vol_M,(\phi_s)_{s\in\R})$ is totally dissipative. 
\end{theorem}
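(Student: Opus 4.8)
The plan is to mimic the Gurevic--Oseledec argument for diffeomorphisms, which rests on the observation that the SRB measures $\mu^+$ and $\mu^-$ are mutually singular whenever there is no volume a.c.i.p., and to convert this dichotomy into the existence of a large wandering set. First I would recall from Sinai's theorem the statistical convergence of Birkhoff averages: for $\vol_M$-almost every $x$, the forward time averages of any continuous $g$ converge to $\int g\,d\mu^+$, and the backward averages to $\int g\,d\mu^-$. The key case-split is whether $\mu^+ = \mu^-$. If $\mu^+ = \mu^-$ then this common measure is $\phi$-invariant, and a standard argument (using that $\vol_M$-a.e.\ point is generic for both averages, together with hyperbolicity/absolute continuity of the stable and unstable foliations) forces $\mu^+$ to be absolutely continuous with respect to $\vol_M$, i.e.\ a volume a.c.i.p.\ exists, contrary to hypothesis. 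Hence we are in the case $\mu^+ \neq \mu^-$, and therefore $\mu^+ \perp \mu^-$ (two distinct ergodic SRB measures for a transitive Anosov system are mutually singular, or one can separate them by a continuous function).

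Next I would pick a continuous $g : M \to \R$ with $\int g\,d\mu^+ \neq \int g\,d\mu^-$, say $\int g\,d\mu^+ < c < c' < \int g\,d\mu^-$, and use it to build a wandering set for the time-one map $\phi_1$. The idea is that for $\vol_M$-a.e.\ $x$, the running average $\frac{1}{T}\int_0^T g\circ\phi_t(x)\,dt$ eventually stays below $c$, while $\frac{1}{T}\int_{-T}^0 g\circ\phi_t(x)\,dt$ eventually stays above $c'$; roughly speaking, as one moves forward along an orbit the ``past average'' relaxes from near $\int g\,d\mu^-$ down toward $\int g\,d\mu^+$, so there is a well-defined ``last time'' at which the two-sided average through the origin crosses a given threshold. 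Concretely, I would consider a functional like $G(x) = \limsup$ or an explicit integral average over a growing window and define, for each orbit, the essentially unique time at which this crossing happens; the set $W$ of points where the crossing occurs within the unit time interval $[0,1)$ is a candidate wandering set, since each orbit meets it in a set of times of total length (at most) one, and $\{\phi_n W\}_{n\in\Z}$ are disjoint up to null sets. Since $\vol_M$-a.e.\ orbit has such a crossing time, $\bigcup_{n}\phi_n W = M \bmod \vol_M$, so the flow is totally dissipative for the $\Z$-action $(\phi_n)$, hence (by \cite[Corollary 1.6.5]{AaroBook}, already invoked in the excerpt) totally dissipative as an $\R$-flow.

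The main obstacle, and the step requiring the most care, is the construction and measurability of the wandering set $W$: one must produce a Borel function on $M$ recording a crossing time of the running average that is genuinely equivariant in the sense that the crossing time along $\phi_s x$ is exactly that of $x$ shifted by $-s$, so that the level set over $[0,1)$ is wandering and its translates exhaust the space. Handling the ``limsup versus liminf'' issues and ensuring the crossing time is a.e.\ finite and a.e.\ unique (so that no orbit is counted with a time-set of length more than one, and every orbit is counted at least once) is where the hyperbolic structure and Sinai's almost-sure convergence are really used. Once that set is in hand, the conclusion is immediate; alternatively, one can phrase the same construction via a measurable cross-section for the flow (as in Krengel--Rosinsky) and transfer total dissipativity through Theorem \ref{Krengel}, but the crossing-time description is the most transparent. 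I expect the write-up to reduce to: (i) the $\mu^+=\mu^-$ case yields an a.c.i.p.; (ii) otherwise $\mu^+\perp\mu^-$; (iii) build the equivariant crossing-time cross-section from a separating continuous function; (iv) conclude total dissipativity and invoke the $\R^d$ reduction.
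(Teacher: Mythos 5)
Your first half matches the paper: the dichotomy ``a volume a.c.i.p.\ exists iff $\mu^+=\mu^-$'' is exactly how the proof starts (the paper simply cites Sinai--Livsic \cite{MR0317355} for this rather than re-deriving it from absolute continuity of the foliations), and the engine of the argument is indeed a continuous $g$ with $\int g\,d\mu^+\neq\int g\,d\mu^-$ together with the observation that $\bigl(\int_0^n g\circ\phi_t\,dt\bigr)\circ\phi_{-n}(x)=\int_{-n}^0 g\circ\phi_t(x)\,dt$, so forward and backward Birkhoff averages along the same orbit are forced apart.

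Where you diverge --- and where your proposal has a real gap --- is in converting this into total dissipativity. You propose to build an explicit wandering set $W$ from a ``crossing time'' of the running average, and you correctly identify that making this crossing time a.e.\ finite, a.e.\ unique, measurable, and exactly equivariant is the hard part; but you never resolve it, and it is genuinely problematic (the running average need not cross a threshold monotonically, and a set of the form ``last crossing in $[0,1)$'' built from a set $A$ of measure only $1-\epsilon$ will not have translates exhausting the space without a further limiting argument). The paper sidesteps this entirely: fix $\delta=\tfrac12\bigl|\int g\,d\mu^+-\int g\,d\mu^-\bigr|$ and, for each $\epsilon>0$, use the SRB property of $\mu^+$ to get a set $A$ with $\vol_M(A)>1-\epsilon$ on which the forward averages stay $\delta$-close to $\int g\,d\mu^+$ for all $n>N$. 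Since the backward averages converge a.e.\ to $\int g\,d\mu^-$, almost every $x\in A$ satisfies $T^{-n}x\in A$ for only finitely many $n$ (where $T=\phi_1$). The Halmos recurrence theorem then says the conservative part of the Hopf decomposition meets $A$ in a null set, and letting $\epsilon\to0$ kills the conservative part altogether; no wandering set is ever exhibited. So your key idea is the right one, but the final step as you have written it is both incomplete and harder than necessary --- replace the crossing-time construction with the Halmos recurrence argument and the proof closes. (Your fallback via Krengel--Rosinsky does not help here either: that theorem classifies flows already known to be dissipative, it does not establish dissipativity.)
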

\begin{proof}

There exists a volume a.c.i.p.\ measure if and only if $\mu^+=\mu^-$ \cite{MR0317355}, so we have $\mu^+\neq \mu^-$.  Let $T=\phi_1$ be the time-one map of the flow.  Recall that it suffices to show that $T$ is totally  dissipative \cite[Corollary 1.6.5]{AaroBook}.    Moreover, note that  $T$ is totally dissipative if and only if $T^{-1}$ is totally dissipative.

Fix a  continuous function on $g:M\to \R$ with $\int gd\mu^+\neq \int gd\mu^-$. 
Set $\delta:=\tfrac{1}{2}{\left|\int gd\mu^+-\int gd\mu^-\right|}>0$.  Let $\epsilon>0$. As $\mu^+$ is a SRB measure, there exists $N>0$ such that the set
\[
A=\left\{x\in M:\ \forall n>N, \text{ we have } 
\left|\frac{1}{n}\int_0^n g\circ \phi_t(x)dt
-\int gd\mu^+\right|<\delta \right\},
\]
satisfies $\mu(A)>1-\epsilon$. For all $n\in\N$,  we have
\[
\left(\int_0^n g\circ \phi_tdt\right)\circ T^{-n}(x)=\int_{-n}^0 g\circ \phi_t(x)dt.
\]
As the latter tends  to $\int gd\mu^-$ as $n\to\infty$ for almost every $x \in M$,  for $\vol_M$-almost every $x\in A$, we have
\[
\#\{n\in\N:\ T^{-n}x\in A\}<\infty. 
\]
Since $\epsilon>0$ is arbitrary, it follows from Halmos recurrence theorem \cite[Theorem 1.1.1]{AaroBook} that $(M,\vol_M,T^{-1})$ is totally dissipative. 
\end{proof}

In order to use Theorem \ref{Dissipative flows} (a),    we will need the following lemma.

\begin{lemma} 
\label{space-atoms}
A transitive and totally  dissipative  $C^2$  flow $(\phi_s)_{s \in \R}$ on a compact manifold $M$ is isomorphic to the canonical translation flow of $\R$.
\end{lemma}

\begin{proof}
We already know from Theorem \ref{Krengel} that the  flow is isomorphic to the translation flow of $\R$, with respect to some standard probability space $(\Gamma, \mathcal{C}, \nu)$; thus it suffices to show that $\Gamma$ is non-atomic.   Towards a contradiction, let $\pi:\Gamma\times \R\to M$ be the isomorphism and $g \in \Gamma$ be a $\nu$-atom.  Since $\pi$ is an isomorphism,  there exists $x \in M$ such  that following its orbit on a manifold for one unit of time is the same as moving along the unit interval, so that 

\begin{align*}
(\nu\otimes \Leb)\circ\pi^{-1}(\ns{\phi_s(x): s \in [0,1]}) 
&=(\nu\otimes \Leb)(\{g\}\times [0,1])\\
&= \nu(g) \Leb([0,1]) >0.
\end{align*}
As $(\nu\otimes\Leb)\circ\pi^{-1}$ and $\vol_M$ are equivalent measures we have that $$\vol_M\ns{\phi_s(m): s \in [0,1]}>0.$$ We recall that the orbit  $s \mapsto \phi_s(x)$ is smooth and is the solution to first order autonomous ordinary differential equation \cite[page 795]{MR0228014}.  However,  it is well-known that the image of a smooth curve on a manifold, with dimension two or higher, has no volume \cite{MR7523}.  
\end{proof}

\begin{proof}[Proof of Theorem \ref{Anosov flow chaos}]
If the flow has a volume a.c.i.p.\ $\mu$, then  $\mu$ is a Gibbs measure \cite{MR0317355}.    It follows from a result of Ornstein and Weiss \cite{MR325926} and Ratner \cite{MR374387} that flow endowed with $\mu \sim \vol_M$ is isomorphic to a  Bernoulli flow. 

If there is no volume a.c.i.p.,  then by Theorem \ref{GO} the flow endowed with volume measure  is totally dissipative.   
 By Lemma \ref{space-atoms}, the flow is isomorphic to the canonical translation of $\R$, and hence by Theorem \ref{Dissipative flows} (a),    \emph{every} Bernoulli flow is a factor.
\end{proof}

\section{Near optimal Sinai factors}
\label{section-near}

\subsection{Markers and fillers}
\label{markers}
Let $\kmark \in \Z^{+}$ be a large positive integer which we will specify later.  Let $A$ be a finite set containing the two distinct  symbols $a$ and $b$.       Given $x \in A^{\Z}$, we call an integer interval $[n, n+2\kmark]$ a \dff{marker}, if $x_{n+i} =a$ for all $0 \leq i \leq 2\kmark -1$ and $x_{n+ 2\kmark} =b$.   Any integer that does not belong to a marker, belongs to a maximal  \dff{filler}, so that markers and fillers partition the integers.   Thus with markers and fillers, our task is to find an encoding of  a finite non-stationary sequence into a finite stationary sequence that retain most of the entropy.      A technical problem arises that we cannot control the size of the fillers, and for our construction, it will be convenient to have a version of  fillers of a fixed size.     The following idea is  from Ball \cite{ball} and its presentation is adapted from \cite{Qsc}.  We define a bi-infinite sequence of \dff{alternating} intervals $I(x) = (I_i)_{i \in \Z}$ that partition $\Z$ into
intervals of length $\kmark$ and $1$ in the following way.   Locate all the markers of $x$.
Any $n \in \Z$ that belongs to the right endpoint of a marker is an interval of length $1$,
following a marker will always be an interval of length $\kmark$, and if $x$ restricted to
the interval of length $\kmark$ is not a string of $\kmark$ consecutive $a$'s, then the following  interval will also be one of length $\kmark$, otherwise, the following intervals will
all be of length $1$, until the $a$ stops occurring; the following interval will be
one of length $\kmark$.  We say that a \dff{switch} occurs  in an alternating interval if it is an interval of length $\kmark$ and is a string of consecutive $a$'s or if its an interval of length $1$ and the symbol that is \emph{not} $a$ has appeared.    For definiteness, we require that $0 \in I_0$, and $\sup I_i < \inf I_j$ if $i < j$.    In what follows it will be more convenient to use the language of random variables.  

\begin{proposition}[Ball's alternating intervals]
\label{ball}
  Let $X = (X_i)_{i \in \Z}$ be random variables corresponding to the law of a Bernoulli shift on $A$.  Let $\kmark$ be a positive integer.   Conditioned on the alternating intervals $I(X) = (I_i)_{i \in \Z}$, the random sequence $X$ has the following properties:

\begin{itemize}
\item
The random variables $(X \ronn_{I_i})_{i \in \Z}$ are independent.  
\item
On each alternating interval $I_i$ of size $1$ not immediately left of an interval of size $\kmark$, we know that $X \ronn_{I_i} =a$; otherwise $X \ronn_{I_i} \not =a$, and a switch occurs.
\item
On each alternating interval $I_i$ of size $\kmark$ that is not immediately left of an interval of size $1$, the law of $X \ronn_{I_i}$, is the law of $X\ronn_ {I_i}$ conditioned not to be a string of $a$'s; otherwise $X \ronn_{I_i}$ is a string of $a$'s, and a switch occurs. 
\end{itemize}

\end{proposition}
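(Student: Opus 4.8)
The plan is to fix a realizable value $\mathcal I=(I_i)_{i\in\Z}$ of the deterministic map $x\mapsto I(x)$ (with $0\in I_0$) and to show that, modulo $\mu$-null sets, the event $\{I(X)=\mathcal I\}$ is a product event over the blocks $I_i$. Reading off the construction of $I(\cdot)$, one sees that the alternating rule constrains each block $I_i$ by exactly one condition, and that this condition is determined only by the ordered pair of lengths $\big(|I_i|,|I_{i+1}|\big)\in\{1,\kmark\}^2$: if $\big(|I_i|,|I_{i+1}|\big)=(1,1)$ then $X \ronn_{I_i}=a$; if it equals $(1,\kmark)$ then $X \ronn_{I_i}\neq a$; if it equals $(\kmark,\kmark)$ then $X \ronn_{I_i}$ is not a string of $a$'s; and if it equals $(\kmark,1)$ then $X \ronn_{I_i}$ is a string of $\kmark$ consecutive $a$'s. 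I will write $E_i=E_i(\mathcal I)$ for the corresponding event, which depends only on the coordinates of $X$ lying inside $I_i$.

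The core step will be the identity $\{I(X)=\mathcal I\}=\bigcap_{i\in\Z}E_i$ modulo $\mu$-null sets. For $\subseteq$ one simply reads off the construction rules: for instance, a length-$\kmark$ block whose right neighbour again has length $\kmark$ cannot have triggered a switch, hence is not a string of $a$'s, while a length-$1$ block whose right neighbour has length $\kmark$ did trigger a switch, hence carries a symbol different from $a$. For the reverse inclusion, I would fix $X\in\bigcap_iE_i$ and run the construction on $X$. First, $\mu$-a.s.\ the markers of $X$ occur infinitely often in both directions: by Borel--Cantelli, under the Doeblin condition in force the marker windows have probability bounded below, and marker windows sufficiently far apart are independent. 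Then, propagating the alternating rule forward from each marker endpoint, an induction along $\Z$ should show that --- because $X$ obeys the per-block constraints $E_i$, and because a run of $2\kmark$ consecutive $a$'s is long enough that the rule re-synchronises before the run ends --- the rule reproduces exactly the block pattern of $\mathcal I$ and places every marker endpoint inside a length-$1$ interval of $\mathcal I$. Since every integer lies to the right of some marker, this forces $I(X)=\mathcal I$.

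The rest is elementary probability. The coordinates $(X_i)_{i\in\Z}$ are independent and $E_i$ involves only the coordinates in $I_i$, so conditioning on $\bigcap_iE_i$ leaves the blocks $\big(X \ronn_{I_i}\big)_{i\in\Z}$ independent, with $X \ronn_{I_i}$ distributed as its unconditional law conditioned on $E_i$. Reading off the four cases then gives the three assertions: the blocks are conditionally independent (first item); a length-$1$ interval not immediately left of a length-$\kmark$ interval is the case $(1,1)$, which forces $X \ronn_{I_i}=a$, whereas any other length-$1$ interval is the case $(1,\kmark)$, which forces $X \ronn_{I_i}\neq a$, i.e.\ a switch occurs (second item); a length-$\kmark$ interval not immediately left of a length-$1$ interval is the case $(\kmark,\kmark)$, so $X \ronn_{I_i}$ carries its unconditional law conditioned not to be a string of $a$'s, whereas any other length-$\kmark$ interval is the case $(\kmark,1)$, so $X \ronn_{I_i}$ is a string of $a$'s, i.e.\ a switch occurs (third item). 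The passage from a fixed $\mathcal I$ of positive probability to the statement for $\mu$-almost every realisation of $I(X)$ is the routine disintegration argument over $\sigma\big(I(X)\big)$.

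The step I expect to be the main obstacle is the inclusion $\bigcap_iE_i\subseteq\{I(X)=\mathcal I\}$: this is the single place where one must verify that the purely local, per-block constraints suffice to recover the globally defined partition, and the verification hinges on the combinatorial observation that a block of $2\kmark$ consecutive $a$'s forces the alternating rule to re-synchronise irrespective of its alignment --- which is precisely why the marker length was taken to be $2\kmark$ rather than $\kmark$.
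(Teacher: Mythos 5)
Your route is genuinely different from the paper's. The paper's proof is a resampling/stopping-time argument: it builds a process $Y^n$ on $[-n,\infty)$ by sampling block-by-block exactly according to the alternating rule, observes that the switch times are stopping times so that $Y^n$ has the same law as $X|_{[-n,\infty)}$, and passes to the limit $n\to\infty$; the conditional structure of the blocks is then read off from the construction of $Y^n$ itself. You instead try to identify the conditioning event $\{I(X)=\mathcal I\}$ as a product event $\bigcap_i E_i$ over the blocks. Your combinatorial core is sound and you have correctly isolated the delicate inclusion $\bigcap_i E_i\subseteq\{I(X)=\mathcal I\}$: one does need to check that for $X\in\bigcap_i E_i$ a marker can only terminate at a length-$1$ block that is immediately followed by a length-$\kmark$ block (a run of $2\kmark$ consecutive $a$'s cannot hide inside a region of non-all-$a$ length-$\kmark$ blocks), and that markers still occur infinitely often to the left so that the propagation covers all of $\Z$. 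That part of your sketch I believe can be completed.

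The genuine gap is the final "routine disintegration". Every event $\{I(X)=\mathcal I\}$ has $\mu$-measure zero (infinitely many of the constraints $E_i$ have probability bounded away from $1$), so there is no "realizable $\mathcal I$ of positive probability" to condition on, and the elementary fact "conditioning a product measure on a product event of positive measure preserves independence of the blocks" is never directly applicable. The natural finite approximations do not rescue this: the event that $I(X)$ agrees with $\mathcal I$ on a finite window $\{-N,\dots,N\}$ is \emph{not} a product event and is not even local, because the block containing a given site is determined by the position of the nearest marker to its left, which is at unbounded distance. So the regular conditional distribution given $\sigma(I(X))$ has to be computed by some genuinely sequential device — a martingale-convergence argument along an increasing family of block $\sigma$-algebras, or precisely the stopping-time resampling that the paper uses. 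As written, your last sentence asserts the hardest measure-theoretic step without an argument. (A smaller point: you invoke the Doeblin condition for the Borel--Cantelli step, which is not a hypothesis of the proposition as stated, though it does hold in every application in the paper; some positivity assumption guaranteeing infinitely many markers is indeed needed for $I(X)$ to be well defined at all.)
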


\begin{proof}
Let $\rho$ be the law of $X$. Let $n \in \Z^{+}$.  Consider the random variables $Y^n = (Y_i)_{i =-n} ^{\infty}$ sampled using the procedure.  We start by sampling $\kmark$ elements from the measure $(\rho_{-n}, \ldots, \rho_{-n+k-1})$, and we continue to sample in blocks of $\kmark$ until a switch  occurs, in which case we sample one coordinate at a time, until a switch occurs, and then we go back to sampling $\kmark$ elements at a time.   Since switches are stopping times, it follows that $Y^n$ has the same law as $X^n=(X_{i})_{i=-n}^{\infty}$, for all $n \in \Z^{+}$.   The above properties clearly hold for the weak limit of $Y^n$ and thus hold for $X$. 
\end{proof}
\subsection{A Kakutani equivalent coding}
\label{earlier-dis}
Recall that by Kakutani's theorem \cite{MR23331}, we have that two infinite direct product measures $\mu$ and $\nu$ on $A^{\Z}$ satisfying the Doeblin condition are equivalent if and only if 
\begin{equation}
\label{KE-thm}
\sum_{n \in \Z} \sum_{a \in A}  (\mu_n(a)- \nu_n(a))^2<\infty.
\end{equation}

Note that Kakutani's theorem can be used to identify which Bernoulli shifts are nonsingular and which Bernoulli shifts are equivalent to stationary ones.     One can imagine a coin flipper who progressively gets better at flipping a coin, but does not get better so quickly that their flips are  indistinguishable from iid ones.

We will say that a statement holds for a product measure $\mu$ modulo or up to Kakutani equivalence if there is an equivalent measure $\nu$ for which the corresponding statement holds. 
A key idea in our proof of Theorem \ref{studia} is that although a Bernoulli shift may not be  given by independent and identical observations, non-identical observations can be combined in a way to yield identical observations, up to Kakutani equivalence.   Specifically, we adapted  von Neumann's observation  that one can simulate a fair coin  from a possibly biased coin, using what is now commonly referred to as   rejection sampling \cite{vonNeumann1951};  flip the biased coin twice: if we get 
HT, 
we report this as  
H and if we get 
TH, we report this as a 
T, and we are to repeat this procedure if we get either 
HH or TT.  We remark that in the nonsingular setting, we are faced with the added difficulty of using different coins for each flip.

Given a product measure $\rho=\bigotimes_{i\in\Z}\rho_i$, we let $\rho_i^{\inseq k}$ be the probability measure on $A^k$ distributed as $(\rho_i,\ldots,\rho_{i+k-1})$; we will use the notation $\rho_i^{\inprod k} = (\rho_i, \ldots, \rho_i)$ to denote the $k$-fold product of the measure $\rho_i$.   An important fact we will exploit is that the sequence of  measures $\rho_i^{\inseq k}$ and $\rho_i^{\inprod k}$ are equivalent,  in the sense of Kakutani; see Lemma \ref{lem: Kakuuu} and also \cite[Theorem 20]{KoSooFactors}.    Some of the proofs and lemmas are concerned with demonstrating that we can substitute a more difficult statement involving $\rho_i^{\inseq k}$ with a simpler  statement involving $\rho_i^{\inprod k}$.      If $Q$ is a probability measure  and $B$ is a measurable subset, then the probability measure given by 
$$ Q(\cdot |B) := \frac{Q(\cdot \cap B)}{Q(B)}$$
will sometimes be referred to as the \dff{conditional probability of $Q$ given $B$}.
%
%
\begin{proposition}[Kakutani equivalent coding]
	\label{KE-extract}
	Suppose $\rho$ is a nonsingular Bernoulli product measure that satisfies the Doeblin condition and has a limiting marginal measure ${p}$ on $A$.    Let $\e >0$.  There exists $\alpha>0$ and $\kmark$ sufficiently large such that for all $k\geq\kmark$ there exists a subset $\mathcal{G}_k \subset A^{k}$ omitting the sequence $a^{k} \in A^k$, and a finite set $B_k$ with the following properties:
	\begin{enumerate}[(a)]
	\item $\# B_k\geq 2^{k(H(p)-\e)}$.
	\item For all but finitely many $i\in\Z$, we have $\rho_i^{\inseq k}(A^k\setminus\mathcal{G}_k)\leq e^{-\alpha k}$. 
	\item   There exists a single  function $\psi : \mathcal{G}_k \to B_k$ such that
	\begin{equation}
	\label{KE-sum}
	\sum_{i \in \Z} \sum_{{c} \in B_k} \left(  \rho^{\inseq k}_i \big( \psi^{-1} ({c})|\mathcal{G}_k \big) - \frac{1}{\# B_k} \right)^2 < \infty. 
	\end{equation}
	\end{enumerate}
\end{proposition}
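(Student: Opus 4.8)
The plan is to manufacture the combinatorial data $(\mathcal{G}_k, B_k, \psi)$ once and for all from Elias' extraction scheme \cite{elias}, and then to read off (a)--(c). The point of using Elias' scheme, rather than a na\"ive typical-set binning, is that \eqref{KE-sum} sums over \emph{all} $i \in \Z$ against the fixed constant $1/\#B_k$, so for a fixed $k$ the summand cannot merely be exponentially small in $k$: it must genuinely tend to $0$ as $|i| \to \infty$, and summably so. Since there is no finite exact extractor for a block of non-identical coins, the only way to arrange this is to pass to the auxiliary exchangeable measure $\rho_i^{\inprod k}$ on $A^k$, use an extractor whose output is \emph{exactly} uniform on it, and pay for the substitution $\rho_i^{\inseq k} \leftrightarrow \rho_i^{\inprod k}$ with the Kakutani equivalence of Lemma \ref{lem: Kakuuu}, whose defect is square-summable in $i$.

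Concretely, we may assume $H({p}) > 0$ (the case $H({p})=0$ being trivial with $\#B_k = 1$). Set $N := 2^{\lfloor k(H({p}) - \e/2)\rfloor}$ and $B_k := \{0,1\}^{\log_2 N}$, so that (a) holds as soon as $\kmark \geq 2/\e$. For $x \in A^k$ let $W(x) \in \{0,1\}^*$ be the Elias codeword of $x$ and $L(x) := |W(x)|$; recall that $W(x)$ depends only on the composition (letter-count vector) of $x$ and the rank of $x$ within its composition class, and that within a class of size $M$ the codewords of length $j$ are assigned to a block of $2^j$ sequences for each bit $j$ set in the binary expansion of $M$. Put $\mathcal{G}_k := \{x \in A^k : L(x) \geq \log_2 N\}$, which automatically omits $a^k$ since a singleton composition class has $L = 0$, and which is nonempty for $k$ large; and let $\psi(x)$ be the first $\log_2 N$ bits of $W(x)$, for $x \in \mathcal{G}_k$.

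For (c), the key is the exact identity $\rho_i^{\inprod k}(\psi^{-1}(c) \mid \mathcal{G}_k) = 1/\#B_k$, valid for every $i$ and every $c \in B_k$: because $\rho_i^{\inprod k}$ is uniform on each composition class, the block structure of $W$ gives $\rho_i^{\inprod k}(W = w \mid L = \ell) = 2^{-\ell}$ for every binary word $w$ of length $\ell$, and summing over $\ell \geq \log_2 N$ shows that the first $\log_2 N$ bits of $W$, conditioned on $\mathcal{G}_k = \{L \geq \log_2 N\}$, are uniform on $B_k$. Consequently the summand in \eqref{KE-sum} is exactly $\big(\rho_i^{\inseq k}(\psi^{-1}(c) \mid \mathcal{G}_k) - \rho_i^{\inprod k}(\psi^{-1}(c) \mid \mathcal{G}_k)\big)^2$. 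Since $\psi^{-1}(c) \subseteq \mathcal{G}_k$ and, by the Doeblin condition, both $\rho_i^{\inseq k}(\mathcal{G}_k)$ and $\rho_i^{\inprod k}(\mathcal{G}_k)$ are at least $\delta^k$, an elementary estimate of the difference of two conditional probabilities bounds this by $C_k \sum_{w \in A^k}(\rho_i^{\inseq k}(w) - \rho_i^{\inprod k}(w))^2$ for a constant $C_k$ depending only on $k$, $|A|$ and $\delta$; summing over $i$ and over $c \in B_k$ and invoking Lemma \ref{lem: Kakuuu}, which gives (via the Doeblin condition) that $\sum_i \sum_{w \in A^k}(\rho_i^{\inseq k}(w) - \rho_i^{\inprod k}(w))^2 < \infty$, yields \eqref{KE-sum}.

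Finally, (b) is the most computational part and the main obstacle. Because ${p}$ is the limiting marginal, for all but finitely many $i$ the marginals $\rho_i, \dots, \rho_{i+k-1}$ all lie within a prescribed small $\gamma$ of ${p}$; for such $i$, Hoeffding's inequality shows that an $\rho_i^{\inseq k}$-random block has empirical composition within $2\gamma$ of ${p}$ except on a set of $\rho_i^{\inseq k}$-measure $e^{-\Omega(k)}$. For a composition $c$ within $2\gamma$ of ${p}$, Stirling gives $\binom{k}{c} \geq 2^{k(H({p}) - \eta)}$ for $k$ large, with $\eta = \eta(\gamma) \to 0$ as $\gamma \to 0$, so $\binom{k}{c}$ dwarfs $N$; moreover at most $N - 1$ sequences in that class have $L < \log_2 N$, and each has $\rho_i^{\inseq k}$-mass at most $2^{-k(H({p}) - \eta)}$, so the class contributes at most $2^{-k(\e/2 - \eta)}$ to $\rho_i^{\inseq k}(A^k \setminus \mathcal{G}_k)$. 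Summing over the at most $(k+1)^{|A|}$ relevant classes and adding the atypical part shows $\rho_i^{\inseq k}(A^k \setminus \mathcal{G}_k) \leq e^{-\alpha k}$ for all but finitely many $i$, provided $\gamma$ (hence $\eta$) is chosen small enough in terms of $\e$ and then $\alpha>0$ and $\kmark$ are fixed accordingly. The delicate point is precisely this uniform-in-$i$ exponential bound for the non-exchangeable measure $\rho_i^{\inseq k}$, which is what forces the careful choice of the typicality window $\gamma$, together with the observation that, whatever the measure, only $O(N)$ sequences in a composition class can receive a short Elias codeword.
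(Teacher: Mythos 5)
Your proof is correct and takes a genuinely different route from the paper's, though the two are close cousins. The paper constructs $(\mathcal{G}_k, B_k, \psi)$ by hand: for each type $q$ with empirical measure near $p$, it selects a subset $F_k(q)\subset \Ty_k(q)$ whose cardinality is an exact multiple of $\#B_k$, sets $\mathcal{G}_k=\biguplus_q F_k(q)$, and lets $\psi$ be an arbitrary $m(q,k)$-to-$1$ map on each $F_k(q)$; property (c) then follows directly from Proposition~\ref{prop: Kakuu}, and property (b) from the Sanov-type estimate of Lemma~\ref{lem: concentration}. You replace the ad hoc chopping and arbitrary $m(q,k)$-to-$1$ map with the canonical Elias extractor: $\mathcal{G}_k$ becomes $\{L\geq\log_2 N\}$ and $\psi$ the first $\log_2 N$ bits of the codeword. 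Both constructions exploit the same underlying fact --- that $\rho_i^{\inprod k}$ is exactly uniform on each type class, so one can design $\psi$ to be \emph{exactly} uniform under the exchangeable measure and then pay for the swap $\rho_i^{\inseq k}\leftrightarrow\rho_i^{\inprod k}$ with Lemma~\ref{lem: Kakuuu} --- you just invoke Lemma~\ref{lem: Kakuuu} directly, whereas the paper routes it through Proposition~\ref{prop: Kakuu}. What the paper's version buys is elementary self-containedness and a tighter, per-type decomposition in (c); what yours buys is canonicality (no arbitrary choices of subsets or colourings) and a clean reformulation of (b) in terms of the Elias codeword-length statistic, where the ``at most $N-1$ short codewords per class'' count cleanly replaces the paper's $\#B_k/\#\Ty_k(q)$ chopping estimate \eqref{eq: F_k(q)}. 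Two small points worth tidying: your reduction ``we may assume $H(p)>0$'' should really be ``we may assume $\e<H(p)$'' (if $\e\geq H(p)$ then $\#B_k=1$ makes the whole statement trivial with $\mathcal G_k=A^k\setminus\{a^k\}$, which you need anyway since $\log_2 N=0$ in that regime means $\{L\geq 0\}=A^k$ does \emph{not} omit $a^k$); and the constant $C_k$ you produce in (c) via the crude $\rho_i^{\inseq k}(\mathcal G_k)\geq\delta^k$ lower bound is exponentially large in $k$, which is fine since $k$ is fixed in the sum over $i$, but it is worth remarking that this is why your bound is not uniform in $k$.
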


In our proof of Proposition \ref{KE-extract}, we make the following observation.   For simplicity consider the binary case where $A = \ns{0,1}$.   Each  $\rho_i^{\inseq k}$ can be  viewed as a disintegration given by first sampling from a distribution that gives the total number of ones  and then  sampling from a distribution that places the locations of the ones and zeros in the positions $(i, \ldots, i+k-1)$; in the case where $\rho$ is given by an identical product measure, the second distribution is given by a uniform distribution.   Since $\rho$ has a limiting distribution, by a large deviations argument, when $k$ is large, we can control the first distribution, so that we know up to an exponential error the number of ones that do occur,  and then it turns out even when $\rho$ is a nonsingular measure, the second distribution can be assumed to be uniform up to Kakutani equivalence.   We remark that in the iid case of  classical statistics, the first distribution corresponds to a  \emph{sufficient statistic} in the sense of Fisher \cite{fisher-suff}, which gives all the necessary information for estimating the parameter given by probability of an occurrence of a single one; in contrast,  we are more focused on the secondary uniform distribution, which contains no information about the parameter.

Before we give the details of the proof of Proposition \ref{KE-extract}, we show how it is used to prove Theorem \ref{near}.

\subsection{Using Proposition \ref{KE-extract}}

We say that an alternating interval of length $\kmark$ is \dff{good} if its values are in $\mathcal{G}_{\kmark}$.  Thus Proposition \ref{KE-extract}  allows us to replace $\kmark$ symbols that are asymptotically  distributed as $p^{\kmark}$ with a single random variable that is uniformly distributed on a set of size at least $2^{\kmark(H(p) - \epsilon)}$,  modulo Kakutani equivalence.   In our proof of Theorem \ref{near}, we will use Keane and Smorodinsky's finitary factor \cite{keanea} to generate a string of  $\kmark+1$  symbols  from the uniform random variables, so that at each good alternating interval of length $\kmark$ we will have $\kmark+1$ symbols, where the one \emph{extra} symbol can be possibly  distributed to integers that do not belong to a good alternating interval; the following lemma will be used to distribute the extra symbol.

\begin{lemma}[Matching]
\label{match}
Consider a  nonsingular Bernoulli shift that satisfies  the Doeblin condition and has a limiting measure.  Suppose each good alternating interval is assigned the colour green, an alternating interval of size $1$ assigned the colour red, and an alternating interval of size $\kmark$ that is not good assigned the colour maroon.     For $\kmark$ chosen sufficiently large, 
there is an equivariant (non-perfect)  matching of green to red and maroon intervals, where each red interval has $1$ green partner, and each maroon interval has $\kmark$ green partners.  
\end{lemma}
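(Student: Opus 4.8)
The plan is to recast Lemma~\ref{match} as an equivariant matching problem on $\Z$ that is driven by a large density gap, and then to quote the standard ``stable matching with surplus'' machinery. Conditioned on the alternating-interval partition $I(X)$ of Proposition~\ref{ball} and its colouring, let $\mathcal G$ be the point process on $\Z$ given by (say) the left endpoints of the green intervals, and let $\mathcal D$ be the multiset of \emph{demand units} obtained by placing one unit at each red interval and $\kmark$ distinct units at each maroon interval. A matching as in the statement is precisely an equivariant injection from $\mathcal D$ into $\mathcal G$: injectivity forces the $\kmark$ units of a maroon interval onto $\kmark$ distinct greens and the single unit of a red interval onto one green, while non-perfectness just reflects that greens are left over. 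So it suffices to produce, as a finitary (finite coding radius) equivariant function of $X$, such an injection with all edges of a.s.\ finite length.

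First I would prove the density comparison. By Proposition~\ref{KE-extract}(b), for all but finitely many $i$ a size-$\kmark$ alternating interval based near $i$ fails to be good with probability at most $e^{-\alpha\kmark}$, so the density of maroon intervals is at most a constant times $e^{-\alpha\kmark}/\kmark$, and hence their demand density ($\kmark$ units each) is at most a constant times $e^{-\alpha\kmark}$. For the size-$1$ (red) intervals I would argue directly from the Doeblin condition: a size-$1$ interval is either the right endpoint of a marker (that is, $2\kmark$ consecutive $a$'s followed by a $b$) or sits in the exceptional run produced by a length-$\kmark$ block of consecutive $a$'s, and under Doeblin each such configuration at a prescribed location has probability at most $(1-\delta)^{\kmark}$; hence the density of red intervals, and so their demand density, is at most a constant times $(1-\delta)^{\kmark}$. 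On the other hand, all but an exponentially small (in $\kmark$) fraction of $\Z$ lies in good size-$\kmark$ intervals, so the density of $\mathcal G$ is at least $(1-o(1))/\kmark$. Consequently, for $\kmark$ chosen large enough the total demand density, of order $e^{-\alpha\kmark}+(1-\delta)^{\kmark}$, is far smaller than the green density, of order $1/\kmark$. Using independence of the coordinates and the uniform-in-position tail estimates supplied by the Doeblin condition, a Borel--Cantelli argument promotes this to an almost sure statement: almost surely every sufficiently long integer interval contains at least twice as many green intervals as demand units.

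Next I would construct the matching. Join every demand unit to every green interval, weighting the edge by the distance between them, and take the \emph{stable matching} in which each demand unit prefers nearer green intervals and each green interval prefers nearer demand units (equivalently, run the greedy procedure that repeatedly matches the closest unmatched compatible pair, with ties broken by a fixed translation-invariant rule). This is a translation-equivariant function of $(I(X),\text{colours})$, hence of $X$. The almost sure density bound of the previous step shows that no interval around a demand unit can exhaust its supply of greens, so every demand unit is matched at a finite distance almost surely; quantitatively, the event that a demand unit at the origin is matched only beyond distance $L$ is contained in the event that some window of length of order $L$ around the origin is demand-heavy, whose probability is exponentially small, so the matching has an almost surely finite coding radius and is therefore finitary. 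This yields the asserted matching.

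I expect the main obstacle to be (i) making the density comparison rigorous and \emph{almost sure} in the absence of stationarity, which is exactly where the Doeblin condition is indispensable since it yields large-deviation bounds that are uniform in the coordinate index, and (ii) the slightly fiddly bookkeeping needed to bound the frequency of red intervals, because Ball's alternating-interval rule is not a plain i.i.d.\ marking; but every exceptional configuration involved (markers, maximal runs of $a$, and non-good blocks) has a tail exponentially small in $\kmark$, which is an overwhelming margin against the $1/\kmark$ green density, so once these estimates are in hand the existence and finitariness of the equivariant matching are exactly the standard stable-matching-with-surplus conclusion.
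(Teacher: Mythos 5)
Your proposal takes a genuinely different route from the paper's. The paper constructs a one-sided, stack-based \emph{Me\v{s}alkin} matching: in each round, any red or maroon interval whose right-neighbour is green grabs that green, matched greens and satisfied reds are removed (maroons persist until they have $\kmark$ partners), and the procedure is iterated. Termination is then reduced to a clean random-walk statement: the process $S_j^n$ tracking the running excess of greens over demand (with maroon demand weighted by $\kmark$) has a strictly positive drift, which is established via Proposition~\ref{KE-extract}(b) and a renewal-theory argument, yielding equation~\eqref{standard-LLN} and hence a.s.\ finite $Z_j$. You instead propose a bidirectional stable/greedy matching with demand units injected into greens and argue via a density gap plus Borel--Cantelli. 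Both approaches are sound in spirit, and your density estimates for red, maroon, and green intervals (exponential in $\kmark$ versus $\Theta(1/\kmark)$) are the right quantitative inputs. What the paper's one-sided Me\v{s}alkin construction buys you is a concrete, self-contained proof of termination that copes directly with the non-stationarity: the analysis reduces to a single one-sided ladder-time argument for a walk with asymptotically positive drift, with the finitely many exceptional indices from Proposition~\ref{KE-extract}(b) handled explicitly. Your route invokes ``stable matching with surplus'' machinery that is standard for translation-invariant point processes; in the nonsingular setting you would have to rebuild (not merely cite) the localization statement, and the Borel--Cantelli step needs the uniform-in-index tail bounds from Doeblin threaded through carefully, together with a translation-invariant tie-breaking and a precise specification of how the $\kmark$ demand units at a single maroon interval compete for greens. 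These are all resolvable, but they are exactly the bookkeeping that the Me\v{s}alkin construction avoids by design.
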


As in our previous constructions of nonsingular factors \cite{KosSoo20, KoSooFactors},  we will build upon a variant of the Me{\v{s}}alkin's  \cite{MR0110782} explicit  isomorphism of the stationary Bernoulli shifts $(\tfrac{1}{4},\tfrac{1}{4},\tfrac{1}{4},\tfrac{1}{4})$ and  $(\tfrac{1}{2},\tfrac{1}{8},\tfrac{1}{8},\tfrac{1}{8},\tfrac{1}{8})$, which is adapted from Holroyd and Peres \cite{MR2118858}. 

\begin{proof}[Proof of Lemma \ref{match}] The \dff{Me{\v{s}}alkin} matching has the following inductive description.  Let $W \in \ns{\text{red}, \text{maroon},\text{green}}^{\Z}$ be a random sequence of those colours; here think of $W$ as a colouring of the indexed alternating intervals $I$,  so that $W_i$ is the colour of $I_i$.     In the first instance,   if $W_n$ is red or maroon, and $W_{n+1}$ is green, then we match $n$ to $n+1$; that is, a red or maroon integer is matched to a green integer  that is to its immediate right.  Next, we remove all red and green integers that have $1$ partner, and maroon integers that have $\kmark$ partners, and repeat.  The  matching is \dff{successful} if all red integers  have a green integer partner, and all maroon integers have $\kmark$ green partners; the green partners will always be to the right of their red or maroon partners.     Note that by definition the resulting matching is equivariant with respect to the left-shift, and when applied to the coloured alternating intervals will also give an equivariant matching.   It suffices to show, via an elementary random walk argument, that we can choose $\kmark$ sufficiently large so that the excess of green integers compared to red and maroon integers will ensure that the Me{\v{s}}alkin matching is successful almost surely, when applied to the indexed alternating intervals.

Let $\rho$ be a nonsingular  Bernoulli shift on $A$ that satisfies the Doeblin condition and has a limiting measure.    Let $X$ have law $\rho$.   Without loss of generality assume that the symbols  $g,\hata,g' \not \in A$.       
Let $I(X)=(I_m)_{m \in \Z}$ be the alternating intervals.  If $I_m$ is of size $\kmark$, then let $J_m = g$ if it is good, $J_m=\hata$ if $X\ronn_{I_m} = a^\kmark$, and $J_m=g'$ if it is otherwise not good; 
if $I_m$ is of size $1$, let $J_m = X\ronn_{I_m}$.
By Proposition \ref{ball},   conditional on $I(X)$, the sequence $J_m$ is a non-homogeneous Markov chain on $A \cup \ns{g, \hata,g'}$ with transitions:
\begin{align*}
p_{gg}(t) &\geq  \min_{i \in \Z} \rho_i^{\inseq \kmark}( \mathcal{G}_\kmark ):=v_\kmark,\\
p_{g \hata}(t) + p_{g {g'}}(t)  &\leq 1- v_{\kmark}, \\
\sum_{c\in A}p_{\hata c}(t) &=1,\\
p_{g' g} & \geq v_{\kmark}, \\
p_{g' \hata} + p_{g' g'} & \leq 1 - v_{\kmark},	 \\
p_{aa}(t) & \leq \max_{i \in \Z} \rho_i(a) <1, \\
p_{ac}(t) & \geq 1-  \max_{i \in \Z} \rho_i(a) >0  \text{ for all } c \in A \setminus\ns{a},
\end{align*}
\begin{align*}
p_{cg}(t) & \geq v_{\kmark} \text{ for all } c \in A \setminus\ns{a}, \\
p_{cg'}(t) + p_{c \hata} & \leq 1- v_{\kmark}    \text{ for all }    c \in A \setminus\ns{a}.
\end{align*}
Thus only the state $g$ corresponds to a green interval, the other states all correspond to red or maroon intervals.  

For all $j<n$, let 
$$S_j^n:= \sum_{m=j}^ n \Big( \mathbf{1}[J_m=g]-\kmark\cdot\mathbf{1}{[J_m \in \{\hata, g'\}]} -\mathbf{1}[J_m \in A] \Big),$$
so that $S_j^n$ is an excess of the difference between green  intervals and red intervals with penalty $1$,  and maroon intervals, with penalty $\kmark$. 
%
Recall that by Proposition \ref{KE-extract} (b), the probability that an alternating interval of size  $\kmark$ is not good can be made exponentially  small and we   can replace $v_\kmark$, with a term $v'_{\kmark}$, such that   $\kmark\big(1-v'_{\kmark}\big)\to 0$ as $\kmark \to \infty$, at the cost  of the inequalities above  \emph{failing} for finitely many times $t \in \Z$.  A routine variation in a standard probabilistic argument in renewal theory  \cite{MR25098} gives that for $\kmark$ sufficiently large there exists $ 0 < C <1$ such that for any $j\ \in \Z$, we have 
\begin{equation}
\label{standard-LLN}
\P\Big( \liminf_{n \to \infty}\frac{1}{n-j+1}S_j^n \geq C \  | \  I(X) \Big) = 1.
\end{equation}
For each $j \in \Z$, if $I_j$ is red (or maroon) let $Z_j = n$ if $I_{n+j}$ is the (last $\kmark$) matched green interval under the  Me{\v{s}}alkin matching; if $I_j$ is green, set $Z_j=0$, and if $Z_j$ is red or maroon and  there are not enough green partners, then set $Z_j = \infty$.  
Let
$$ R_j = \inf\ns{ \ell\geq1: S_j^{j+\ell} >0}.$$
From the definition of the Me{\v{s}}alkin matching,
$$\P( Z_j >n \ | \ I(X) ) = \P( R_j >n \ | \ I(X) )$$
and the right hand side tends to zero by \eqref{standard-LLN}.  
Hence $Z_j$ is finite almost surely, and Me{\v{s}}alkin matching is successful almost surely. 
\end{proof}

\begin{proof}[Proof of Theorem \ref{near}]
Let $\varepsilon >0.$    Consider the following choice of parameter.
\begin{itemize}
\item
Let $\varepsilon' = \varepsilon/3$.     
\item
Choose $\kmark$ sufficiently large as to satisfy Lemma \ref{match} and Proposition \ref{KE-extract}, where in the notation of the proposition, $\e$ is replaced by $\e'$.
\item
Furthermore,  choose $\kmark$ sufficiently large  so that $$ H(p) \cdot \frac{\kmark}{\kmark+1} > H(p) - \varepsilon'.$$
\end{itemize}  

Let $X$ be a nonsingular Bernoulli shift with law $\rho$.    We define and identify markers, alternating intervals, and switches as in Section \ref{markers}.   Let $I(X)$ be the alternating intervals.   Let $\mathfrak{I} \subset I(X)$ be the good alternating intervals.    By Proposition \ref{ball}, it follows that conditioned on $\mathfrak{I}$, the random variables $(X \ronn_J)_{J \in \mathfrak{I}}$ are independent each with law $\rho_J(\cdot| \mathcal{G}_\kmark)$, given by  $\rho$ restricted to $J$ and conditioned to take values in $\mathcal{G}_\kmark$.

We apply Proposition \ref{KE-extract} to associate to each good alternating intervals  a single random variable  that up to  Kakutani equivalence, is uniformly distributed with entropy $$h >\kmark(H(p) - \e').$$  Furthermore, we apply Keane and Smorodinsky's  finitary factor \cite{keanea} to replace each uniform random variable by a  string of independent symbols from $B$, with law $q$,  of length $\kmark+1$, where $$(\kmark +1)H(q) > h - \e'.$$

Thus at each good alternating interval (of length $\kmark$) we have $\kmark +1$ symbols; the \emph{extra} symbol is distributed via the matching procedure given in Lemma \ref{match}, so that there is an independent symbol with law $q$  for every integer.    We disregard any remaining extra symbols that were not matched, and thus obtain an iid factor of entropy 
$$H(q) > \frac{h-\e'}{\kmark +1 }  >  (H(p) - \varepsilon') \cdot  \frac{\kmark}{\kmark +1 } - \e' > H(p) - \epsilon.$$    

We note that the external components involved in our construction: the  Me{\v{s}}alkin matching furnished by Lemma \ref{match} and the Keane and Smorodinsky factor are finitary.     Hence it follows, by definition, that our construction is also finitary.  
\end{proof}

\subsection{The proof of Proposition \ref{KE-extract}}

Generalizing our earlier discussion at the end of Section \ref{earlier-dis},  we note that the sufficient statistic for a finite number of iid observations of a categorical distribution  is given by the frequency counts of the types.  In what follows, we will use the theory of types to help prove Proposition \ref{KE-extract}.

\subsubsection{Types}

The \dff{empirical probability measure of $x\in A^k$}, denoted by $\emp(x)$, is the probability measure on $A$, given by
\[
\emp(x)(a)=\frac{1}{k}\sum_{j=1}^k \mathbf{1}[x_j =a] = \frac{\#\ns{j: x_j=a}}{k}.
\]
Given $k\in\Z^{+}$ we say that $p\in \PrA$ is of \dff{denominator $k$} 
if $kp(a)\in \N$ for all $a\in A$. The collection of probability distributions of denominator $k$ are precisely the ones which can arise as empirical probability measures for $x\in A^k$. 
For $p\in \PrA$ of denominator $k$, let $$\Ty_k(p)=\{x\in A^k: \emp(x)=p\}\subset A^k$$ be the \dff{$k$-type class} of $p$, which is all the sequences from $A$ of length $k$ which have an empirical measure that is equal to $p$.

We will use the following version of Proposition \ref{KE-extract} where we use conditioning to impose strict control over the types that can occur, so that a large deviations argument is not required.

\begin{proposition}\label{prop: Kakuu}
Suppose $\rho$ is  a nonsingular Bernoulli product measure that satisfies the Doeblin condition. Let $k\in\Z^{+}$.  Let  $p\in\PrA$ be of  denominator $k$ and $V\subset \Ty_k(p)$.  Recall that $\rho^{\inseq k}_{i}(\cdot|V)$ is the probability measure on $A^{k}$ given by taking $(\rho_i, \ldots, \rho_{i+k-1})$ conditioned to be on $V$.   Then for all $F\subset V$, we have
\begin{equation}
\label{KE-sum11}
\sum_{i \in \Z}  \left(  \rho^{\inseq k}_{i}(F|V) - \frac{\# F}{\#V} \right)^2 < \infty.
\end{equation}
\end{proposition}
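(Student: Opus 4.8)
The plan is to compare, on the type class, the measure $\rho^{\inseq k}_i$ with the genuine product measure $\rho^{\inprod k}_i=(\rho_i,\dots,\rho_i)$, for which the conclusion is trivial. Fix $p\in\PrA$ of denominator $k$, $V\subset\Ty_k(p)$ (which is nonempty, the statement being vacuous otherwise) and $F\subset V$. For any $x\in\Ty_k(p)$ we have $\rho^{\inprod k}_i(x)=\prod_{a\in A}\rho_i(a)^{kp(a)}$, a quantity depending only on $p$ and $i$, not on the particular $x$; hence $\rho^{\inprod k}_i(\cdot\,|V)$ is exactly the uniform measure on $V$ and $\rho^{\inprod k}_i(F|V)=\#F/\#V$. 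Writing $g_i(x):=\rho^{\inseq k}_i(x)/\rho^{\inprod k}_i(x)=\prod_{j=1}^k \rho_{i+j-1}(x_j)/\rho_i(x_j)$, so that $\rho^{\inseq k}_i(x)=g_i(x)\rho^{\inprod k}_i(x)$, the cancellation of the constant factor $\rho^{\inprod k}_i(x)$ over $x\in V$ gives $\rho^{\inseq k}_i(x|V)=g_i(x)/\sum_{y\in V}g_i(y)$. Set
\[
\eta_i:=\sum_{m=0}^{k-1}\max_{a\in A}\bigl|\log\rho_{i+m}(a)-\log\rho_i(a)\bigr|,
\]
so that $|\log g_i(x)|\le\eta_i$ for every $x\in A^k$, whence $g_i(x)/g_i(y)\in[e^{-2\eta_i},e^{2\eta_i}]$ for all $x,y$. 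Consequently $\rho^{\inseq k}_i(x|V)\in[\,e^{-2\eta_i}/\#V,\ e^{2\eta_i}/\#V\,]$ and therefore, summing over $x\in F$,
\[
\Bigl|\rho^{\inseq k}_i(F|V)-\tfrac{\#F}{\#V}\Bigr|\ \le\ \tfrac{\#F}{\#V}\bigl(e^{2\eta_i}-1\bigr)\ \le\ e^{2\eta_i}-1 .
\]
(Conditioning on $V$ is legitimate since the Doeblin condition forces $\rho^{\inseq k}_i(V)\ge\#V\cdot\delta^{k}>0$.) Thus \eqref{KE-sum11} will follow once $\sum_{i\in\Z}\bigl(e^{2\eta_i}-1\bigr)^2<\infty$.

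For the latter: only finitely many $i$ have $\eta_i>1$ (indeed at most $\sum_i\eta_i^2$ of them, once that sum is shown finite), and each such term contributes at most $1$ since the left side of the displayed inequality is a difference of two numbers in $[0,1]$; for the remaining $i$, $e^{2\eta_i}-1\le 2e^2\eta_i$, so $(e^{2\eta_i}-1)^2\le 4e^4\eta_i^2$. Hence it suffices to prove $\sum_{i\in\Z}\eta_i^2<\infty$. By the Doeblin condition $\rho_n(a)\in[\delta,1]$, on which $\log$ is $1/\delta$-Lipschitz, so by telescoping
\[
\bigl|\log\rho_{i+m}(a)-\log\rho_i(a)\bigr|\le\tfrac1\delta\,|\rho_{i+m}(a)-\rho_i(a)|\le\tfrac1\delta\sum_{\ell=0}^{k-2}|\rho_{i+\ell+1}(a)-\rho_{i+\ell}(a)| .
\]
Putting $\epsilon_n:=\max_{a\in A}|\rho_{n+1}(a)-\rho_n(a)|$ we get $\eta_i\le\frac{k}{\delta}\sum_{\ell=i}^{i+k-2}\epsilon_\ell$, hence by Cauchy--Schwarz $\eta_i^2\le\frac{k^3}{\delta^2}\sum_{\ell=i}^{i+k-2}\epsilon_\ell^2$, and summing over $i$ (each $\epsilon_\ell^2$ is counted at most $k$ times) $\sum_{i\in\Z}\eta_i^2\le\frac{k^4}{\delta^2}\sum_{\ell\in\Z}\epsilon_\ell^2$. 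Finally $\sum_{\ell}\epsilon_\ell^2\le\sum_{\ell\in\Z}\sum_{a\in A}(\rho_{\ell+1}(a)-\rho_\ell(a))^2<\infty$: since $\rho$ is nonsingular for the shift, $\rho$ and its image under the shift are equivalent product measures, both satisfying the Doeblin condition, so this follows from Kakutani's theorem \cite{MR23331} (equation \eqref{KE-thm}). This completes the argument.

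The one genuine idea here is the choice of reference measure: replacing $\rho^{\inseq k}_i$ by $\rho^{\inprod k}_i$ — equivalently, subtracting $\log\rho_i$ from every coordinate likelihood inside a block — is precisely what lets the type constraint do its work, since two elements of $\Ty_k(p)$ carry the same number of each symbol and so have identical $\rho^{\inprod k}_i$-mass. Without this normalization the difference of log-likelihoods between two elements of $\Ty_k(p)$ need not be small. Everything after that reduction is bookkeeping: the Lipschitz estimate from Doeblin, one telescoping, two applications of Cauchy--Schwarz, the trivial treatment of the finitely many blocks with $\eta_i$ not small, and Kakutani's summability criterion; note in particular that no large-deviations input is needed at this stage — that enters only in the passage from Proposition~\ref{prop: Kakuu} to Proposition~\ref{KE-extract}.
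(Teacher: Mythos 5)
Your proof is correct. The core idea is the same as the paper's: both arguments hinge on the observation that $\rho_i^{\inprod k}$ is constant on a type class, so that $\rho_i^{\inprod k}(\cdot\,|V)$ is uniform on $V$, and both ultimately derive summability from Kakutani's criterion applied to consecutive marginals $\rho_{\ell+1}-\rho_\ell$ via telescoping. Where you differ is in the technical packaging. The paper works \emph{additively}: it first proves a separate lemma that $\sum_i\bigl(\rho_i^{\inseq k}(c)-\rho_i^{\inprod k}(c)\bigr)^2<\infty$ for each word $c$, sums this over $V$, and then passes to conditional probabilities by an algebraic decomposition of $\rho_i^{\inseq k}(c|V)-\rho_i^{\inprod k}(c|V)$ bounded using the Doeblin condition. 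You work \emph{multiplicatively}: the likelihood ratio $g_i=\rho_i^{\inseq k}/\rho_i^{\inprod k}$ is pinned to $[e^{-\eta_i},e^{\eta_i}]$ uniformly in $x$, and since the constant factor $\rho_i^{\inprod k}(x)$ cancels in the conditional probability, you get the two-sided bound $\rho_i^{\inseq k}(x|V)\in[e^{-2\eta_i}/\#V,\,e^{2\eta_i}/\#V]$ in one step, reducing everything to $\sum_i\eta_i^2<\infty$. Your route handles the conditioning more transparently (no separate treatment of $\rho_i^{\inseq k}(V)$ versus $\rho_i^{\inprod k}(V)$ is needed, and the bound is uniform over all $F\subset V$ at once), at the cost of a log-scale Lipschitz estimate that again requires the Doeblin condition; the paper's additive lemma has the mild advantage of being stated for arbitrary words $c\in A^k$, not just within a type class. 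Your handling of the finitely many indices with $\eta_i>1$ (falling back on the trivial bound that a difference of probabilities is at most $1$) is a necessary step and you do it correctly.
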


Notice that if $\rho$ is an identical product measure, then Proposition \ref{prop: Kakuu} is not difficult since each summand is identically zero, see \eqref{Holroyd}.    The following lemma, which we will use to prove Proposition \ref{prop: Kakuu} connects this elementary observation to our nonsingular setting.


\begin{lemma}
\label{lem: Kakuuu}
Suppose $\rho$ is a nonsingular Bernoulli product measure that satisfies the Doeblin condition. For all ${c}\in A^k$, we have
\[
\sum_{i\in\Z}\left(\rho_i^{\inseq k}(c)-\rho_i^{\inprod k}(c)\right)^2<\infty.
\] 	
\end{lemma}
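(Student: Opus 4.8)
The plan is to reduce the whole statement to a single consequence of Kakutani's criterion: the shift-nonsingularity of $\rho$, together with the Doeblin condition, forces the marginals to stabilize in an $\ell^2$ sense. Indeed, nonsingularity means $\rho\sim\rho\circ S^{-1}$, and since both are product measures satisfying the Doeblin condition, \eqref{KE-thm} applied with $\mu_n=\rho_{n+1}$ and $\nu_n=\rho_n$ gives
\[
\sum_{n\in\Z}\sum_{a\in A}\big(\rho_{n+1}(a)-\rho_n(a)\big)^2<\infty.
\]
Writing $D_n(a):=|\rho_{n+1}(a)-\rho_n(a)|$, this is the only input about $\rho$ we will need, besides the uniform bounds $\delta\le\rho_n(a)\le 1$. (The point deserving a word of care is precisely this first step, namely passing from the square-root form of Kakutani's criterion to the squared-increment form; but under the Doeblin condition the two are equivalent because $\sqrt{\rho_{n+1}(a)}+\sqrt{\rho_n(a)}$ is bounded above and below, so there is no real obstacle.)

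Next I would estimate the summand pointwise in $i$ by two telescopings. Both $\rho_i^{\inseq k}(c)=\prod_{j=0}^{k-1}\rho_{i+j}(c_j)$ and $\rho_i^{\inprod k}(c)=\prod_{j=0}^{k-1}\rho_i(c_j)$ are products of $k$ numbers in $[\delta,1]$, so expanding $\prod a_j-\prod b_j$ as a telescoping sum and bounding each partial product by $1$ yields
\[
\big|\rho_i^{\inseq k}(c)-\rho_i^{\inprod k}(c)\big|\le\sum_{j=0}^{k-1}\big|\rho_{i+j}(c_j)-\rho_i(c_j)\big|.
\]
A second telescoping, $\rho_{i+j}(c_j)-\rho_i(c_j)=\sum_{m=i}^{i+j-1}(\rho_{m+1}(c_j)-\rho_m(c_j))$, bounds each marginal increment by a block of consecutive $D_m$'s, so altogether
\[
\big|\rho_i^{\inseq k}(c)-\rho_i^{\inprod k}(c)\big|\le\sum_{j=0}^{k-1}\sum_{m=i}^{i+k-2}D_m(c_j),
\]
a sum of at most $k(k-1)$ nonnegative terms, each of the form $D_m(a)$ with $m$ in the window $\{i,\dots,i+k-2\}$.

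The endgame is Cauchy--Schwarz followed by interchanging the order of summation. Squaring the previous bound and applying Cauchy--Schwarz to its (at most) $k(k-1)$ terms gives $|\rho_i^{\inseq k}(c)-\rho_i^{\inprod k}(c)|^2\le k(k-1)\sum_{j=0}^{k-1}\sum_{m=i}^{i+k-2}D_m(c_j)^2$. Summing over $i\in\Z$ and swapping the sums, for each fixed $j$ every index $m$ is counted for at most $k-1$ values of $i$, so $\sum_{i\in\Z}\sum_{m=i}^{i+k-2}D_m(c_j)^2\le(k-1)\sum_{m\in\Z}D_m(c_j)^2\le(k-1)\sum_{m\in\Z}\sum_{a\in A}D_m(a)^2$. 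Hence
\[
\sum_{i\in\Z}\big(\rho_i^{\inseq k}(c)-\rho_i^{\inprod k}(c)\big)^2\le k^2(k-1)^2\sum_{m\in\Z}\sum_{a\in A}D_m(a)^2<\infty,
\]
the constant depending only on the fixed length $k$, which is the assertion of the lemma. As noted, the construction is entirely elementary; the only genuinely substantive ingredient is the $\ell^2$-summability of the marginal increments, and that is handed to us directly by Kakutani's theorem via \eqref{KE-thm}.
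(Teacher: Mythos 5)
Your proof is correct and follows essentially the same path as the paper's: telescope the difference of products into a sum of single-marginal differences, square, apply Cauchy--Schwarz, sum over $i\in\Z$, and feed the result into Kakutani's criterion (rendered in squared-increment rather than square-root form, as the Doeblin condition permits). The one small divergence is cosmetic: where the paper invokes nonsingularity of the shift to conclude directly that $\sum_{i\in\Z}\sum_{a\in A}(\rho_{i+j}(a)-\rho_i(a))^2<\infty$ for each fixed lag $j$, you perform a second telescoping of $\rho_{i+j}(c_j)-\rho_i(c_j)$ into nearest-neighbour increments and then swap the order of summation, so that only the $j=1$ Kakutani input is ever used. That makes your write-up marginally more self-contained, at the cost of a slightly larger (but still $k$-dependent, hence harmless) constant; the substance of the two arguments is the same.
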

\begin{proof}
For all $c= (c_0, \ldots, c_{k-1}) \in A^k$, we have 
\begin{align*}
\rho^{\inseq k}_i(c)  &-  \rho_i^{\inprod k}(c) = \rho_i(c_0)\left(\prod_{j=1}^{k-1}\rho_{i+j}(c_j)-\prod_{j=1}^{k-1}\rho_{i}(c_j)\right)\\
&=\rho_i(c_0)\sum_{j=1}^{k-1} \left(\frac{\prod_{\ell=1}^j\rho_{i+\ell}(c_{\ell})}{\rho_{i+j}(c_j)}\right)\left(\rho_{i+j}(c_j)-\rho_i(c_j)\right)\left(\frac{\prod_{\ell=j}^{k-1}\rho_{i}(c_\ell)}{\rho_{i}(c_j)}\right).
\end{align*}
As $\rho$ satisfies the Doeblin condition, there exists $C>0$ such that for all $1\leq j\leq k-1$, we have
\begin{align*}
 \rho_i(c_0) \left(\frac{\prod_{\ell=1}^j\rho_{i+\ell}(c_\ell)}{\rho_{i+j}(c_j)}\right) &  \left|\rho_{i+j}(c_j) - \rho_i(c_j)\right|\left(\frac{\prod_{\ell=j}^{k-1}\rho_{i}(c_\ell)}{\rho_{i}(c_j)}\right)  \\
 &\leq C\left|\rho_{i+j}(c_j)-\rho_i(c_j)\right|.
\end{align*}
Consequently for all $i\in \Z$ and $c\in A^k$, we have
\[
\left(\rho_i^{\inseq k}(c)-\rho_i^{\inprod k}(c)\right)^2\leq C^2(k-1)^2 \sum_{j=1}^{k-1}\left(\rho_{i+j}(c_j)-\rho_i(c_j)\right)^2.
\]
As $\rho$ is nonsingular and satisfies the Doeblin condition, Kakutani's theorem implies that for all $m\geq 1$, we have
\[
A_m:=\sum_{i\in\Z}\sum_{j=1}^{m}\sum_{x\in A}\left(\rho_{i+j}(x)-\rho_i(x)\right)^2<\infty.
\]
Hence for all $c\in A^k$, we have
\begin{align*}
\sum_{i\in \Z}\left(\rho_i(c)-\rho_i^{\otimes k}(c)\right)^2&\leq C(k-1)^2\sum_{i\in\Z} \sum_{j=1}^{k-1}\left(\rho_{i+j}(c_j)-\rho_i(c_j)\right)^2\\
&\leq C(k-1)^2A_{k-1}<\infty. \qedhere
\end{align*}
\end{proof}
\begin{proof}[Proof of Proposition \ref{prop: Kakuu}]
Fix $V\subset \Ty(p)$ and $F\subset V$. A nice observation that is used in  \cite{MR2308235} is that for all $c,d\in \Ty_k(p)$, we have
\[
\rho_i^{\otimes k}(c)=\rho_i^{\otimes k}(d).
\]
Consequently for all $i\in \Z$, and ${c}\in V$, we have
\begin{equation}\label{Holroyd}
\rho_i^{\inprod k}({c} | V) = \frac{\rho_i^{\inprod k}({c})}{\rho_i^{\inprod k}(V)}=\frac{1}{\# V}.
\end{equation}
Since $V$ is a finite set it follows from Lemma \ref{lem: Kakuuu} that
\begin{align}
\label{lemma9-cons}
\sum_{i\in \Z}\left(\rho_i^{\inseq k}(V)-\rho_i^{\otimes k}(V)\right)^2&=\sum_{i\in \Z}\left(\sum_{{c}\in V} \rho_i^{\inseq k}({c})- \rho_i^{\inprod k}({c})\right)^2 \nonumber \\
&\leq (\#V)^2\sum_{{c}\in V}\sum_{i\in \Z}\left(\rho_i^{\inseq k}(c)-\rho_i^{\inprod k}(c)\right)^2<\infty. 
\end{align}

Fix ${c}\in V$. By the Doeblin condition for $\rho$, there exists $D>0$ such that for all $i\in\Z$, we have
\begin{align*}
\big[  \rho^{\inseq k}_{i}(c|V)  &- \rho_i^{\inprod k}({c} | V) \big]^2 = \\
& \left( \rho_{i}^{\inseq k} ({c})\left(\frac{1}{\rho_i^{\inseq}(V)}-\frac{1}{\rho_i^{\inprod k}(V)}\right)+ \frac{\rho_i^{\inseq k}({c})-\rho_i^{\inprod k}({c})}{\rho_i^{\inprod k}(V)}\right)^2\\
& \leq 4D \left[ \left(\rho_i^{\inseq k}(V)-\rho_i^{\inprod k}(V)\right)^2+\left(\rho_i^{\inseq k}({c})-\rho_i^{\inprod k}({c})\right)^2\right].
\end{align*}

Hence from \eqref{lemma9-cons} and  \eqref{Holroyd} we have,
\[
\sum_{i \in \Z} \left(  \rho^{\inseq k}_{i} ({c}|V) - \frac{1}{\# V} \right)^2<\infty,
\]
from which the desired result is immediate:
\begin{align*}
\sum_{i \in \Z} \left(  \rho^{\inseq k}_{i} (F|V) - \frac{\# F}{\# V} \right)^2&=\sum_{i \in \Z} \left( \sum_{{c}\in F}\left(  \rho^{\inseq k}_{i} (c|V) - \frac{1}{\#V}\right) \right)^2\\
& \leq (\#V)^2 \sum_{{c}\in F} \sum_{i \in \Z}  \left(  \rho^{\inseq k}_{i} (c|V) - \frac{1}{\#V} \right)^2<\infty.
\end{align*}
\end{proof}

\subsubsection{Sanov's theorem and the set $\mathcal{G}_k$ in Proposition \ref{KE-extract}}

In order to use Proposition \ref{prop: Kakuu}, we will need to introduce some results from large deviations theory.    Let $\delta >0$.  
%
%
%
Recall that the  \dff{total variation distance} between $q_1,q_2\in\PrA$ is defined by
\begin{equation}
\label{total-variational}
\dTV(q_1, q_2):=\sum_{a\in A}\left|q_1(a)-q_2(a)\right|;
\end{equation}
we will endow $\Prob(A)$ with this metric.   Recall that  $p=\lim_{|n|\to\infty}\rho_n$. For $k\in\mathbb{N}$, let 
\begin{equation}
\label{def:Udelta}
U(\delta):=\left\{q\in \PrA: \dTV(q,p)<\delta\right\}
\end{equation}
and
\[
\hat{\mathcal{G}}_{k,\delta}:=\left\{{x}\in A^k: \dTV(\emp(x), p)<\delta \right\}=\bigcup_{q\in U(\delta)} \Ty_k(q).
\]
Recall that the  \dff{Kullback-Leibler divergence} between $p,q\in\PrA$ is defined by 
\[
\DKL(q||p):=\sum_{a\in A}p(a)\log\left(\frac{p(a)}{q(a)}\right).
\]
The function $q\mapsto D(q||p)$ is a continuous function from $$\left\{q\in\PrA: q\ll p\right\} \text{ to } [0,\infty),$$
where $q\ll p$ means that $q$ is absolutely continuous with respect to $p$.  

The following is an adaptation of standard results on concentration of measure and well-known bounds of the method of types; see for example \cite[Chapter 11]{MR2239987}.
\begin{lemma}\label{lem: concentration}
For every $\varepsilon>0$ there exists $\delta>0$,  and a positive integer $k_0\in\Z^{+}$ and $\beta>0$ such that for all $k>k_0$:
\begin{enumerate}[(a)]
	\item 
	\label{a-ineq}
	If  $\dTV(q,p)<\delta$ and $q$ is of denominator $k$, then $\#\Ty_k(q)\geq 2^{\left(H(p)-\frac{\varepsilon}{2}\right)k}$.
		\item 
		\label{b-finite}
		For all but finitely many $n\in\Z$, we have 
		$\rho_n^{\inseq k}\left(\hat{\mathcal{G}}_{k,\delta}\right)\geq 1-e^{-\beta k}$.
\end{enumerate} 
\end{lemma}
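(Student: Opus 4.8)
The plan is to prove parts (a) and (b) by separate, essentially textbook, arguments, and then combine them by taking the smaller of the two values of $\delta$, the larger of the two thresholds $k_0$, and the $\beta$ coming from (b). Part (a) is the easy half. By the method-of-types counting bound \cite[Chapter 11]{MR2239987}, if $q\in\PrA$ has denominator $k$ then $\#\Ty_k(q)\geq (k+1)^{-|A|}2^{kH(q)}$. Since $A$ is finite, $q\mapsto H(q)$ is continuous on the compact space $\PrA$, hence uniformly continuous, so I would choose $\delta>0$ small enough that $\dTV(q,p)<\delta$ forces $H(q)>H(p)-\tfrac{\varepsilon}{4}$. Then $\#\Ty_k(q)\geq (k+1)^{-|A|}2^{k(H(p)-\varepsilon/4)}$, and choosing $k_0$ so large that $(k+1)^{-|A|}\geq 2^{-k\varepsilon/4}$ for every $k>k_0$ gives $\#\Ty_k(q)\geq 2^{k(H(p)-\varepsilon/2)}$, which is (a).

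For (b) I would combine the convergence $\rho_n\to p$ with a concentration estimate. Set $\eta:=\delta/2$ and pick $N$ with $\dTV(\rho_n,p)<\eta$ whenever $|n|>N$; then for every $n$ outside the finite window $\{m\in\Z:\ -N-k+1\leq m\leq N\}$, every index $j\in\{n,\dots,n+k-1\}$ satisfies $|j|>N$. Fix such an $n$ and let $x=(x_n,\dots,x_{n+k-1})$ have law $\rho_n^{\inseq k}$. The coordinates are independent, $\emp(x)(a)=\tfrac1k\sum_{j}\mathbf{1}[x_j=a]$ has mean $\tfrac1k\sum_{j}\rho_j(a)$, and $\sum_{a\in A}\left|\E[\emp(x)(a)]-p(a)\right|\leq\tfrac1k\sum_{j}\dTV(\rho_j,p)<\eta$. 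Applying Hoeffding's inequality for each $a\in A$ and a union bound, with probability at least $1-2|A|e^{-k\delta^2/(2|A|^2)}$ one has $\left|\emp(x)(a)-\E[\emp(x)(a)]\right|<\delta/(2|A|)$ for all $a$; the triangle inequality for $\dTV$ then gives $\dTV(\emp(x),p)<|A|\cdot\tfrac{\delta}{2|A|}+\eta=\delta$, i.e.\ $x\in\hat{\mathcal{G}}_{k,\delta}$. Hence $\rho_n^{\inseq k}(\hat{\mathcal{G}}_{k,\delta})\geq 1-2|A|e^{-k\delta^2/(2|A|^2)}$, and setting $\beta:=\delta^2/(4|A|^2)$ and enlarging $k_0$ so that $2|A|\leq e^{k\beta}$ for $k>k_0$ produces the claimed bound $1-e^{-\beta k}$.

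I do not expect a genuine obstacle: as the authors indicate, this is an adaptation of standard results, and the only real work is bookkeeping --- using one choice of $(\delta,k_0,\beta)$ for both parts, and observing that the finite exceptional set of indices $n$ in (b) is permitted to depend on $k$, which is harmless since only ``all but finitely many $n$'' is required. If a more ``Sanov-flavoured'' derivation of (b) is preferred, one can instead couple $x\sim\rho_n^{\inseq k}$ coordinatewise with $y\sim p^{\otimes k}$ via maximal couplings, bound $\#\{j:\ x_j\neq y_j\}$ (whose mean is $\sum_j\dTV(\rho_j,p)<k\eta$, with fluctuations again controlled by Hoeffding), note that $\dTV(\emp(x),\emp(y))\leq\tfrac2k\#\{j:\ x_j\neq y_j\}$, and apply the upper bound in Sanov's theorem \cite[Chapter 11]{MR2239987} to the genuinely i.i.d.\ sequence $y$; this yields the same conclusion with a slightly worse constant in the exponent, which is immaterial.
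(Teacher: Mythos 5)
Your proof is correct. Part (a) is the paper's argument verbatim: the method-of-types lower bound $\#\Ty_k(q)\geq (k+1)^{-|A|}2^{kH(q)}$ plus continuity of $q\mapsto H(q)$, with the polynomial prefactor absorbed for $k$ large. For part (b) you take a genuinely different, somewhat more elementary, route. The paper first proves the concentration estimate for the limiting iid measure $p^{\otimes k}$ via Sanov's theorem, with $2\beta=\min_{q\notin U(\delta)}\DKL(q||p)$, and then transfers it to $\rho_n^{\inseq k}$ for large $|n|$ using the pointwise likelihood-ratio bound $\rho_n^{\inseq k}(x)\leq 2 p^{\otimes k}(x)$ (displayed as \eqref{proof-use}). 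You instead apply Hoeffding's inequality coordinatewise to the independent-but-nonidentical coordinates of $\rho_n^{\inseq k}$ directly, with no detour through $p^{\otimes k}$; your exceptional set of indices $n$ is described explicitly, and your observation that it is allowed to depend on $k$ is correct. One thing to note: the bound \eqref{proof-use} that the paper establishes along the way is reused downstream in the proof of Proposition \ref{KE-extract} (to control $\rho_n^{\inseq k}(\hat{\mathcal{G}}_{k,\delta}\setminus\mathcal{G}_k)$), so the paper's detour is not wasted work; were you to substitute your proof of this lemma, you would still need that inequality (or a replacement) for the later application. Your Hoeffding exponent $\beta\propto\delta^2$ is generally smaller than the paper's KL-based exponent by Pinsker's inequality, but since only positivity of $\beta$ is used this is immaterial.
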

\begin{proof}
By \cite[Theorem 11.1.3]{MR2239987}, for all $q\in \PrA$ and $k\in\Z^{+}$, if $q$ has denominator $k$, then
\[
\#\Ty_k(q)\geq \frac{1}{(k+1)^{|A|}}2^{kH(q)}.
\]
Since the entropy map  $q\mapsto H(q)$ is continuous, there exists $\delta>0$ such that if  $\dTV(q,p)<\delta$, then $H(q)> H(p)-\frac{\varepsilon}{3}$.  Let $k_1$ be such that if $k>k_1$, then $(k+1)^{|A|}<e^{\frac{\varepsilon}{6}k}.$
Hence, if $k>k_1$ and $\dTV(q,p)<\delta$, then $$\#\Ty_k(q)\geq 2^{\left(H(p)-\frac{\varepsilon}{2}\right)k},$$ establishing part \eqref{a-ineq}. 

The set $K:=\PrA\setminus U(\delta)$ is compact so that  
$$2\beta:=\min_{q\in K}\DKL(q||p)>0.$$ 
By Sanov's theorem \cite{sanov}, for every $k\in\Z^{+}$, we have
\[
p^{\inprod k}\left(A^k\setminus \hat{\mathcal{G}}_{k,\delta}\right)=p^{\inprod k}\left({x}\in A^k:\ \emp(x) \in K\right)\leq (k+1)^{|A|}e^{-2k\beta}. 
\]
Choose a positive integer $k_0\geq k_1$ such that for all $k\geq k_0$, we have
\begin{equation}\label{eq: Sanov}
p^{\otimes k}\left(A^k\setminus \hat{\mathcal{G}}_{k,\delta}\right)\leq \frac{e^{-\beta k}}{2}.
\end{equation}
Let $C:=\min_{a\in A}p(a)$. For every $n\in \Z$, and every  $k\geq k_0$ and $x\in A^k$, we have
\begin{align*}
\rho_n^{\inseq k}(x)=\prod_{j=n}^{n+k-1}\rho_j\left(x_j\right)&=\prod_{j=n}^{n+(k-1)}p\left(x_j\right)\left(1+\frac{\rho_j\left(x_j\right)-p\left(x_j\right)}{p\left(x_j\right)}\right)\\
&\leq p^{\inprod k}(x)\prod_{j=n}^{n+k-1}\left(1+\frac{\rho_j\left(x_j\right)-p\left(x_j\right)}{C}\right).
\end{align*}
Since $$\lim_{|n|\to\infty}\max_{x\in A^k}\prod_{j=n}^{n+k-1}\left(1+\frac{\rho_j\left(x_j\right)-p\left(x_j\right)}{C}\right)=1,$$  
for all but finitely many $n\in\Z$, we have
\begin{equation}
\label{proof-use}
\max_{x \in A}\left(\fracc{\rho_n^{\inseq k}(x)}{p^{\otimes k}(x)} \right)\leq 2.
\end{equation}
Hence with  \eqref{eq: Sanov},  we obtain that for all but finitely many $n\in\Z$, we have
\[
\rho_n^{\inseq k} \left(A^k\setminus \hat{\mathcal{G}}_{k,\delta}\right)\leq 2p^{\otimes k}\left(A^k\setminus \hat{\mathcal{G}}_{k,\delta}\right)\leq e^{-\beta k}.
\] 
as desired for part \eqref{b-finite}. 
\end{proof}

We now  combine  Proposition \ref{prop: Kakuu} with Lemma \ref{lem: concentration} to obtain Proposition \ref{KE-extract}.    Recall the role of  Proposition \ref{KE-extract} in  the proof of Theorem \ref{near} was to extract, using a single procedure,  (up to Kakutani equivalence) an iid sequence of sufficiently high entropy (discrete uniform) random variables from the good alternating intervals.   Lemma \ref{lem: concentration} gives that for $k$ sufficiently large most $k$-type classes will be large enough so that the random variable corresponding to picking an element from such a type class will have sufficiently high entropy.  Although there is a uniform lower bound on the sizes of the type classes, they vary.
Part of our proof of  Proposition \ref{KE-extract} will involve \emph{chopping} a type class up into equal sets of the desired size; exerting  further control over the sizes helps to prove that the resulting random variables are stationary.  See the proof below for details.  

\begin{proof}[Proof of Proposition \ref{KE-extract}]
Let $\e >0$.  Recall that $p$ is the limiting measure.     Choose $\delta$ and $k_0$ as in Lemma \ref{lem: concentration}  and  
$\kmark:=\max\big(k_0,\lceil\tfrac{2}{\varepsilon}\rceil\big).$ 
For all $k\geq \kmark$ set $B_k=\left\{1,\ldots,2^{\lceil k(H(p)-\varepsilon)\rceil  }\right\}$. We will first construct $\mathcal{G}_k$, and then define the mapping $\psi:\mathcal{G}_k\to B_k$. 
Set
$$U_k'(\delta):=   \ns{ q \in U(\delta):   q \text{ is of denominator $k$ and }  \Ty_k(q) \neq\emptyset},$$
where $U(\delta)$ is as given in \eqref{def:Udelta}.
 Let $q\in U'_k(\delta)\subset \PrA$. By Lemma \ref{lem: concentration}, since $k\geq \tfrac{2}{\epsilon}$, we have that $\#\Ty_k(q)>\#B_k$. Set $$m(q,k):=\left\lfloor \tfrac{\#\Ty_k(q)}{\#B_k}\right\rfloor.$$  
 For each $q\in U'_k(\delta)$, let  $F_k(q)$ be a fixed subset of $\Ty_k(q)$ of cardinality $m(q,k)(\#B_k)$; here we can think of \emph{chopping} up the set $\#\Ty_k(q)$ into $m(q,k)$ portions, stacked upon each other, and discarding away the rest.   Let
 \begin{equation}
 \label{GK}
 \mathcal{G}_k:=\biguplus_{q\in U'_k(\delta)}F_k(q)\subset \hat{\mathcal{G}}_{k,\delta}
 \end{equation}
 be given by a disjoint union.

In order to define $\psi:\mathcal{G}_k\to B_k$, note that since for all $q$ in the disjoint union \eqref{GK}, the cardinality of $F_k(q)$ is an integer multiple of the cardinality of $B_k$,  we  choose a $m(q,k)$-to-$1$ mapping  $\psi|_{F_k(q)}:F_k(q)\to B_k$ such that for all $c\in B_k$, we have 
\begin{equation}
\label{eq: psi in prop 6}
\frac{\#\left\{x\in F_k(q):\ \psi(x)=c \right\}}{\#F_k(q)}=\frac{1}{\#B_k};
\end{equation}	
putting together these choices, we obtain the desired map $\psi$.

We now verify the conditions (a),(b), and (c) of the proposition.  Condition  (a) holds by our choice of $B_k$. 

Now we verify that we did not chop too much away.  By construction, for all $q\in U'_k(\delta)$, by Lemma  \ref{lem: concentration} (a), we have
\begin{align}
\label{eq: F_k(q)}
\frac{p^{\inprod k}\left(\Ty_k(q)\setminus F_k(q)\right)}{p^{\inprod k}(\Ty_k(q))}&=\frac{\#(\Ty_k(q)\setminus F_k(q))}{\#\Ty_k(q)} \nonumber\\
&\leq \frac{\#B_k}{\#\Ty_k(q)}\ \leq\  e^{-\frac{\varepsilon k}{2}}.
\end{align}
In addition, from the proof of Lemma \ref{lem: concentration}, specifically inequality \eqref{proof-use},  for all but finitely many $n\in \Z$, for all $V\subset A^k$, we have 
$$\frac{1}{2}p^{\otimes k}(V)\leq \rho^{\inseq k}_n(V)\leq 2p^{\otimes k}(V).$$  
Consequently, for all but finitely many $n\in\Z$, we have
\begin{align*}
& \frac{\rho_n^{\inseq k}\left(\hat{\mathcal{G}}_{k,\delta}\setminus \mathcal{G}_k\right)}{\rho_n^{\inseq k}\left(\hat{\mathcal{G}}_{k,\delta}\right)} \leq 4 \cdot \frac{p^{\inprod k}\left(\hat{\mathcal{G}}_{k,\delta}\setminus \mathcal{G}_k\right)}{p^{\inprod k}\left(\hat{\mathcal{G}}_{k,\delta}\right)}  \\
&=\frac{4}{p^{\inprod k}\left(\hat{\mathcal{G}}_{k,\delta}\right)}\sum_{ {q\in U(\delta)} \atop {\Ty_k(q)\neq \emptyset} }\frac{p^{\inprod k}\left(\Ty_k(q)\setminus F_k(q)\right)}{p^{\inprod k}(\Ty_k(q))}p^{\inprod k}(\Ty_k(q)), \\
& \text{and by \eqref{eq: F_k(q)}, we obtain}  \\
&\leq \frac{4}{p^{\inprod k}\left(\hat{\mathcal{G}}_{k,\delta}\right)}\sum_{ {q\in U(\delta)} \atop {\Ty_k(q)\neq \emptyset} } e^{-\frac{\varepsilon k}{2}}p^{\inprod k}(\Ty_k(q)) \\
&\leq 4e^{-\frac{\varepsilon k}{2}}.
\end{align*}
Hence it follows from Lemma \ref{lem: concentration} (b) that  there exists $\beta >0$ such that for all but finitely many $n\in\Z$, we have
\[
\rho_n^{\inseq k} \left(A^k\setminus \mathcal{G}_k\right)=\rho_n^{\inseq k}\left(A^k\setminus \hat{\mathcal{G}}_{k,\delta}\right)+\rho_n^{\inseq k}\left(\hat{\mathcal{G}}_{k,\delta}\setminus \mathcal{G}_k\right)\leq e^{-\beta k}+4e^{-\frac{\varepsilon k}{2}}.
\]
By enlarging $\kmark$ if necessary, property (b) in Proposition \ref{KE-extract} holds for any $\alpha<\min(\beta,\frac{\varepsilon}{2})$. 

We will now prove that property (c) holds. 
%
Fix $c\in B_k$. Since $B_k$ is a finite set, it suffices to show that
\[
\sum_{i\in\Z} \left(  \rho^{\inseq k}_{i}( \psi^{-1} ({c})  | \mathcal{G}_k   ) - \frac{1}{\# B_k} \right)^2<\infty.
\]
 By Proposition \ref{prop: Kakuu} for all $q\in U'_k(\delta)$, we have with \eqref{eq: psi in prop 6} that
\begin{equation}\label{eq: kakkakkak}
\sum_{i\in\mathbb{Z}}\left[\rho^{\inseq k}_{i}\left(  \psi^{-1} ({c}) \cap  F_k(q) |  F_k(q)\right)  - 
\frac{1}{\#B_k}
\right]^2<\infty. 
\end{equation}
Recall by \eqref{GK}, by definition,  $$\sum_{q\in U'_k(\delta)}\frac{\rho_i^{\inseq k} \left(F_k(q)\right)}{\rho_i ^{\inseq k}(\mathcal{G}_k)}=1$$ 
and by \cite[Theorem 11.1.1]{MR2239987} or elementary counting, we have 
$$\#U'_k(\delta)\leq (k+1)^{|A|}.$$   
Thus for   all ${c}\in B_k$ and $i\in\mathbb{Z}$, we have 
\begin{align*}
&  \left(  \rho^{\inseq k}_i ( \psi^{-1} ({c})  | \mathcal{G}_k   ) - \frac{1}{\# B_k} \right)^2 \\
&= \left(\sum_{q\in U_k'(\delta)}  \frac{\rho_i^{\inseq k} \left(F_k(q)\right)}{\rho_i ^{\inseq k} (\mathcal{G}_k)}    \left[\rho^{\inseq k}_{i}\left(  \psi^{-1} (\bar{c}) \cap  
F_k(q) |  F_k(q)\right)  - \frac{1}{\#B_k}\right] \right)^2  \\
 &\leq (k+1)^{|A|}\sum_{q\in U'_k(\delta)} \left(\frac{\rho_i\left(F_k(q)\right)}{\rho_i(\mathcal{G}_k)}\right)^2 \left[\rho^{\inseq k}_{i}\left(  \psi^{-1} (\bar{c}) \cap  
F_k(q) |  F_k(q)\right)  - \frac{1}{\#B_k}\right] ^2\\
 &\leq (k+1)^{|A|}\sum_{q\in U'_k(\delta)} \left[\rho^{\inseq k}_{i}\left(  \psi^{-1} (\bar{c}) \cap  
F_k(q) |  F_k(q)\right)  - \frac{1}{\#B_k}\right] ^2.
\end{align*}
Hence summing over both sides of the inequality in the index $i\in \Z$, and applying  \eqref{eq: kakkakkak},  we obtain condition (c).
%
%
%
\end{proof}

\subsection{The proof of Corollary \ref{countable}}
\label{section-countable}

Let $A$ be a countable set.  Recall  that  a product measure $\rho=\bigotimes_{n\in\Z}\rho_i$ on $A^\Z$ has a limiting measure $p$ if $\rho_i$ converges to $p$ in the usual total variation distance; see \eqref{total-variational}.   

\begin{proof}[Proof of Corollary \ref{countable}]
Let $\rho$ be a nonsingular Bernoulli shift on a possibly countable set $A$ that has a limiting measure $p$.

We start with removing the Doeblin assumption for  the case of a  finite set $A$. 
\begin{equation}
\label{uniform-posss}
\text{Assume that for all $a\in A$, we have $p(a)>0$. }
\end{equation}
Since 
$\lim_{|i|\to\infty}\dTV(\rho_i, p)=0$,
there exists $N\in\Z^{+}$ such that for all $|i| \geq N$, we have
\[
\rho_i(a)>\frac{1}{2}\min_{a\in A}p(a)=\delta'.
\] 
Since $\rho$ is nonsingular, for all $i\in\Z$ and $a\in A$, we have $\rho_i(a)>0$. Hence the Doeblin condition is satisfied with
$$\delta:=\min\left(\min \ns{p_i(a):a\in A,|i|\leq N},\delta'\right),$$
and  Theorem \ref{near} applies.

Otherwise set $B=\{a\in A:\ p(a)>0 \}$. If $B$ is a singleton, then $H(p)=0$ and the Theorem is vacuously true.    Suppose that $B$ has more than one element.  Fix $b\in B$ and let  $\pi:A^\Z\to B^\Z$ be given by
\[
\pi(x)_j=\begin{cases}
b, &\ \text{if}\ x_j\in\{b\}\biguplus (A\setminus B),\\
x_j, &\ \text{if}\  x_j\in B\setminus\{b\}.
\end{cases}
\]
Since the value of $\pi(x)_j$ just depends on the coordinate $x_j$, it is easy to see that $\pi$ is a  finitary factor map from $\rho$ to $\nu:=\rho\circ \pi^{-1}$; furthermore $\nu$ is a product measure on $B^\Z$ with limiting marginal $p|_B$, and  $H(p|_B) = H(p)$.   Thus condition \eqref{uniform-posss} holds for $\nu$, and we already know it has near optimal entropy finitary iid factors, from which it follows by composition with $\pi$ that same holds for $\rho$.

Now with the Doeblin condition removed for the case of a finite set $A$, we consider the  countable case, where for concreteness we take $A=\N$.   The following cut-off functions will allow us to apply the result for the case of a finite set, established earlier.  For each $n \in \N$, consider  $\theta_n = \N \to \ns{0, \ldots, n}$ given by
$$ \theta_n(k)=\theta(k) = \min\ns{k,n},$$
and
$\Theta^n: \N^{\Z} \to  \{0,\ldots,n\}^\Z$ given by
\[
\Theta^n(x)_j=\Theta(x)_j=\theta(x_j) = \min\ns{x_j, n}.
\]
Clearly, $\Theta$ is a finitary factor map from $\rho$ to $\nu:=\rho\circ \Theta^{-1}$ and $\nu$ is a product measure on  $\{0,\ldots,n\}^\Z$ with  limiting measure $p\circ \theta_n^{-1}$; moreover, $H(p\circ \theta_n^{-1}) \to H(p)$, as $n \to \infty$.  Thus choosing $n$ finite and sufficiently large we again obtain   near optimal entropy finitary iid factors by composing $\Theta$ with the finitary factor that we obtained in the finite case.
\end{proof}

\section{Upper bounds on the entropy of a finitary factor}
\label{kalikow-section}

\subsection{Entropy rates for for bounding the  entropy of symbolic factors}

The next theorem is a mathematical abstraction of Theorem \ref{bound-Kalikow} that will allow for applications to Anosov diffeomorphisms and Bernoulli shifts.   

Let $A$ be a finite set.    For $x\in A^\Z$ and $n\in\Z^{+}$, we write for $M<n$ the set, 
$$[x]_M^n=\{y\in A^\Z:\ y_k = x_k \text{ for all } M \leq k \leq n\}$$ 
for the unique $[M,n]$-cylinder set  containing $x$.   Let  $\mu$ be a Borel regular measure on $A^\Z$ which is nonsingular with respect to the left-shift.  It will be convenient to use the language of random variables.   For example, recall that if $Z$ is a  discrete random variable or vector, then its Shannon entropy is given by
$$ H(Z) =     -\sum_{a} \P(Z=a)\log \P(Z=a).$$
Let  $(n_k)$ be a subsequence of  $\Z^{+}$ and $X \in A^{Z}$ be a random variable with law $\mu$, so that $\P(X \in \cdot) = \mu(\cdot)$.  
The \dff{lower entropy rate of the measure $\mu$ with respect to $(n_k)$} is given by
\[
h(\mu,(n_k)):=\liminf_{k\to\infty}\frac{H(X_1,\ldots,X_{n_k})}{n_k}.
\]
We recall that in the measure-preserving case, we may take $n_k = k$ and  subadditivity ensures the actual limit exists and is the Kolmogorov-Sinai entropy.  We will be able to use the lower entropy rate as a substitute for Kolmogorov-Sinai entropy in the nonsingular setting, provided the existence of certain physical proxies for $\mu$, which are akin 
to SRB measures.     

We say measure $\muphys$ on $A^\Z$ is \dff{a mean-physical measure for $\mu$ with respect to}  $(n_k)$, if for every cylinder set $C\subset A^\Z$, we have
\[
\lim_{k\to\infty}\frac{1}{n_k}\sum_{k=0}^{n_k-1}\mu\left(T^{-k}C\right)=\muphys(C).
\]
When $n_k=k$ we simply say that $\muphys$ is a \dff{mean-physical measure for $\mu$}.  We say  $\muphys$ is a \dff{physical measure for $\mu$} if for $\mu$-almost every $x\in A^\Z$, we have 
\[
\frac{1}{n}\sum_{k=0}^{n-1}\delta_{T^n x}\xrightarrow[n\to\infty]{}\muphys,\ \text{weakly}, 
\]
 where $\delta_y$ is the usual point mass giving unit mass to a set containing the point $y$ and zero mass otherwise. 
The portmanteau theorem  \cite[Theorem 2.4, page 87]{Durrett2} implies  that for every cylinder set $C\subset A^\Z$ and for $\mu$-almost every $x \in A^{\Z}$, we have
\[
\lim_{n\to\infty}\frac{1}{n}\sum_{k=0}^{n-1}1_C\circ T^k(x) = \muphys(C).
\]
It is easy to see that every physical measure is a mean-physical measure, and every mean-physical measure is shift-invariant.

\begin{theorem}
\label{Kalikow general}
	Let $\left(A^\Z,\B,\mu,T\right)$ be a nonsingular symbolic system.  If there exists $\muphys$, a mean-physical measure for $\mu$ with respect to a subsequence  $(n_k)$, then every iid system that is obtained as a finitary factor of $\mu$ has entropy no greater than $h(\mu, (n_k))$, the lower entropy rate of $\mu$ with respect to $(n_k)$.
\end{theorem}

 The following lemma will allow us to transfer the finitary assumption into a form involving finite union of cylinder sets, which will be useful in the proof of Theorem \ref{Kalikow general}.

\begin{lemma}
	\label{prop: al iz well}
	Let $\left(A^\Z,\B,\mu,T\right)$ be a nonsingular symbolic dynamical system, $\pi:A^\Z\to B^\Z$ a finitary nonsingular factor map from $\mu$ to $q^\Z$ and $\{B_s:\ s\in B\}$ be the partition of $B^\Z$ according to the zeroth coordinate. For every continuous probability measure $\nu$ on $A^\Z$ and $\epsilon>0$, there exists $\{C_s\}_{s\in B}$ and $C_\emptyset$ such that: 
	\begin{enumerate}[(a)]
		\item 
		For each $s\in B$,  the set $C_s\subset \B$ is a finite union of cylinder sets of $A^\Z$; the set $C_{\emptyset}$ is a complement of a finite union of cylinder sets.
		\item
		The sets $\left\{C_s\right\}_{s\in B\cup \{\emptyset\}}$ form a partition of $A^\mathbb{Z}$.
		\item
		For all $s\in B$,  we have $\mu\left(C_s \setminus \pi^{-1}B_s\right)=0$. 
		\item
		$\mu\left(C_\emptyset\triangle\biguplus_{s\in B} \left(\pi^{-1}B_s\setminus C_s\right)\right)=0$ and $\nu(C_\emptyset)<\epsilon$.
	\end{enumerate}
\end{lemma}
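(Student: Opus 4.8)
The plan is to use the finitary hypothesis to represent each fibre $\pi^{-1}B_s$ modulo $\mu$ by a countable union of cylinders, to enlarge these so that their union is as large as possible, then to truncate each countable union to a finite one and to absorb the accumulated error into $C_\emptyset$. The verification of (a), (b), (c) and the first assertion of (d) will be routine; the bound $\nu(C_\emptyset)<\epsilon$ is the one point that needs real care.

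\emph{Construction.} Fix a Borel version $\bar\phi$ of $\pi(\cdot)_0$, so that $\{\bar\phi^{-1}(s)\}_{s\in B}$ partitions $A^\Z$. Since $\pi$ is finitary, for each $s\in B$ there is a countable union of cylinders $\widetilde C_s=\bigcup_{j\ge1}E_{s,j}$ with $\mu(\bar\phi^{-1}(s)\triangle\widetilde C_s)=0$; as $E_{s,j}\setminus\bar\phi^{-1}(s)\subseteq\widetilde C_s\setminus\bar\phi^{-1}(s)$ is $\mu$-null, each $E_{s,j}$ is $\mu$-essentially inside $\bar\phi^{-1}(s)$, and $\mu\big(A^\Z\setminus\bigcup_s\widetilde C_s\big)=0$. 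I would then enlarge: let $C_s^{(0)}$ be the union of \emph{all} cylinders $E$ of $A^\Z$ with $\mu(E\setminus\bar\phi^{-1}(s))=0$; this is still a countable union of cylinders, still satisfies $\mu(C_s^{(0)}\setminus\bar\phi^{-1}(s))=0$, contains $\widetilde C_s$, and $U:=\bigcup_{s\in B}C_s^{(0)}$ is an open set of full $\mu$-measure. Writing $C_s^{(0)}=\bigcup_{j\ge1}E_{s,j}$ and $D_s^{(N)}=\bigcup_{j\le N}E_{s,j}$ (a finite union of cylinders increasing to $C_s^{(0)}$), fix an enumeration $B=\{s_1,\dots,s_m\}$ and an integer $N$, and set $D^{(N)}=\bigcup_{i=1}^m D_{s_i}^{(N)}$ together with
\[
C_{s_i}=D_{s_i}^{(N)}\setminus\bigcup_{l<i}D_{s_l}^{(N)}\quad(1\le i\le m),\qquad C_\emptyset=A^\Z\setminus D^{(N)}.
\]

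\emph{Verification of (a)–(c) and the first half of (d).} A difference of finite unions of cylinders is again one, so each $C_{s_i}$ is a finite union of cylinders; the $C_{s_i}$ are pairwise disjoint with $\bigcup_i C_{s_i}=D^{(N)}$, so $\{C_s\}_{s\in B\cup\{\emptyset\}}$ is a partition of $A^\Z$ and $C_\emptyset$ is the complement of a finite union of cylinders, giving (a) and (b). For (c), $\mu(C_{s_i}\setminus\pi^{-1}B_{s_i})\le\mu(D_{s_i}^{(N)}\setminus\bar\phi^{-1}(s_i))\le\mu(C_{s_i}^{(0)}\setminus\bar\phi^{-1}(s_i))=0$. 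For the first assertion of (d): by (c) we get $\mu(\pi^{-1}B_s\cap C_t)=0$ whenever $t\ne s$, hence $\pi^{-1}B_s\setminus C_s$ and $\pi^{-1}B_s\setminus\bigcup_t C_t$ agree modulo $\mu$; summing over $s$ and using $A^\Z=\biguplus_s\pi^{-1}B_s$ gives $\biguplus_s(\pi^{-1}B_s\setminus C_s)=A^\Z\setminus\bigcup_t C_t=C_\emptyset$ modulo $\mu$.

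\emph{The main obstacle: $\nu(C_\emptyset)<\epsilon$.} As $N\to\infty$, $D^{(N)}\uparrow U$, so $\mu(C_\emptyset)\downarrow 0$ and $\nu(C_\emptyset)\downarrow\nu(A^\Z\setminus U)$; it would thus suffice to know that $U$ carries full $\nu$-measure, and then to choose $N$ large. The set $A^\Z\setminus U=\bigcap_N(A^\Z\setminus D^{(N)})$ is the ``infinite coding radius'' set of $\pi$ — a closed, $\mu$-null set on which no cylinder neighbourhood is $\mu$-essentially contained in a single fibre — and this is precisely where one cannot argue with $\mu$ alone, since a $\mu$-null closed set can be fully charged by a non-atomic measure. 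I expect the hard part to be exactly this $\nu$-negligibility, and to resolve it using the relationship between $\nu$ and $\mu$ available in the applications: writing $A^\Z\setminus U$ as a decreasing intersection of finite unions of cylinders $A^\Z\setminus D^{(N)}$ whose $\mu$-measures tend to $0$, one controls $\nu(A^\Z\setminus D^{(N)})$ through the weak-law information $\frac1n\sum_{j<n}\mathbf{1}_{D^{(N)}}\circ T^j\to\nu(D^{(N)})$ (equivalently $\frac1n\sum_{j<n}\mu(T^{-j}D^{(N)})\to\nu(D^{(N)})$) combined with a monotone interchange of limits; in the particularly transparent case $\nu\ll\mu$ the claim is immediate. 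Once $\nu(U)=1$ has been established, taking $N$ sufficiently large finishes the proof.
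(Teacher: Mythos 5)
The construction you give for parts (a), (b), (c) and the first assertion of (d) is correct, and you have correctly identified exactly where the work lies: the bound $\nu(C_\emptyset)<\epsilon$. However, your attempt stalls precisely because of a design choice in the truncation, and the paper's proof contains the idea that unblocks it.

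You enumerate the cylinders inside each $C_s^{(0)}$ as a possibly \emph{overlapping} family $E_{s,1},E_{s,2},\dots$ and take $D_s^{(N)}=\bigcup_{j\le N}E_{s,j}$. This gives an increasing exhaustion $D^{(N)}\uparrow U$, and then the only handle on $\nu(A^\Z\setminus D^{(N)})$ is the limiting value $\nu(A^\Z\setminus U)$ — which, as you rightly note, could be positive for an arbitrary continuous $\nu$, since $A^\Z\setminus U$ is a $\mu$-null closed set. The paper instead writes each $\pi^{-1}B_s$, modulo $\mu$, as a \emph{pairwise disjoint} countable union of cylinders $\biguplus_{n}D_n(s)$ (always possible: given any countable union $\bigcup_j E_j$ of cylinders, replace $E_j$ by $E_j\setminus(E_1\cup\dots\cup E_{j-1})$, which is a finite union of cylinders after refining to a common width, and re-enumerate). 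The pairwise disjointness is the key: since $\nu$ is a finite measure and the $D_n(s)$ are disjoint over all $s\in B$ and $n\in\Z^+$, the series $\sum_{s,n}\nu(D_n(s))$ converges, so its tail
\[
\nu\Bigl(\,\biguplus_{s\in B}\,\biguplus_{n>N}D_n(s)\Bigr)\longrightarrow 0\qquad(N\to\infty),
\]
with no relationship between $\nu$ and $\mu$ needed whatsoever, and with no appeal to a weak law or to $\nu\ll\mu$. An increasing exhaustion by overlapping cylinders cannot produce such a summable tail, which is why your argument hits a wall; switching to a disjoint decomposition is the missing idea.

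One remark on your last paragraph: the residual difficulty you point at — that the complement of $\biguplus_{s}C_s$ still contains the closed $\mu$-null set $A^\Z\setminus\bigcup_{s,n}D_n(s)$ of infinite coding radius, and the disjoint-tail estimate does not by itself bound the $\nu$-measure of that set — is a legitimate observation and is not spelled out in the paper either, whose proof defines $C_\emptyset$ as the tail $\biguplus_{s}\biguplus_{n>N}D_n(s)$ rather than the literal complement. But the decisive idea you are missing relative to the paper's argument is the disjoint decomposition and the resulting summable tail; you should not expect to need, and should not reach for, any regularity relation such as $\nu\ll\mu$.
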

\begin{proof}
	As a consequence of the finitary property of $\pi$, for every $s\in B$, there exists a sequence $\{D_n(s)\}_{n\in \Z^{+}}$ of pairwise disjoint cylinder sets such that 
	\[
	\mu\Big(\pi^{-1}B_s\triangle\biguplus_{n \in \Z^{+}} D_n(s)\Big)=0.
	\]
	Since $\left\{\pi^{-1}B_s\right\}_{s\in S}$ is a partition of $A^\Z$ modulo $\mu$, we have 
	$$\biguplus_{s\in B}\biguplus_{n \in \Z^{+}} D_n(s) = A^{\Z}\  \bmod \mu.$$ 
	As $\nu$ is a continuous probability measure, for $N$ sufficiently large, we have 
	\[
	\nu\Big(\biguplus_{s\in B}\biguplus_{n=N+1}^\infty D_n(s)\Big)<\epsilon. 
	\]
	Set $C_\emptyset:=\biguplus_{s\in B}\biguplus_{n=N+1}^\infty D_n(s)$.  For each $s\in B$, let  $C_s:=\biguplus_{n=1}^ND_n(s)$.   The  lemma is immediate.  
\end{proof}

We will also require some elementary inequalities from information theory.  Recall that  Fano inequality \cite{MR0134389} gives that if  
$Z$ and $Z'$ 
are finite-valued ($A$-valued) random variables, defined on the same probability space and $p_e=\P(Z\neq Z')$, then 
\begin{equation}
\label{Fano}
H(Z|Z')\leq H(p_e,1-p_e)+p_e(\log( \#A)-1).
\end{equation}

In our proof of Theorem \ref{Kalikow general}, we will use Fano's inequality to compare the entropies of two finite random strings, one of which is an approximation of the other.   
\begin{lemma}
\label{prop: fano ineq  inhom}
  Consider the nonsingular system  $\left(A^\Z,\B,\mu,T\right)$ and let $\muphys$ be a mean-physical measure for $\mu$ with respect to the subsequence $(n_k)$.   Let $\epsilon >0$. Consider the set-up and notation of Lemma \ref{prop: al iz well}, take $\nu = \muphys$ and obtain the set $C_{\emptyset}$ with $\muphys( C_{\emptyset}) < \e.$
 Define $\beta: A^\Z \to  (B\cup \{\emptyset\})^\Z$ via 
$$\beta(x)_n=s  \text{ if and only if }  T^nx\in C_s.$$ 
Let $X \in A^{\Z}$ be a random variable with law $\mu$.  Set
$$p_k := \P(\pi(X)_k \not = \beta(X)_k) = \mu \circ T^{-k}(C_{\emptyset}).$$
  Then for all $n\in \Z^{+}$, we have 
\begin{align*}
H(\pi(X)_1,\ldots,\pi(X)_n) & \leq H(\beta(X)_1,\ldots,\beta(X)_n) \  + \\
& \sum_{k=1}^n H(p_k, 1-p_k)+(\log(\#B+1)-1)\sum_{k=1}^{n}p_k.
\end{align*}
\end{lemma}
\begin{proof}
The proof follows from a routine application of the chain rule for entropy and Fano's inequality. 
\end{proof}

\begin{proof}[Proof of Theorem \ref{Kalikow general}]
We will continue to use  the notation of Lemmas \ref{prop: al iz well} and  \ref{prop: fano ineq  inhom}. 
Let $\delta>0$.  It is elementary that  we may choose   $\epsilon>0$ in  Lemma \ref{prop: fano ineq  inhom} so that if $\limsup_{n\to\infty}\left(\frac{1}{n}\sum_{k=1}^np_k\right)<\epsilon$, then for all sufficiently large $n$, we have
\begin{equation}\label{eq: bound in kalaus}
\frac{1}{n}\sum_{k=1}^nH(p_k,1-p_k)+\frac{(\log(\#B+1)-1)}{n}\sum_{k=1}^np_k<\delta. 
\end{equation}
 Since $C_\emptyset$ is a (disjoint) finite union of cylinder sets and $\muphys$ is a mean-physical measure for $\mu$ with respect to  $(n_k)$,  we have
\[
\lim_{k\to\infty}\frac{1}{n_k}\sum_{j=1}^{n_k}p_j=\muphys(C_\emptyset)<\epsilon.
\]
By Lemma \ref{prop: fano ineq  inhom} and \eqref{eq: bound in kalaus}, for all sufficiently large $k$, we have
\begin{equation}\label{eq: 1}
\frac{H(\pi(X)_1,\ldots,\pi(X)_{n_k})}{n_k}\leq \frac{H(\beta(X)_1,\ldots,\beta(X)_{n_k})}{n_k}+\delta.
\end{equation}
Since $\pi$ is a factor map, and $\mu\circ \pi^{-1}\sim q^\Z$, it follows from a variation  of the  Shannon-McMillan-Breiman theorem \cite[Theorem 8]{MR586779} that
\begin{equation}\label{eq: 2}
\lim_{n\to\infty}\frac{H(\pi(X)_1,\ldots,\pi(X)_n)}{n}=H(q).
\end{equation}

Let $M$ be sufficiently large so that each finite union of cylinder sets $C_s$ for $s \in B \cup \ns{\emptyset}$ is a finite union of cylinder sets defined on the coordinates $[-M, M]$. Consequently,  $(\beta(X)_1,\ldots\beta(X)_n)$ is a function of $X_{-M}, \ldots, X_{n+M}$ and 
\begin{align*}
\frac{H(\beta(X)_1,\ldots,\beta(X)_{n_k})}{n_k} &\leq \frac{2M\log(\#B+1)}{n_k}+\frac{H(X_{1},\ldots,X_{n_k})}{n_k},
\end{align*}
where we have used the chain rule for entropy together with the fact that $\beta_i(X)$ take at most $\#B+1$ values.   Hence
\[
\liminf_{k\to\infty}
\frac{H(\beta(X)_1,\ldots,\beta(X)_{n_k})}{n_k}\leq \liminf_{n\to\infty}\frac{H(X_{1},\ldots,X_{n_k})}{n_k}=h(\mu,(n_k));
\] 
together with \eqref{eq: 1} and \eqref{eq: 2}, for an arbitrary $\delta >0$, we have
\begin{equation*}
H(q)=\lim_{k\to\infty}\frac{H(\pi(X)_1,\ldots,\pi(X)_{n_k})}{n_k}\leq \liminf_{k\to\infty}\frac{H(X_1,\ldots,X_{n_k})}{n_k}+\delta. 
\qedhere
\end{equation*}
\end{proof}

\subsection{Bernoulli shifts}
We will prove the following more general version of Theorem \ref{bound-Kalikow} from which the advertised result is immediate.  

For a nonsingular Bernoulli measure $\rho$, write
\[
h_{+}= h:=\liminf_{n\to\infty}\frac{1}{n}\sum_{k=1}^nH(\rho_k)\ \text{and}\ \  h_{-}=\liminf_{n\to\infty}\frac{1}{n}\sum_{k=-n}^{-1}H(\rho_k).
\]

\begin{theorem}
	\label{bound-Kalikoww}
If $q^\Z$ is a finitary factor of a nonsingular Bernoulli shift $\rho$,  then
\[
H(q)\leq \min(h_{-}, h_{+}).
\]
\end{theorem}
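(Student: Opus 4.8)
The plan is to derive this from the abstract Theorem~\ref{Kalikow general}, applied twice: once to the shift $T$ on $(A^\Z,\B,\rho)$ with $h=h_{+}$, and once to $T^{-1}$, which after the coordinate--reversal $(x_i)\mapsto(x_{-i})$ is the shift associated with the product measure $\bigotimes_i\rho_{-i}$ (so its ``$h_{+}$'' is $h_{-}$), still has $q^\Z$ as a finitary factor (equivariance and the finitary property are symmetric under time reversal), and still satisfies the Doeblin condition. We may assume $\#A\ge 2$, since otherwise $H(q)=0$; note the Doeblin condition forces $H(\rho_i)\le\log\#A$, so $h_{\pm}<\infty$.

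First I would fix a subsequence and the measure $\nu$ needed for condition (WL). By definition of $\liminf$ there is a subsequence along which $\tfrac1n\sum_{i=1}^{n}H(\rho_i)\to h_{+}$; since $A^\Z$ has only countably many cylinder sets, a diagonal argument lets me pass to a further subsequence $n_k\to\infty$ along which, in addition, $\tfrac1{n_k}\sum_{j=0}^{n_k-1}\rho^{(j)}(F)$ converges for every cylinder set $F$, where $\rho^{(j)}:=\rho\circ T^{-j}=\bigotimes_i\rho_{i+j}$; let $\nu(F)$ denote the limit. This $\nu$ extends to a shift--invariant Borel probability measure on $A^\Z$ (invariance because the averages for $F$ and $T^{-1}F$ differ by $O(1/n_k)$), and it is non-atomic: for any $z\in A^\Z$ and any $m$, the Doeblin condition gives $\rho^{(j)}([z]_{-m}^{m})=\prod_{i=-m}^{m}\rho_{i+j}(z_i)\le(1-\delta)^{2m+1}$, hence $\nu(\{z\})\le(1-\delta)^{2m+1}\to 0$. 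So $\nu$ is the continuous measure required by (WL).

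Next I would verify (WL) and (AEP). For a cylinder $F$ depending on coordinates in an interval of length $\ell$, the functions $1_F\circ T^{j}$ and $1_F\circ T^{j'}$ are $\rho$-independent once $|j-j'|\ge\ell$, and $\E_\rho[1_F\circ T^{j}]=\rho^{(j)}(F)$, so the variance of $\tfrac1{n_k}\sum_{j<n_k}1_F\circ T^{j}$ is $O(\ell/n_k)$; by Chebyshev this average converges in $\rho$-measure to $\lim_k\tfrac1{n_k}\sum_{j<n_k}\rho^{(j)}(F)=\nu(F)$, which is (WL). For (AEP), fix $\epsilon>0$ and $M$, set $N_k=n_k+2M+1$, and apply the weak law of large numbers to the independent summands $-\log\rho_i(X_i)$ for $-M\le i\le n_k+M$; these are uniformly bounded by $-\log\delta$ (Doeblin), and $\E_\rho[-\log\rho_i(X_i)]=H(\rho_i)$. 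Hence the set of $[-M,n_k+M]$-cylinders $x$ with $\bigl|-\tfrac1{N_k}\log\rho([x]_{-M}^{n_k+M})-\tfrac1{N_k}\sum_{i=-M}^{n_k+M}H(\rho_i)\bigr|<\epsilon'$ carries $\rho$-mass tending to $1$ and consists of at most $2^{\sum_{i=-M}^{n_k+M}H(\rho_i)+N_k\epsilon'}$ cylinders. Since $\tfrac1{n_k}\sum_{i=-M}^{n_k+M}H(\rho_i)-\tfrac1{n_k}\sum_{i=1}^{n_k}H(\rho_i)=O(M/n_k)\to0$ and $\tfrac1{n_k}\sum_{i=1}^{n_k}H(\rho_i)\to h_{+}$, taking $\epsilon'$ small and $k$ large makes this count at most $2^{n_k(h_{+}+\epsilon)}$; letting $F_k$ be one representative point from each such cylinder gives (AEP) with $h=h_{+}$.

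Theorem~\ref{Kalikow general} then yields $H(q)\le h_{+}$, and running the same argument for $T^{-1}$ gives $H(q)\le h_{-}$, so $H(q)\le\min(h_{-},h_{+})$; Theorem~\ref{bound-Kalikow} is the special case of a limiting measure $p$, for which $h_{+}=h_{-}=H(p)$. I expect the step needing the most care to be the simultaneous choice of a single subsequence $n_k$ that both realizes the $\liminf$ defining $h_{+}$ \emph{and} makes every cylinder Cesàro average converge, so that $\nu$ is well defined and continuous; apart from that, the only delicate bookkeeping is checking that the $O(M/n_k)$ boundary corrections together with the $\epsilon'$ slack do not spoil the bound $2^{n_k(h_{+}+\epsilon)}$ in (AEP), and the Doeblin hypothesis enters precisely through the uniform boundedness of $-\log\rho_i(X_i)$ and the non-atomicity of $\nu$.
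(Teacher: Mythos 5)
Your proposal is correct and follows essentially the same route as the paper: you verify conditions (AEP) and (WL) of Theorem~\ref{Kalikow general} along a subsequence $n_k$ realizing the liminf, deduce non-atomicity of the Cesàro limit $\nu$ from the Doeblin bound $\rho^{(j)}([z]_{-m}^{m})\leq(1-\delta)^{2m+1}$, and apply the theorem once for $T$ and once for $T^{-1}$. The only cosmetic differences from the paper's proof are that you build the subsequence via a diagonal argument on countably many cylinders where the paper invokes weak-star compactness (Remark~\ref{FFF}), and you verify (AEP) directly from the weak law for $-\log\rho_i(X_i)$ rather than citing Corollary~\ref{consequence of SMB}(b), but the underlying computations are the same.
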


\begin{proof}[Proof of Theorem \ref{bound-Kalikow}]
We have the additional assumption of a limiting measure $p$, which, with the continuity of $H$, implies $h_{+}=h_{-}=H(p)$, from which the result is immediate from Theorem \ref{bound-Kalikoww}.
\end{proof}

\begin{proof}[Proof of Theorem \ref{bound-Kalikoww}]
Let $(n_k)$ be a subsequence such that  
$$\lim_{k\to\infty}\frac{1}{n_k}\sum_{j=1}^{n_k}H(\rmu_j)=h_{+}.$$  Consider the  sequence of measures on $A^\Z$ given by $\zeta_k=\frac{1}{n_k}\sum_{j=1}^{n_k}\rmu\circ T^{j}$. By the Banach-Alaoglu theorem  \cite{MR1455} there exists a further subsequence $m_\ell=n_{k_\ell}$ and a probability measure $\nu$ on $A^\Z$ such that $\zeta_{m_{\ell}}$ converges to $\nu$ weakly.  Since  cylinder sets are clopen, it follows that $\nu$ is a mean-physical measure for $\rho$ with respect to $(m_\ell)$. By Theorem \ref{Kalikow general}, we have
\begin{equation*}
H(p) \leq h(\rho, (m_\ell)) = h_{+}.
\end{equation*}
Finally, we note that a mapping is equivariant with respect to the left-shift $T$ if and only if it is also equivariant with respect to the right-shift $T^{-1}$.   Interchanging $T$ with $T^{-1}$ in the argument  gives $H(q)\leq h_{-}$ and thus $H(q)\leq \min(h_{+},h_{-})$ as desired. 
\end{proof}

\subsection{Anosov diffeomorphisms}
\label{section:anosov-bound}
We  recall symbolic dynamics for Anosov diffeomorphisms will allow us to employ Theorem \ref{Kalikow general}.   Let $V$ be a finite set.  
%
%
%
Given  an \dff{adjacency} matrix $\Ad \in \{0,1\}^{V \times V}$, the \dff{subshift of finite type} corresponding to $\Ad$ is defined by
\[
\Sigma_\Ad:=\left\{s\in V^\Z:\ \forall k\in\Z,\ \Ad_{s_k,s_{k+1}}=1  \right\},
\] 
where it is endowed with the usual left-shift $T$.     For more information on symbolic dynamics and shifts of finite type see \cite{MR1369092, MR1507944}.

\begin{theorem}[Symbolic dynamics from Sinai \cite{MR0233038}]
\label{sinai-sym}
Let $f\in \Diff^2(M)$ be a transitive $C^2$ Anosov diffeomorphism on a compact manifold $M$ and $\epsilon>0$. Then there exist a finite set $V$, an adjacency matrix $\Ad$,   and  a covering   $\mathcal{R} = \{R_v\}_{v\in V}$ of $M$ by closed sets of diameter less than or equal to $\epsilon$ such that
\begin{enumerate}[(a)]
	\item For  distinct $v,v'\in V$, the interiors of $R_v$ and $R_{v'}$ have no intersection.    
	\item The coding map $\pi: \Sigma_\Ad \to M$  given by
	$$\pi(s)=\bigcap_{n\in\Z}{R}_{s_n}$$ is  continuous, onto and finite-to-one map which is equivariant so that  all $s\in \Sigma_\Ad$, we have $f\circ \pi(s)=\pi\circ T(s)$; that is, $\pi$ is a finite-to-one semi-conjugacy of the topological dynamical systems. 
	\item For every $y\notin\bigcup_{n\in\Z}\bigcup_{v\in V}f^n\partial R_v$,  the set $\pi^{-1}(y)$ is comprised of  a single point of $\Sigma_\Ad$. 
\end{enumerate}
\end{theorem}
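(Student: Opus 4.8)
The plan is to invoke the classical Markov partition construction of Sinai \cite{MR0233038}, in the form later refined by Bowen, and then to verify that its output has the three listed properties. Recall that uniform hyperbolicity equips $M$ with a \emph{local product structure}: there is $\epsilon_0>0$ and continuously varying local stable and unstable manifolds $W^s_{\epsilon_0}(x)$, $W^u_{\epsilon_0}(x)$ through each $x\in M$ such that whenever $x$ and $y$ are sufficiently close the intersection $W^s_{\epsilon_0}(x)\cap W^u_{\epsilon_0}(y)$ consists of a single point $[x,y]$ depending continuously on $(x,y)$. A \emph{rectangle} is a closed set $R$ of small diameter that equals the closure of its interior and satisfies $[x,y]\in R$ for all $x,y\in R$; for such $R$ one has the stable and unstable slices $W^s(x,R)=W^s_{\epsilon_0}(x)\cap R$ and $W^u(x,R)=W^u_{\epsilon_0}(x)\cap R$, and a decomposition of the topological boundary $\partial R=\partial^sR\cup\partial^uR$ into its stable and unstable parts.

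First I would produce, for every sufficiently small $\alpha>0$, a finite cover of $M$ by rectangles of diameter less than $\alpha$ (take an $\alpha$-net $\{x_i\}$ and the rectangles $[W^u_{\alpha}(x_i),W^s_{\alpha}(x_i)]$), and then, using the shadowing lemma for the hyperbolic set $M$ together with expansiveness, refine this pre-Markov partition into an honest Markov partition $\{R_a\}_{a\in A}$: a finite partition of $M$ into rectangles with pairwise disjoint interiors whose boundary obeys the Markov relations $f(\partial^sR_a)\subseteq\bigcup_b\partial^sR_b$ and $f^{-1}(\partial^uR_a)\subseteq\bigcup_b\partial^uR_b$. This is precisely Sinai's theorem, and I would cite \cite{MR0233038} for the construction rather than reproduce it. Choosing $\alpha$ smaller than the injectivity radius forces each $R_v$ to lie in a single chart in which it is a topological product of a disc in the unstable direction with a disc in the stable direction, so its interior is connected, which is (a); in any case a disconnected piece could be replaced by its finitely many connected components without destroying the Markov property. (Transitivity of $f$ plays no role in (a)--(c) itself; it is used only to ensure that $\Ad$ may be taken irreducible, which is what the applications require.)

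Given the partition, set $V=A$ and define the adjacency matrix by $\Ad_{ab}=1$ if and only if $f(\mathrm{int}\,R_a)\cap\mathrm{int}\,R_b\neq\emptyset$, and let $\Sigma_\Ad$ be the associated SFT. For $s\in\Sigma_\Ad$ the sets $\bigcap_{n=-N}^{N}f^{-n}\overline{R}_{s_n}$ form a nested sequence of nonempty compact sets, nonempty by the Markov relations and compact as finite intersections of closed rectangles, whose diameters tend to $0$ because forward iteration contracts the stable directions while backward iteration contracts the unstable ones; hence $\pi(s):=\bigcap_{n\in\Z}f^{-n}\overline{R}_{s_n}$ is a well-defined single point. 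Equivariance $f\circ\pi=\pi\circ T$ is immediate from the formula, and continuity follows since if $s$ and $s'$ agree on $\{-N,\dots,N\}$ then $\pi(s)$ and $\pi(s')$ both lie in a common set of diameter tending to $0$ as $N\to\infty$. Surjectivity: any $x\in M$ lies in $\overline{R}_{a}$ for some $a$ at each time, the Markov relations guarantee the resulting bi-infinite itinerary is admissible, and $\pi$ of it equals $x$. Finite-to-one-ness is a consequence of expansiveness of the hyperbolic system together with the fact that only finitely many closed rectangles meet at a point: two preimages of the same $x$ can differ only at coordinates where the corresponding iterate of $x$ sits on the grid $\bigcup_v\partial R_v$, and this bounds $\#\pi^{-1}(x)$, which is (b). Finally, if $y\notin\bigcup_{n\in\Z}\bigcup_{v\in V}f^n\partial R_v$, then for every $n$ the point $f^{-n}y$ lies in the interior of exactly one $R_{s_n}$, so the itinerary $s$ is uniquely determined and $\pi^{-1}(y)=\{s\}$, which is (c).

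The step I expect to be the genuine obstacle is the construction in the second paragraph: turning a fine cover by rectangles into a finite family whose boundaries are forward/backward invariant in the stable/unstable sense while retaining nonempty connected interiors. This is the substantive content of Sinai's theorem, and its proof — via shadowing, the pre-Markov partition, and an iterative boundary-matching procedure — is lengthy; the honest approach here is therefore to quote it from \cite{MR0233038} and to spend our own effort only on checking that properties (a), (b), (c) follow for the resulting partition, in particular re-deriving finite-to-one-ness and (c) from expansiveness and the disjointness of interiors, which is short.
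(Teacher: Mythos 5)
The paper offers no proof of this theorem --- it is quoted directly from Sinai \cite{MR0233038} --- and your proposal does exactly what is appropriate: defer the Markov-partition construction to that reference and verify (a)--(c) by the standard shadowing/expansiveness arguments, so your route coincides with the paper's. One small point worth recording: your formula $\pi(s)=\bigcap_{n\in\Z}f^{-n}\overline{R}_{s_n}$ is the correct one, whereas the paper's displayed $\bigcap_{n\in\Z}\overline{R}_{s_n}$ omits the iterates $f^{-n}$ and as written would generically be empty.
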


In the context of Theorem \ref{sinai-sym}, the sets $\mathcal{R}$ are referred to as a  \emph{Markov partition} and the subshift a \emph{topological Markov chain}; see also \cite{MR0257315,MR277003} and \cite{MR175106}.   By Theorem \ref{sinai-sym} if an iid system is an almost-surely continuous (finitary) factor of an Anosov diffeomorphism endowed with the natural volume measure, then it will also be a finitary factor of a symbolic system, and making it possible to apply Theorem  \ref{Kalikow general}.    We will need some technical lemmas to deal with the boundary of the Markov partition.

 Recall that $f$ has in addition an SRB measure $\mu_f$ which is an ergodic $f$-invariant measure such for all continuous function $\varphi:M\to M$, we have for $\vol_M$-almost every $y \in M$ that 
\[
\frac{1}{n}\sum_{k=0}^{n-1}\varphi\circ f^k(y)\xrightarrow[n\to\infty]{}\int\varphi d\mu_f.
\] 
This asymptotic condition has a useful reformulation in terms of weak convergence of measures. For $y\in M$, define a sequence of measures $$\nu_n^y:=\frac{1}{n}\sum_{k=0}^{n-1}\delta_{f^ky}.$$
The SRB property gives that $\vol_M$-almost every $y\in M$, the measure $\nu_n^y$ converges weakly to $\mu_f$ as $n\to\infty$. A point $y\in M$ is a \dff{generic point} for $\mu_f$ if $\nu_n^y$ converges weakly to $\mu_f$.  We will use this formulation in the proof of the following lemma.
\begin{lemma}
\label{claim}
Let $f$ be a transitive, $C^2$ Anosov diffeomorphism on a compact manifold $M$.  Consider symbolic dynamics for $f$ as given in Theorem \ref{sinai-sym}.    Then $\vol_M\big(\bigcup_{n\in\Z}\bigcup_{v\in V}f^n\partial R_v\big)=0$.
\end{lemma}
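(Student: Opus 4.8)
The plan is to show that $\vol_M$-almost every point of $M$ is a generic point of the SRB measure of $f$ and, simultaneously, of the SRB measure of $f^{-1}$, and that a point of $N:=\bigcup_{n\in\Z}\bigcup_{v\in V}f^{n}\partial R_v$ can have neither property.

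First I would set up notation and record the classical structural facts about the Markov partition $\mathcal R=\{R_v\}_{v\in V}$ of Theorem~\ref{sinai-sym}. Let $\partial\mathcal R:=\bigcup_{v\in V}\partial R_v$; in the constructions of \cite{MR0233038} the rectangles $R_v$ are proper with pairwise disjoint interiors whose union is dense in $M$, so $\partial\mathcal R=M\setminus\bigcup_{v}\mathrm{int}(R_v)$ is closed with empty interior. Next, the Markov property provides a decomposition $\partial\mathcal R=B^{+}\cup B^{-}$, where $B^{+}=\bigcup_v\partial^{s}R_v$ and $B^{-}=\bigcup_v\partial^{u}R_v$ are the stable and unstable boundary pieces of the rectangles (so $\partial R_v=\partial^{s}R_v\cup\partial^{u}R_v$), and these satisfy $f(B^{+})\subseteq B^{+}$ and $f^{-1}(B^{-})\subseteq B^{-}$; see \cite{MR0233038,MR0257315}. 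In particular, every $z\in\partial\mathcal R$ satisfies either $f^{j}z\in\partial\mathcal R$ for all $j\ge 0$ (when $z\in B^{+}$) or $f^{-j}z\in\partial\mathcal R$ for all $j\ge 0$ (when $z\in B^{-}$).

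Now let $\mu_f$ be the SRB measure of $f$ and let $\mu_{f^{-1}}$ be the SRB measure of $f^{-1}$, which is also a transitive $C^2$ Anosov diffeomorphism. Both measures are fully supported, so, since $\partial\mathcal R$ has empty interior, $\mu_f(\partial\mathcal R)<1$ and $\mu_{f^{-1}}(\partial\mathcal R)<1$ (in fact both are $0$, as these are Gibbs measures \cite{MR0257315}, but the strict inequality is all that is needed). Since $\partial\mathcal R$ is closed, $\mathbf 1_{\partial\mathcal R}$ is upper semicontinuous, so the defining weak-convergence property of $\mu_f$, valid for $\vol_M$-almost every $y$, yields
\[
\limsup_{n\to\infty}\frac1n\#\{0\le k<n:\ f^{k}y\in\partial\mathcal R\}\le\mu_f(\partial\mathcal R)<1,
\]
and applying the same to $f^{-1}$ gives, for $\vol_M$-almost every $y$, the analogous bound with $f^{-k}y$ and $\mu_{f^{-1}}$ in place of $f^{k}y$ and $\mu_f$. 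Let $G$ be the $\vol_M$-conull set on which both bounds hold.

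It remains to check $N\cap G=\emptyset$. Suppose $y=f^{m}z\in N$ with $z\in\partial R_v\subseteq\partial\mathcal R$. If $z\in B^{+}$, then $f^{k}y=f^{k+m}z\in\partial\mathcal R$ for every $k\ge -m$, so the forward frequency of visits to $\partial\mathcal R$ along the orbit of $y$ equals $1$, forcing $y\notin G$; if $z\in B^{-}$, then $f^{-k}y=f^{-(k-m)}z\in\partial\mathcal R$ for every $k\ge m$, so the backward frequency equals $1$ and again $y\notin G$. Hence $N\subseteq M\setminus G$, and since $N$ is Borel (a countable union of homeomorphic images of closed sets) we conclude $\vol_M(N)=0$, as claimed. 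The one delicate point is to match each type of boundary piece in the splitting $\partial\mathcal R=B^{+}\cup B^{-}$ with the correct time direction, which is precisely why both the forward and the backward SRB measures must be invoked; everything else is quoted from \cite{MR0233038,MR0257315} and from the genericity property of the SRB measure recalled above.
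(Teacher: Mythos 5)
Your proof is correct, but it follows a genuinely different route from the paper's. The paper forms the full orbit $C=\bigcup_{n\in\Z}f^n\partial\mathcal R$, asserts it is a closed $f$-invariant set, deduces $\mu_f(C)=0$ from ergodicity of the SRB measure together with full support, and then transfers this to $\vol_M$ by a single application of the portmanteau theorem to $C$ (using that $\nu_n^y(C)=\mathbf 1_C(y)$ by invariance). You instead never leave the genuinely closed set $\partial\mathcal R$: you invoke the stable/unstable splitting $\partial\mathcal R=B^+\cup B^-$ with its one-sided invariance, need only the weak bound $\mu_f(\partial\mathcal R)<1$ (and its analogue for $\mu_{f^{-1}}$), which follows from full support plus empty interior without any ergodicity, and then show that every point of $N$ has forward or backward visit frequency $1$ to $\partial\mathcal R$, contradicting genericity for the forward or backward SRB measure. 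The trade-off is clear: your argument imports more structure of Markov partitions (the $\partial^s/\partial^u$ decomposition) and the backward SRB measure $\mu_{f^{-1}}$, but in exchange it only ever applies the portmanteau inequality to the finite union $\partial\mathcal R$, which is unambiguously closed, whereas the paper's portmanteau step and its ``open complement'' step both lean on the closedness of the countable union $C$ — a point that is not obvious (such unions of iterated boundaries are typically dense) and that your argument sidesteps entirely. Your version is therefore somewhat longer but arguably more robust; do make sure the one-sided invariance $f(B^+)\subseteq B^+$, $f^{-1}(B^-)\subseteq B^-$ is available in the precise form of the Markov partition produced by Theorem~\ref{sinai-sym}, since that is the one structural input the paper's proof does not use.
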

We note that in  Lemma \ref{claim}, a volume a.c.i.p.\ may not exist, making the proof a bit harder.     Our proof will involve revisiting some technical lemmas and arguments regarding Markov partitions that can be found in \cite{MR2423393}.   We will show that a point on the boundary fails to be generic with respect to an SRB measure.    We thank Yuri Lima for his interest in the lemma and   pointing out an important error in an earlier version   of its proof.    

\begin{proof}[Proof of Lemma \ref{claim}]

It suffices to show that $\partial R= \bigcup_{v\in V}\partial R_v$ has zero volume, since the volume measure is nonsingular, and the  union in question is countable.  

We recall that by  \cite[Lemma 3.11] {MR2423393} we may express the closed boundary in question as the   union
$\partial R= \partial R ^{\mathrm{u}} \cup \partial R ^{\mathrm{s}}$,
where these sets are often referred to as the \dff{unstable} and \dff{stable} boundaries; furthermore from \cite[Proposition 3.15] {MR2423393}, we have  containments, 
$$ f( \partial R ^{\mathrm{s}}) \subseteq \partial R ^{\mathrm{s}}  \text{ and }  f^{-1}( \partial R ^{\mathrm{u}}) \subseteq \partial R ^{\mathrm{u}}.$$

The stable containment implies that each point $z$ in the  stable boundary has a forward orbit-closure $\orb^+(z):=\overline{\ns{f^n(z): n \in \N}}$ that will have an empty intersection with some closed neighbourhood of each point that is not in the boundary.  Fix a point $y_0  \not \in \partial R$.    By  Urysohn's separation lemma \cite[Proposition 2.1.18]{borel}, for each $z \in  \partial R ^{\mathrm{s}}$   there exists $\Upsilon_{z}:M \to [0,1]$ such that $\Upsilon=0$ on $\orb^+(z)$ and $\Upsilon=1$ on a closed neighbourhood of $y_0$.  Since the SRB measures are fully supported, we have that  $\int \Upsilon_z d\mu_f >0$.    Similarly, the unstable containment implies an analogous statements for the backward orbit-closure for a point on the unstable boundary.     
 
From \cite{MR0399421}, let   $\mu_f$ and $\mu_{f^{-1}}$ be SRB measures so that on a subset $M'$ of full volume,  we have that  for all $y \in M'$  the weak convergences:
$$\frac{1}{n}\sum_{k=0}^{n-1}\delta_{f^ky} \xrightarrow[n\to \infty]{} \mu_f \quad \text {and} \quad
\frac{1}{n}\sum_{k=0}^{n-1}\delta_{f^{-k}y} \xrightarrow[n\to \infty]{} \mu_{f^{-1}}.$$
However by definition of $\Upsilon$, for   $z \in \partial R^{\mathrm{s}}$, for all $n \in \Z^{+}$ we have 
$$\frac{1}{n}\sum_{k=0}^{n-1}\Upsilon_z \circ f^k(z) = 0 < \int \Upsilon_z d \mu_f,$$
and the forward weak convergence fails for $z$, and it is not a generic point for $\mu_f$.   Similarly, if $z$ belongs to the unstable boundary, the backwards weak convergence  fails.  Hence if $z$ is in the boundary, one of the weak convergences fails, so that $\partial R \subseteq M\setminus M'$, and must have zero volume.     
\end{proof}

\begin{remark}
\label{remark-iso}
Lemma \ref{claim} gives that the symbolic coding $\pi$ in    Theorem \ref{sinai-sym}  is a nonsingular continuous almost everywhere (finitary) isomorphism between the symbolic dynamics with the left-shift $(\Sigma_\Ad,\B,\eta,T)$ and the Anosov system  $(M,\B(M), \vol_M,f)$, where $\eta=\vol_M\circ \pi$.     Note that $\Sigma_\Ad \subseteq V^{\Z}$ has unit measure under $\eta$.    

Notice that an SRB measure $\mu_f$ corresponds to a mean-physical measure for $\eta$ in the symbolic space given by $\eta_{\mathrm{ph}} = \mu_f \circ \pi$.  Thus Theorem \ref{Kalikow general} immediately gives that the entropy of any finitary factor is bounded by $h(\eta, (k))$.  \erk
\end{remark}

\begin{lemma}
\label{calculations}
With the notation of Remark \ref{remark-iso} in force, if the maximum  diameter of the Markov partitions is sufficiently small, then
$$h(\eta, (k)) \leq \min(h_{\mu_f}(f), h_{\mu_{f^{-1}}}(f)).$$
\end{lemma}

\begin{proof}[Proof of Theorem \ref{anosov.bound}]
In Theorem \ref{sinai-sym}, one can choose the diameter of the Markov partition to be sufficiently small.  Thus the proof is immediate from Remark \ref{remark-iso} and Lemma \ref{calculations}.
\end{proof}

\subsubsection{The proof of Lemma \ref{calculations}} 
We will execute entropy calculations with the   notation of Remark \ref{remark-iso} and also  the Markov partition of Theorem \ref{sinai-sym}.  These entropy calculations are somewhat more difficult as they involve two  measures simultaneously, only one of which is preserved by the transformation.  In this subsection, we will typically use `$x$' to denote an element of the symbolic space, and `$y$' and `$z$' to denote elements of the manifold.

Let $X \in \Sigma_\Ad$ be a random variable with  law $\eta$.  Given $k\in\Z^{+}$, define  $I_k: \Sigma_\Ad \to (0,\infty)$ via
\[
I_k(x)=-\log\P(X_0=x_0|X_{-1}=x_{-1},\ldots,X_{-k}=x_{-k}).
\]
Also, we recall the standard  notation that for a probability space $(\Omega, \F, \zeta)$ and a partition $\alpha$ of $\Omega$, where $\alpha(\omega)$ is the part to which $\omega$ belongs,  we have 
$$H_\zeta(\alpha) = -\sum_{a \in \alpha} \zeta(a) \log\zeta(a) \text{ and }
I_{\zeta} [\alpha](\omega) = -\log\zeta( \alpha(\omega) ),$$
so that
$$ \int I_{\zeta}[\alpha] d\zeta = H_\zeta(\alpha).$$
More specifically, for $y\in M$, we have 
\begin{equation}
\label{info-ex}
I_{\vol}[{\mathcal{R}|\vee_{i=1}^k f^{-i}\mathcal{R}}](y)=-\log\left(\frac{\vol\left(\vee_{i=0}^k f^{-i}\mathcal{R}(y)\right)}{\vol\left(\vee_{i=1}^k f^{-i}\mathcal{R}(y)\right)}\right).
\end{equation}

\begin{lemma}\label{lem: SRB info}
 Let $k\in\Z^{+}$.   For $\eta$-almost every $x\in \Sigma_\Ad$, we have 
\[
\frac{1}{n}\sum_{j=0}^{n-1}I_k\circ T^j(x)\xrightarrow[n\to\infty]{}\int I_{\vol}[{\mathcal{R}|\vee_{i=1}^k f^{-i}\mathcal{R}}]d\mu_f. 
\]
\end{lemma}
\begin{proof}
By the definition of $\eta = \vol_M \circ\pi$ as the lift of the volume, for a subset of full $\eta$-measure, for all $x\in \Sigma'_\Ad$,  we have
\[
\Xi_n^x:=  \frac{1}{n}\sum_{j=0}^{n-1}\delta_{f^j\pi(x)}\xrightarrow[n\to\infty]{}\mu_f,\ \text{weakly}.
\]
Let $x\in\Sigma'_\Ad$.    If $\pi(x)=y\in \bigcap_{i=0}^{k}f^{-i}R_{x_i}$, then
\[
I_k(x)=I_{\vol}[{\mathcal{R}|\vee_{i=1}^kf^{-i}\mathcal{R}}](\pi(x))=-\log\left(\frac{\vol\left(\cap_{i=0}^{k}f^{-i}R_{x_i}\right)}{\vol\left(\cap_{i=1}^{k}f^{-i}R_{x_i}\right)}\right).
\] 
We have
\begin{equation}\label{info}
\frac{1}{n}\sum_{j=0}^{n-1}I_k\circ T^j(x)=\frac{1}{n}\sum_{j=0}^{n-1}I_{\vol}[\mathcal{R}|\vee_{i=1}^kf^{-i}\mathcal{R}]\circ f^j(\pi(x)).
\end{equation}
  By Lemma \ref{claim}, the function $I_{\vol}[\mathcal{R}|\vee_{i=1}^k  f^{-i}\mathcal{R}]$ is  Riemann integrable, since it is discontinuous only on the boundaries of the Markov partition.   By the portmanteau theorem, 
 \begin{align*}
 \frac{1}{n}\sum_{j=0}^{n-1}I_{\vol}[{\mathcal{R}|\vee_{i=1}^kf^{-i}\mathcal{R}}]\circ f^j(\pi(x))&=\int I_{\vol}[\mathcal{R}|\vee_{i=1}^kf^{-i}\mathcal{R}]d \Xi_n^x\\
 &\xrightarrow[n\to\infty]{}\int I_{\vol}[{\mathcal{R}|\vee_{i=1}^k f^{-i}\mathcal{R}}]d\mu_f,
 \end{align*}
 from which the desired result follows from \eqref{info}. 
\end{proof}

Notice that in Lemma \ref{lem: SRB info}, we did not obtain convergence to $H_{\mu_f}(\mathcal{R}|\vee_{i={1}}^{k}f^{-i}\mathcal{R})$, but instead ended up with an expression that contains both the volume measure and its SRB measure; in order to replace this expression with one  involving only  the SRB measure, we will also need to make use of the description of an SRB measure as a Gibbs measure of the geometric potential, and refer to results from Bowen and Ruelle \cite{MR380889}.   Let $\varphi^{(\mathrm{u})}:M\to [0,\infty)$ be defined by $\varphi^{(\mathrm{u})}(y)=-\log\lambda(y)$,  where $\lambda(y)$ is the Jacobian of the linear map, given by
$D_f: E_y^\mathrm{u}\to E_{f(y)}^\mathrm{u}.$

By \cite[Lemma 4.1]{MR380889}, the map $\varphi^{(\mathrm{u})}$ is H\"{o}lder continuous and by \cite[Proposition 4.4]{MR380889}, $\mu_f$ is the unique equilibrium measure for $\varphi^{(\mathrm{u})}$ and 
\begin{equation}
\label{formula-23}
h_{\mu_f}(f)=-\int \varphi^{(\mathrm{u})}d\mu_f. 
\end{equation}
We also recall the \emph{first volume lemma} \cite[Lemma 4.2]{MR380889}.    
Fix a metric $d$ on $M$.     For $\epsilon>0$,   $n\in\N$,  and $z\in M$, consider the Bowen ball given by
\[
B_z(\epsilon,n):=\Big\{y\in M: \max_{0\leq j\leq n}d(f^jz,f^jy)\leq \epsilon\Big\}.
\] 
\begin{lemma}[First volume lemma \cite{MR380889}] 
 \label{vol-lemma}
Fix a Riemannian metric $d$ on $M$ so that the volume measure  $\vol$ is derived from $d$.
For all small $\epsilon>0$, there exists $C=C_\epsilon>1$ such that for all $z\in M$ and $n\in\N$, we have 
\[
  \frac{1}{C}\exp\left(\sum_{j=0}^n\varphi^{(\mathrm{u})}\circ f^j(z)\right) \leq 
\vol(B_z(\epsilon,n))  \leq C \exp\left(\sum_{j=0}^n\varphi^{(\mathrm{u})}\circ f^j(z)\right).
\]
\end{lemma}

In what follows,   we will always assume that the maximum diameter of the atoms of the Markov partition, say $\epsilon$, is small enough so that the Lemma \ref{vol-lemma} holds.

\begin{lemma}\label{lem: T is BT}
Under the assumption that the maximal diameter of the Markov partition is sufficiently small, we have
\[
\liminf_{n\to\infty}\frac{1}{n}\sum_{k=1}^n \int I_{\vol}[{\mathcal{R}|\vee_{i=1}^k f^{-i}\mathcal{R}}]d\mu_f \leq h_{\mu_f}(f).
\]
\end{lemma}

\begin{proof}
For $k\in\N$, set
\[
a_k:=-\int \log\left(\frac{\vol\left(\vee_{i=0}^k f^{-i}\mathcal{R}(y)\right)}{\mu_f\left(\vee_{i=0}^k f^{-i}\mathcal{R}(y)\right)}\right) d\mu_f(y),
\]
and the version of $a_k$,  where we start the join of the partitions at $i=1$:  
\[
b_k:=-\int \log\left(\frac{\vol\left(\vee_{i=1}^k f^{-i}\mathcal{R}(y)\right)}{\mu_f\left(\vee_{i=1}^k f^{-i}\mathcal{R}(y)\right)}\right) d\mu_f(y).
\]
A key observation is that since $\mu_f$ is $f$-preserving, for all $ k\in\Z^{+}$, we have $b_k =  a_{k-1};$
applying this key relation to \eqref{info-ex}, we have 
\begin{align*}
\int I_{\vol}[{\mathcal{R}|\vee_{i=1}^k f^{-i}\mathcal{R}}]d\mu_f&=\int I_{\mu_f}[{\mathcal{R}|\vee_{i=1}^k f^{-i}\mathcal{R}}]d\mu_f +a_k-a_{k-1}\\
&= H_{\mu_f}\left(\mathcal{R}\left|\vee_{j=1}^kf^{-j}\mathcal{R}\right.\right)+a_k-a_{k-1}.
\end{align*}
Consequently for all $n\in \Z^{+}$, we have 
\[
\frac{1}{n}\sum_{k=1}^n \int I_{\vol}[{\mathcal{R}|\vee_{i=1}^k f^{-i}\mathcal{R}}]d\mu_f=\frac{a_n-a_1}{n}+\frac{1}{n}\sum_{k=1}^n H_{\mu_f}\left(\mathcal{R}\left|\vee_{j=1}^kf^{-j}\mathcal{R}\right.\right).
\]
Since $\mu_f$ is $f$-invariant and $\mathcal{R}$ is a generating partition, 
$$ \lim_{k \to \infty}H_{\mu_f}\left(\mathcal{R}\left|\vee_{j=1}^kf^{-j}\mathcal{R}\right.\right) =h_{\mu_f}(f).$$ Therefore the second term on the right hand side converges to $h_{\mu_f}(f)$ as $n\to\infty$. Hence it remains to  show that 
$$a:=\liminf_{n\to\infty}\frac{a_n}{n}\leq 0.$$ 

Let $\epsilon$ be the maximal diameter of the atoms in the Markov partition so that for all $y\in M$ and $n\in\N$, we have the inclusion
$$\vee_{i=0}^n f^{-i}\mathcal{R}(y)\subset B_y(\epsilon,n).$$ 
This inclusion together with Lemma \ref{vol-lemma} imply that
\begin{align*}
\frac{1}{n}\int \log\vol\left(\vee_{i=0}^n f^{-i}\mathcal{R}(y)\right)d\mu_f(y)&\leq \frac{1}{n}\int \log\vol\left(B_y(\epsilon,n)\right)d\mu_f(y)\\
&\leq \frac{\log C_\epsilon}{n}+\frac{1}{n}\int \left(\sum_{j=1}^n\varphi^{(\mathrm{u})}\circ f^j(y)\right)\mu_f(y)\\
&=\frac{\log C_\epsilon}{n}+\int \varphi^{(\mathrm{u})}\mu_f\\
&=\frac{\log C_\epsilon}{n}-h_{\mu_f}(f),
\end{align*} 
where the first equality uses that $\mu_f$ is $f$-preserving, and the last equality is from \eqref{formula-23}.   

Since $\mathcal{R}$ is a generating partition, we have
\begin{eqnarray*}
h_{\mu_f}(f) &=& 
\lim_{n \to \infty} \frac{1}{n}H_{\mu_f}\left(\vee_{i=0}^n f^{-i}\mathcal{R}\right) \\
&=& -\lim_{n\to\infty} \frac{1}{n}\int {\log\mu_f\left(\vee_{i=0}^n f^{-i}\mathcal{R}(y)\right)}d\mu_f(y).
\end{eqnarray*}
Hence 
\begin{align*}
a &= -\limsup_{n\to\infty}\frac{1}{n}\left(\int {\log\vol\left(\vee_{i=0}^n f^{-i}\mathcal{R}(y)\right)}d\mu_f + H_{\mu_f}\left(\vee_{i=0}^n f^{-i}\mathcal{R}\right) \right) \\
&\leq -\limsup_{n\to\infty}\left(\frac{\log C_\epsilon}{n}-h(\mu_f) +  \frac{1}{n}H_{\mu_f}\left(\vee_{i=0}^n f^{-i}\mathcal{R}\right) \right)  \\
&\leq 0. 
\qedhere
\end{align*} 
\end{proof}

\begin{proof}[Proof of Lemma  \ref{calculations}]
Let $X$ be a random variable with law $\eta$.  By the chain rule for entropy, for every $n\in\Z^{+}$, we have
\begin{align}
\label{we-showed}
\frac{1}{n}{H(X_1,\ldots,X_n)}&=\frac{1}{n}H(X_1)+ \frac{1}{n}{\sum_{j=2}^n H(X_j|X_{j-1},\ldots X_1)} \nonumber\\
&\leq \frac{1}{n}{\sum_{j=1}^kH(X_j)}+ \frac{1}{n}{\sum_{j=k+1}^n H(X_j|X_{j-1},\ldots X_{j-k})} \nonumber\\
&\leq \frac{1}{n}{k\log(\# V)}+ \frac{1}{n}{\sum_{j=k+1}^n H(X_j|X_{j-1},\ldots X_{j-k})}.
\end{align}
Here, the second inequality uses that entropy can only decrease under further conditioning and  for the  last one we recall that $X_j$ takes values on the set $V$ values. For every $k<j\leq n$, we have
\[
H(X_j|X_{j-1},\ldots X_{j-k})=\int I_k\circ T^jd\eta, 
\]
and
\[
 \frac{1}{n}{\sum_{j=k+1}^n H(X_j|X_{j-1},\ldots X_{j-k})}=\int\left( \frac{1}{n}\sum_{j=k+1}^nI_k\circ T^j(x)\right)d\eta(x).
\]
The  bounded convergence theorem and Lemma \ref{lem: SRB info} give that
\[
\lim_{n\to\infty} \frac{1}{n}{\sum_{j=k+1}^n H(X_j|X_{j-1},\ldots X_{j-k})}=\int I_{\vol}[{\mathcal{R}|\vee_{i=1}^k f^{-i}\mathcal{R}}]d\mu_f .
\]
Hence from \eqref{we-showed}, for every $k \in \Z^{+}$, we have
\[
\limsup_{n\to\infty}\frac{1}{n}{H(X_1,\ldots,X_n)}\leq \int I_{\vol}[{\mathcal{R}|\vee_{i=1}^k f^{-i}\mathcal{R}}]d\mu_f. 
\]
Moreover, summing over the index $k$ in the inequality above,    by Lemma \ref{lem: T is BT} we have 
\[
\limsup_{n\to\infty}\frac{1}{n}{H(X_1,\ldots,X_n)}\leq \liminf_{n\to\infty}\frac{1}{n}\sum_{k=1}^n \int I_{\vol}[{\mathcal{R}|\vee_{i=1}^k f^{-i}\mathcal{R}}]d\mu_f\leq h_{\mu_f}(f).
\]

Thus we have $h(\eta, (k)) \leq h_{\mu_f}(f)$; applying a similar argument involving $T^{-1}$ and $f^{-1}$, we also obtain that $h(\eta, (k)) \leq h_{\mu_{f^{-1}}}(f^{-1}) = h_{\mu_{f^{-1}}}(f)$. 
\end{proof}

\section{A dissipative Bernoulli shift with no finitary factors}
\label{example-section}

Recall that Kakutani's theorem \cite{MR23331} gives that two infinite direct product measures $\mu$ and $\nu$ on $A^{\Z}$ are either equivalent or mutually singular, and they are equivalent if and only if 
$$\sum_{n \in \Z} \sum_{a \in A}  (\sqrt{\mu_n(a)} -\sqrt{ \nu_n(a)})^2<\infty.$$

\begin{lemma}
\label{example no finitary}
Let $\mu=\bigotimes_{n\in\Z} \mu_i$ be the product measure on $\{0,1\}^\Z$ with marginals given by 
$$\mu_{i}(0)=\frac{10}{\sqrt{|i|+2}}=1-\mu_i(1).$$  The associated  Bernoulli  shift  is nonsingular and totally dissipative action of a non-atomic measure space. 
\end{lemma}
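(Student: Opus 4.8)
The plan is to check the three asserted properties separately. A small caveat first: the displayed value $\mu_i(0)=10/\sqrt{|i|+2}$ exceeds $1$ for the finitely many $i$ with $|i|\le 97$, and for those indices one should read $\mu_i$ as an arbitrary fully supported law on $\{0,1\}$ (say $\mu_i(0)=1/2$); this affects nothing below, since nonsingularity, total dissipativity and non-atomicity all depend only on the behaviour of $(\mu_i)$ as $|i|\to\infty$. Non-atomicity is immediate: for $|i|$ large we have $\mu_i(1)=1-10/\sqrt{|i|+2}>1/2>\mu_i(0)$, so for every $x\in\{0,1\}^{\Z}$ and every large $N$, $\mu(\{x\})=\prod_{i\in\Z}\mu_i(x_i)\le\prod_{|i|>N}\mu_i(1)=\prod_{|i|>N}\bigl(1-\tfrac{10}{\sqrt{|i|+2}}\bigr)=0$, the last equality because $\sum_{|i|>N}10/\sqrt{|i|+2}=\infty$.

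For nonsingularity I will invoke Kakutani's dichotomy recalled at the start of Section~\ref{example-section}: $S$ is nonsingular iff $\mu\sim\mu\circ S^{-1}=\bigotimes_{i}\mu_{i+1}$, which happens iff $\sum_{i\in\Z}\sum_{a\in\{0,1\}}\bigl(\sqrt{\mu_i(a)}-\sqrt{\mu_{i+1}(a)}\bigr)^2<\infty$. For the $a=0$ term one uses $\bigl|t^{-1/4}-(t+1)^{-1/4}\bigr|\le\tfrac14 t^{-5/4}$ to get $\bigl(\sqrt{\mu_i(0)}-\sqrt{\mu_{i+1}(0)}\bigr)^2=O(|i|^{-5/2})$, and for the $a=1$ term one uses $|\sqrt a-\sqrt b|\le|a-b|$ when $a,b\ge\tfrac12$ to get $\bigl(\sqrt{\mu_i(1)}-\sqrt{\mu_{i+1}(1)}\bigr)^2=O(|i|^{-3})$; both are summable, so $S$ is nonsingular. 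In particular $\mu\circ S^{-n}\sim\mu$ for every $n$, with Radon--Nikodym derivative $w_n(x):=\prod_{i\in\Z}\mu_{i+n}(x_i)/\mu_i(x_i)$.

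The substantive point is total dissipativity. By the Hopf decomposition, an invertible nonsingular transformation of a probability space is totally dissipative once $\sum_{n=1}^{\infty}\tfrac{d(\mu\circ S^{-n})}{d\mu}=\sum_{n=1}^{\infty}w_n<\infty$ a.e.\ \cite[\S1.6]{AaroBook}, and by Tonelli it suffices to show $\sum_{n\ge1}\int\sqrt{w_n}\,d\mu<\infty$ (then $\sum_n\sqrt{w_n}<\infty$ a.e., and since the summands tend to $0$ one has $w_n\le\sqrt{w_n}$ eventually). Now the Hellinger affinity factorizes over product measures, so $\int\sqrt{w_n}\,d\mu=\prod_{i\in\Z}\bigl(\sqrt{\mu_i(0)\mu_{i+n}(0)}+\sqrt{\mu_i(1)\mu_{i+n}(1)}\bigr)=\prod_{i\in\Z}\bigl(1-\tfrac12\Sigma_{i,n}\bigr)\le\exp\bigl(-\tfrac12\Sigma_n\bigr)$, where $\Sigma_{i,n}=\sum_a(\sqrt{\mu_i(a)}-\sqrt{\mu_{i+n}(a)})^2$ and $\Sigma_n=\sum_{i\in\Z}\Sigma_{i,n}$, using $1-t\le e^{-t}$. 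It remains to lower-bound $\Sigma_n$: keeping only the $a=0$ contributions of the indices $i$ with $\lfloor n/4\rfloor\le i\le\lfloor n/2\rfloor$, for such $i$ and $n$ large we have $\mu_i(0)=10/\sqrt{i+2}$, $\mu_{i+n}(0)=10/\sqrt{i+n+2}$ with ratio $\sqrt{\mu_i(0)}/\sqrt{\mu_{i+n}(0)}=((i+n+2)/(i+2))^{1/4}$ bounded away from $1$ (between $2^{1/4}$ and $2$), so $\sqrt{\mu_i(0)}-\sqrt{\mu_{i+n}(0)}\gtrsim\sqrt{\mu_{i+n}(0)}\gtrsim n^{-1/4}$; summing the squares over the $\asymp n$ such indices gives $\Sigma_n\gtrsim n\cdot n^{-1/2}=\sqrt n$. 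Hence $\int\sqrt{w_n}\,d\mu\le e^{-c\sqrt n}$ for some $c>0$ and all large $n$, and $\sum_{n\ge1}e^{-c\sqrt n}<\infty$, which finishes the proof.

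The main obstacle is this last bound $\Sigma_n\gtrsim\sqrt n$: one must locate a window of indices on which the original marginal $\mu_i$ and its shift $\mu_{i+n}$ are separated by a definite amount, and this forces considering $i$ of order $n$ --- where $\mu_i(0)\approx c\,n^{-1/2}$ changes by a constant factor over a length-$n$ shift --- rather than $i\ll n$ or $i\gg n$, where the marginals are too close together to help. This is exactly the place where the polynomial decay rate $|i|^{-1/2}$ of the biases is used; any decay $|i|^{-\gamma}$ with $0<\gamma<1$ would work just as well (giving $\Sigma_n\gtrsim n^{1-\gamma}$), and in fact finiteness of $\sum_n e^{-\Sigma_n/2}$ only requires $\Sigma_n\ge(2+\varepsilon)\log n$, so there is a lot of room. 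Everything else is routine bookkeeping with Kakutani's dichotomy and the elementary inequalities above.
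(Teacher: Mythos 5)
Your proof is correct and follows essentially the same route as the paper's: Kakutani's criterion for nonsingularity, divergence of $\sum_i\min(\mu_i(0),\mu_i(1))$ for non-atomicity, and for total dissipativity the product/Hellinger bound $\int\sqrt{\tfrac{d\mu\circ S^n}{d\mu}}\,d\mu\le e^{-\Sigma_n/2}$ followed by a lower bound on $\Sigma_n$ coming from a window of indices of order $n$ (the paper bounds the squared differences of the marginals over $k\in[n+1,2n]$ to get $\Sigma_n\gtrsim\log n$ with constant $>2$, while your Hellinger identity over $i\in[n/4,n/2]$ gives the stronger $\Sigma_n\gtrsim\sqrt n$; either suffices for summability). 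Two of your side remarks are worth noting: the observation that $\mu_i(0)=10/\sqrt{|i|+2}>1$ for $|i|\le 97$ is a genuine defect in the statement that must be patched exactly as you do, and your Tonelli-plus-Hopf derivation of the sufficient condition $\sum_n\int\sqrt{w_n}\,d\mu<\infty$ is a correct self-contained proof of the criterion the paper imports from \cite{KosKMaharam}.
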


The proof will require some calculations; in particular, to show that the shift is dissipative we will apply a sufficient condition from  \cite[Lemma 2.2]{KosKMaharam}, which requires verifying that 
\begin{equation}
\label{suff-veri}
\sum_{n\in\Z}\int \sqrt{\frac{d\mu\circ T^n}{d\mu}}d\mu<\infty.
\end{equation}

\begin{proof}[Proof of Lemma \ref{example no finitary}]
Since
\[
\sum_{n\in\Z}\min(\mu_n(0),\mu_n(1))=\sum_{n\in\Z}\frac{1}{\sqrt{1+|n|}}=\infty,
\]
it follows that the measure $\mu$ is non-atomic; see  \cite{MR537216}. The shift is nonsingular because
\begin{align*}
&\sum_{n\in\Z}\left(\left(\sqrt{\mu_n(0)}-\sqrt{\mu_{n-1}(0)}\right)^2+\left(\sqrt{\mu_n(1)}-\sqrt{\mu_{n-1}(1)}\right)^2\right)\\
&\leq 20\sum_{n\in\Z}\left(\frac{\sqrt[4]{|n|+2}-\sqrt[4]{|n-1|+2}}{(\sqrt[4]{|n|+2})\cdot(\sqrt[4]{|n-1|+2})}\right)^2<\infty.
\end{align*}

It remains to show that the shift is dissipative. By Kakutani's theorem, for all $n\in\Z^{+}$, for $\mu$ almost every $x\in \{0,1\}^\Z$, we have
\[
\frac{d\mu\circ T^n}{d\mu}(x)=\prod_{k\in\Z}\frac{\mu_{k-n}(x_k)}{\mu_k(x_k)}.
\]
As in \cite{VaesWahl}, since for all $0<a,b<1$, we have
 $$\sqrt{ab}+\sqrt{(1-a)(1-b)}\leq 1-\frac{(b-a)^2}{2},$$
and as $\mu$ is a product measure, for all $n\in\Z^{+}$,  we have
\begin{align*}
\int \sqrt{\frac{d\mu\circ T^n}{d\mu}}d\mu&=\prod_{k\in\Z}\int \sqrt{\frac{\mu_{k-n}(s)}{\mu_k(s)}}d\mu_k(s) \\
&=\prod_{k\in\Z}\left(\sqrt{\mu_k(0)\mu_{k-n}(0)}+\sqrt{\mu_k(1)\mu_{k-n}(1)}\right)\\
&\leq \prod_{k\in\Z}\left(1-\frac{(\mu_k(0)-\mu_{k-n}(0))^2}{2}\right)\\
&\leq \exp(-\tfrac{1}{2}\sum_{k\in\Z}\left(\mu_k(0)-\mu_{k-n}(0)\right)^2).
\end{align*}
For all sufficiently large  $n\in\Z^{+}$ and $0\leq k\leq n$,  we have
\[
\frac{n}{(\sqrt{2+k+n})(\sqrt{2+k+n}+\sqrt{2+k})}\geq \frac{1}{5}. 
	\]
Consequently,
\begin{align*}
\sum_{k\in\Z}\big(\mu_k(0) -&  \mu_{k-n}(0)\big)^2  \geq \sum_{k=n+1}^{2n}\left(\mu_k(0)-\mu_{k-n}(0)\right)^2\\
&=\sum_{k=0}^{n}\left(\frac{10}{\sqrt{k+2}}-\frac{10}{\sqrt{2+k+n}}\right)^2\\
&=\sum_{k=1}^{n}\frac{1}{2+k}\left(\frac{10n}{(\sqrt{2+k+n})(\sqrt{2+k+n}+\sqrt{2+k})}\right)^2\\
&\geq \sum_{k=1}^n\frac{4}{2+k}= 4(1+o(1))\log(1+n).
\end{align*}
Hence we have for all $n$ sufficiently large,  
\[
\int \sqrt{\frac{d\mu\circ T^n}{d\mu}}d\mu\leq \exp\left(-\tfrac{3}{2}\log(n+1)\right)=\frac{1}{(n+1)^{\tfrac{3}{2}}},
\]
a summable $p$-series.
\end{proof}

\begin{proof}[Proof of Proposition \ref{example-no}]
Immediate from Theorems \ref{thm: dissipative factors} and  \ref{bound-Kalikow}, and Lemma \ref{example no finitary}.
 \end{proof}

\section*{Acknowledgements}

Terry Soo thanks Anthony Quas for teaching him Kalikow's argument that the 3-shift is not a factor of the 2-shift, over dinner, when he was his postdoctoral fellow in Victoria.   We also thank Yuri Lima,  Benjamin Weiss, Omri Sarig, and Jon Aaronson  for  many enlightening discussions, and Ronnie Kosloff for explaining the relation of Theorem \ref{thm: dissipative factors} to Prigogine's work.

\bibliographystyle{abbrv}
\bibliography{embedding}
\end{document}